\newtheoremstyle{theoremstyle}% Name des Stils
{\topsep}% vertikaler Abstand zum vorangehenden Text
{\topsep}% vertikaler Abstand zum folgenden Text
{\itshape}% Schriftart des Textk�rpers
{-0mm}% Abstand des Erstzeileneinzugs der Kopfzeile
{\bfseries}% Schriftart des Kopfes
{.}% Punktierung nach dem Kopf
{0.5em}%{\newline}% Abstand nach dem Kopf (z.B. \newline)
{}% Kopfspezifikation (leer bedeutet 'normal')
\theoremstyle{theoremstyle}
\newtheorem{theorem}{Theorem}
\newtheorem{lemma}[theorem]{Lemma}
\newtheorem{corollary}[theorem]{Corollary}
\newtheorem{proposition}[theorem]{Proposition}
\newtheorem*{theorem*}{Theorem}
\newtheorem*{lemma*}{Lemma}
\newtheorem*{corollary*}{Corollary}
\newtheorem*{proposition*}{Proposition}
\newtheorem*{defandprop*}{Definition and Proposition}
\theoremstyle{theoremstyle}
\newtheorem*{definition*}{Definition}
\newtheorem*{notation*}{Notation}
\newtheoremstyle{proofstyle}% Name des Stils
{0pt}% vertikaler Abstand zum vorangehenden Text
{5pt}% vertikaler Abstand zum folgenden Text
{}% Schriftart des Textk�rpers
{-0mm}% Abstand des Erstzeileneinzugs der Kopfzeile
\theoremstyle{proofstyle}
\newtheoremstyle{remarkstyle}% Name des Stils
{0pt}% vertikaler Abstand zum vorangehenden Text
{5pt}% vertikaler Abstand zum folgenden Text
{}% Schriftart des Textk�rpers
{}% Abstand des Erstzeileneinzugs der Kopfzeile
{\sffamily}% Schriftart des Kopfes
{.}% Punktierung nach dem Kopf
{0.5em}% Abstand nach dem Kopf (z.B. \newline)
{}% Kopfspezifikation (leer bedeutet 'normal')
\theoremstyle{remarkstyle}
\newtheorem*{remark}{Remark}
\newcommand{\N}{{\mathbb N}}
\newcommand{\Z}{{\mathbb Z}}
\newcommand{\R}{{\mathbb R}}
\renewcommand{\P}{{\mathbb P}}
\newcommand{\E}{{\mathbb E}}
\begin{document}
\title{Time correlations for the parabolic Anderson model\renewcommand{\thefootnote}{\arabic{footnote}}} 

\author{\renewcommand{\thefootnote}{\arabic{footnote}}
{\sc J\"urgen G\"artner}
\footnotemark[1]
\\
\renewcommand{\thefootnote}{\arabic{footnote}}
{\sc Adrian Schnitzler}
\footnotemark[1]
}
\footnotetext[1]{
Institut f\"ur Mathematik, Technische Universit\"at Berlin,
Stra{\ss}e des 17.\ Juni 136, 10623 Berlin, Germany,
{\sl jg@math.tu-berlin.de}, 
{\sl schnitzler@math.tu-berlin.de}
}
\footnotetext[2]{
The work was supported by the DFG International Research Training Group {\it Stochastic Models of Complex Processes
}}

\date{\today}
\maketitle

\begin{abstract}
We derive exact asymptotics of time correlation functions for the parabolic Anderson model with homogeneous initial condition and time-independent tails that decay more slowly than those of a double exponential distribution and have a finite cumulant generating function. We use these results to give precise asymptotics for statistical moments of positive order. Furthermore, we show what the potential peaks that contribute to the intermittency picture look like and how they are distributed in space. We also investigate for how long intermittency peaks remain relevant in terms of ageing properties of the model. 

\vskip 1truecm

\noindent
{\it AMS 2010 Subject Classification.} Primary 60K37, 82C44; Secondary 60H25.\\

\noindent
{\it Key words and phrases.} Parabolic Anderson model, Anderson Hamiltonian, random potential, time correlations, annealed asymptotics, intermittency, ageing.\\

\end{abstract}

%\newpage
\section{Introduction}
\subsection{The parabolic Anderson model}
The parabolic Anderson model (PAM) is the heat equation on the lattice with a random potential, given by
\begin{eqnarray}\label{AWPe}
\begin{cases}
\frac{\partial}{\partial t}u(t,x) = \kappa \Delta u(t,x)+ \xi(x) u(t,x),
 \qquad &(t,x)\in (0,\infty)\times\mathbb{Z}^{d},\\ 
 u(0,x)=u_{0}(x), \qquad &x\in \mathbb{Z}^{d},
\end{cases}
\end{eqnarray}
where $\kappa>0$\index{$\kappa$} denotes a diffusion constant, $u_{0}$ a nonnegative function, and $\Delta$ the discrete Laplacian, defined by	
	\[
\Delta f(x) := \sum\limits_{\substack{y\in\mathbb{Z}^{d}:\\|x-y|_1=1}}\left[f(y)-f(x)\right],\qquad x\in \mathbb{Z}^{d},\, f\colon\mathbb{Z}^{d}\rightarrow \mathbb{R}.
\]
Furthermore, $\xi:=\left\{\xi(x),x\in\Z^d\right\}$
 is a random potential.
The solution to \eqref{AWPe} admits the following Feynman-Kac representation (see \cite[Theorem 2.1]{GM90}),
\[u(t,x)=\mathbb{E}_x\exp\Bigg\{\int\limits_0^t \xi\left(X_s\right)\,{\rm d}s\Bigg\}u_0\left(X_t\right),\qquad(t,x)\in [0,\infty)\times\mathbb{Z}^{d},\]
where $X$ is a simple, symmetric, continuous time random walk with generator $\kappa\Delta$ and $\mathbb{P}_x$ ($\mathbb{E}_x$) denotes the corresponding probability measure (expectation) if $X_0=x$ a.s.
  
The solution $u$ depends on two effects. On the one hand, the Laplacian tends to make it flat, whereas the potential causes the occurrence of small regions where almost all mass of the system is located. The latter effect is called {\em intermittency}. It turns out that, the more heavy tailed the potential tails are, the more dominant it becomes. These regions are often referred to as intermittency islands, and the solution $u(t,\cdot)$ develops high peaks on these islands. Commonly the almost sure behaviour of $u$ is referred to as ``quenched'', whereas the behaviour after averaging over the potential $\xi$ is called ``annealed''.

In this paper we will restrict to the case that we have the homogeneous initial condition $u_0\equiv 1$, and that the potential is i.i.d. In this form the PAM was introduced in~\cite{GM90} where existence and uniqueness of the solution have been investigated as well as first order asymptotics for the statistical moments and for the almost sure behaviour of the solution. An overview of the rich literature and recent results on the PAM can be found in~\cite{GK05}. Applications of the PAM are summarised for instance in \cite{M94}.\\
In Section~\ref{sec-results} we formulate our main results.

\subsection{Main results}\label{sec-results}

In this paper we deal with potential tails that decay more slowly than those of a double exponentially (Gumbel) distributed variable $X$, e.g. $\P(X>r)=\exp\{-{\rm e}^{r}\}$ but still have a finite cumulant generating function. Examples that satisfy all conditions that we impose later include the Weibull distribution, i.e., $\P(X>h)=\exp\{-h^\gamma\}$ for $\gamma\in(1,\infty)$. Hence, we are in the first universality class in the classification of~\cite{HKM06}. This class was studied in \cite{GM98}, where some little evidence was gained that the main contribution to the moments of the solution comes from delta-like peaks in the $\xi$-landscape, which are far away from each other. Among other results, they derived the first two terms of the logarithmic asymptotics for the moments of the total mass of the solution. One main result of the present paper, see Section~\ref{sec-momasy}, are the exact asymptotics of these moments. Furthermore, we give a generalisation to more complex functions of the solution evaluated at different times, see Section~\ref{sec-time correlations}.

Another main result, see Section~\ref{sec-intermitt}, describes the height of the intermittency peaks that determine the annealed behaviour. Furthermore, we prove that the complement of the intermittency islands is indeed negligible with respect to the peaks. Since we consider the homogeneous initial condition $u_0\equiv1$, we will investigate the solution in extremely large boxes in which many of these peaks contribute.

Another aspect that we study in this paper are ageing properties of the model. To this end, we compare two notions of ageing, one in terms of time correlations and one in terms of stability of intermittency peaks. In particular, we analyse mixed moments of the solution on two time scales.

Let us formulate more precisely our main assumptions and introduce some notation. By $\left<.\right>$ we denote expectation with respect to $\xi$. The corresponding probability measure is denoted by $\mathbf{P}$. Let $\bar{F}(h):=\mathbf{P}\left(\xi(0)> h \right)$ denote the tail of $\xi(0)$ and $\varphi:=-\log \bar{F}$. Furthermore, let $H(t):=\log\left<{\rm e}^{t\xi(0)}\right>$ be the cumulant generating function of $\xi(0)$. We will make the following assumption on the tails of $\xi$:

\medskip
\underline{Assumption (F)}: 
\begin{enumerate}
 \item If $x\neq y$, then for all $c>0$,
\[\mathbf{P}\left(\frac{\xi(x)+\xi(y)}{2}>h-c\right)=o\left(\bar{F}(h)\right),\qquad h\rightarrow\infty.\]
\item $H(t)<\infty$ \text{  for all } $t\geq0$.
\end{enumerate}
\medskip

Item ii) is equivalent to the existence of moments of the solution of all orders, see \cite{GM90}.
Item i) means that it is much more likely to have one very high peak than to have two quite high peaks. Under Assumption (F) we know that $\lim_{t\rightarrow\infty}H(t)/t=\infty$, i.e., the potential is unbounded to infinity.

To keep the proofs as simple as possible we assume that $\xi$ is bounded from below although analogous results hold true if the potential is unbounded from below. This allows us to assume without loss of generality that
$\text{essinf } \xi=0.$
If $\text{essinf } \xi=c$, we can use the transformation $u\mapsto {\rm e}^{c t}u$ which shifts $\text{essinf } \xi$ to the origin.

\subsubsection{Time correlations}\label{sec-time correlations}
Theorem~\ref{main} provides us with a formula how to compute asymptotically the time correlations for regularly varying functions of the solution $u$. It is also the main proof tool for all further applications.
Spatial correlations for potentials with double exponential or heavier tails  can be found in~\cite{GdH99}, whereas time correlations have not been investigated so far.
Let $ Q_{R}:=[-\lceil R\rceil,\lceil R\rceil]^d\cap{\mathbb Z}^d$ be the $d$-dimensional centered lattice cube of radius $\lceil R\rceil\geq1$ and let
\[\widehat{\xi}_R:=\max\limits_{x\in Q_R\setminus\{0\}}\xi(x).\]
We impose free and zero boundary conditions on the boundary of $Q_R$, denoted by $*={\rm f}$ and $*=0$, respectively. The corresponding Laplacians are denoted by $\Delta_R^*$, that is, for $f\colon Q_R\to\R$,
$$
\Delta_R^{\rm f} f(x)=\sum_{y\in Q_R\colon y\sim x}(f(y)-f(x)),\qquad
\Delta_R^0f(x)=\sum_{y\in \Z^d\colon y\sim x}(f(y)-f(x)),
$$
where for $\Delta_R^0f$ we extend $f$ trivially to $\Z^d$ with the value zero. Zero boundary conditions correspond to $\xi(x)=-\infty$ for $x\notin Q_{R}$. Its law and expectation will be denoted by $\P^{R,0}_x$ and $\E^{R,0}_x$, respectively. The random walk generated by $\Delta_R^{\rm f}$ just remains at its current site at the boundary when the random walk generated by $\Delta$ would jump out of $Q_R$. Its law and expectation will be denoted by $\P^{R,{\rm f}}_x$ and $\E^{R,{\rm f}}_x$, respectively. The corresponding Dirichlet form is given by
\[\left(-\Delta_R^{\rm f} u,u\right)_{ Q_{R}}=\sum\limits_{\substack{\left\{x,y\right\}\in Q_{R}\colon \\|x-y|_1=1}}\big(u(x)-u(y)\big)^2.\]
Let $\lambda^{R,*}_1=\lambda^{R,*}_1(\xi)$ be the principal (i.e., largest) eigenvalue of the Anderson Hamiltonian $\mathscr{H}_R^*:=\kappa\Delta_R^*+\xi$ on $\ell^2\left( Q_{R}\right)$ with free and zero boundary condition, respectively. 

Recall that regularly varying functions are those positive functions $f$ that can be written as $x^\gamma L(x)$, where $\gamma\in\R$ is called the index of variation and $L$ is a slowly varying function called slowly varying part of $f$. 

Let $\mathcal{R},\mathcal{R}_\gamma$ and $\mathcal{R}_+$ be the set of regularly varying functions, regularly varying functions with index of variation $\gamma$, and regularly varying functions with positive index of variation, respectively, with non-decreasing or bounded away from zero and infinity, regularly varying part. Let
\[\mathcal{F}:=\left\{f\in C^1\colon f\in\mathcal{R}_+, f'(x)>0\,\forall x>0, f(0)=0, \lim\limits_{t\rightarrow\infty}f(t)=\infty\right\}\]
and
\[\mathcal{T}:=\left\{f\in C^1\colon f'(x)>0\,\forall x>0, f(0)=0, \lim\limits_{t\rightarrow\infty}f(t)=\infty\right\}.\]

\begin{remark} 
 The fact $f\in\mathcal{R}_+$ already implies that $\lim\limits_{t\rightarrow\infty}f(t)=\infty$, see \cite[Proposition 1.5.1]{BGT87}.
\end{remark}

\begin{theorem}[Time correlations]\label{main}
 Let Assumption (F) be satisfied. Furthermore, let $f_1,\dots,f_p\in\mathcal{F}$ and $t_1,\dots,t_p\in\mathcal{T}$ be given such that for all $a\geq0$,
 \begin{equation}\label{wachs}
  \max\limits_{1\leq j\leq p}{\rm e}^{t_j(t)a}=o\left(\min\limits_{1\leq i\leq p}\left<f_i\left({\rm e}^{t_i(t)\xi(0)}\right)\right>\right),\qquad\text{as }\,t\rightarrow\infty.
 \end{equation}
 Then for every $R\geq1$ and $0<\underline{C}<1<\overline{C}<\infty$ we find that for all $c,t$ large enough,
\begin{eqnarray*} &&\underline{C}\int\limits_{c}^{\infty}\bigg[\frac{{\rm d}}{{\rm d}h}\prod\limits_{i=1}^p f_i\left({\rm e}^{t_i(t)h}\right)\bigg]\mathbf{P}\left(\lambda^{R,0}_{1}(\xi)>h\Big|\widehat{\xi}_R\leq h-c\right)\,{\rm d}h\\
&\leq&\bigg<\prod_{i=1}^p f_i\big(u\left(t_i(t),0\right)\big)\bigg>\\
&\leq&\overline{C}\int\limits_{c}^{\infty}\bigg[\frac{{\rm d}}{{\rm d}h}\prod\limits_{i=1}^p f_i\left({\rm e}^{t_i(t)h}\right)\bigg]\mathbf{P}\left(\lambda^{R,{\rm f}}_{1}(\xi)>h\Big|\widehat{\xi}_R\leq h-c\right)\,{\rm d}h.
\end{eqnarray*}
\end{theorem}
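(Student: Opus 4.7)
Set $G(h) := \prod_{i=1}^p f_i(e^{t_i(t) h})$, which is smooth, positive, and strictly increasing on $(0,\infty)$. The guiding heuristic is that under Assumption (F) the dominant contribution to $u(s,0)$ for large $s$ comes from realisations in which $\xi$ exhibits a single isolated high peak in a neighbourhood of the origin, whose height --- after a shift of $O(\kappa)$ from the discrete Laplacian --- is encoded by $\lambda_1^{R,*}$. Accordingly, I aim to show that both sides of the claimed double inequality are comparable to $\langle G(\lambda_1^{R,*})\rangle$ up to factors that can be made arbitrarily close to $1$.

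\textbf{Layer-cake.} Applying the fundamental theorem of calculus to $G$ on $[c,\lambda_1^{R,*}(\xi)]$ and then taking $\langle\,\cdot\,\mathbf{1}\{\widehat{\xi}_R\leq \lambda_1^{R,*}-c\}\rangle$, together with $\mathbf{P}(\widehat{\xi}_R \leq h-c) \to 1$ as $h\to\infty$, shows that the right-hand integral in the theorem equals $\bigl\langle G(\lambda_1^{R,*})\,\mathbf{1}\{\lambda_1^{R,*}>c,\;\widehat{\xi}_R\leq \lambda_1^{R,*}-c\}\bigr\rangle$ up to a multiplicative factor tending to $1$ as $c\to\infty$. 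It therefore suffices to sandwich $\langle\prod_i f_i(u(t_i(t),0))\rangle$ between (slightly less than) $\langle G(\lambda_1^{R,0})\rangle$ and (slightly more than) $\langle G(\lambda_1^{R,\mathrm f})\rangle$ on the dominant-peak event.

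\textbf{Spectral sandwich on $u$.} The plan is to establish, on the event $\{\widehat{\xi}_R\leq\lambda_1^{R,*}-c\}$, bounds of the form
\begin{equation*}
c_R(\xi)\,e^{s\lambda_1^{R,0}(\xi)} \;\leq\; u(s,0) \;\leq\; C_R(\xi)\,e^{s\lambda_1^{R,\mathrm f}(\xi)},\qquad s\geq s_0(\xi),
\end{equation*}
where $c_R$ is sub-exponential in $s$ and $C_R$ is sub-exponential in the annealed sense after integrating out $\xi|_{Q_R^c}$. The lower bound follows from $u(s,0)\geq u^{R,0}(s,0)$ and the Perron--Frobenius expansion of $u^{R,0}$ in the eigenbasis of $\mathscr{H}_R^0$, noting that the principal eigenfunction is strongly localised at the peak so that $\phi(0)\geq 1-o(1)$. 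The upper bound is obtained by splitting the Feynman--Kac path at its first exit from $Q_R$: the non-exit piece is at most $|Q_R|\,e^{s\lambda_1^{R,\mathrm f}}$ by Cauchy--Schwarz on the spectral expansion of $u^{R,\mathrm f}$, while excursions are integrated out by conditioning on $\xi|_{Q_R}$ and using independence together with Assumption (F)(ii); the growth condition \eqref{wachs} ensures that the resulting annealed contribution is of strictly smaller exponential order than $e^{s\lambda_1^{R,\mathrm f}}$.

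\textbf{Closing and main obstacle.} Since each $f_i\in\mathcal{R}_+$ is regularly varying with positive index, composing $f_i$ with the spectral sandwich yields a multiplicative error $1\pm o(1)$ as $s\to\infty$; after multiplying the $p$ factors and taking expectation, this error can be absorbed into any prescribed $\underline{C}<1<\overline{C}$ once $t$ is sufficiently large. The exceptional event $\{\widehat{\xi}_R > \lambda_1^{R,*}-c\}$ is rendered negligible by Assumption (F)(i), which says that two high values in $Q_R$ are far less probable than one. Combining these pieces with the layer-cake step yields the claim. The technically heaviest ingredient is the excursion bound in the upper sandwich, since paths leaving $Q_R$ may sample arbitrarily large $\xi$-values in $Q_R^c$; here the growth condition \eqref{wachs}, together with Assumption (F)(ii), is decisive in making the annealed excursion contribution strictly subdominant to the single-peak term.
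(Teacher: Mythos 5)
The overall template is right---reduce the whole-space solution to a principal-eigenvalue computation in a fixed box conditioned on an isolated peak at the origin, then integrate against the tail of $\lambda_1^{R,*}$---but the key technical step, your ``spectral sandwich,'' has a genuine gap in the upper bound that I do not see how to repair within the structure you propose.

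You propose to split the Feynman--Kac path at its first exit from the \emph{fixed} box $Q_R$ and claim that, after averaging over $\xi|_{Q_R^{\rm c}}$, ``the resulting annealed contribution is of strictly smaller exponential order than $e^{s\lambda_1^{R,\rm f}}$.'' This is false pointwise in $\xi|_{Q_R}$. For a fixed $R$ the walk leaves $Q_R$ with probability tending to one, and a path that exits quickly and then sits at a single site $y\notin Q_R$ for the remaining time contributes, after integrating out $\xi(y)$, a factor of order $e^{H(t)-2d\kappa t}$, which tends to infinity. On the other hand, for a typical (low) configuration $\xi|_{Q_R}$ one has $\lambda_1^{R,\rm f}=O(1)$, so $e^{t\lambda_1^{R,\rm f}}$ stays bounded. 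Thus the annealed excursion term dominates the single-box term on the bulk of the $\xi|_{Q_R}$-space; the desired comparison holds only on the rare event where $\lambda_1^{R,\rm f}$ is of order $h_t\sim H'(t)$. Making that quantitative requires integrating over $\xi|_{Q_R}$ before discarding anything, and the growth condition \eqref{wachs} together with Assumption (F)(ii) does not, by itself, give you the pointwise domination you invoke.

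The paper's proof avoids this by first cutting to a \emph{time-dependent} box $Q_{R_t}$ with $R_t=t\log^2 t$ (Proposition~\ref{cut}): here the probability that the walk reaches $\partial Q_{R_t}$ by time $t$ decays super-exponentially, like $\exp\{-R_t\log(R_t/\kappa d t)\}$, which is what actually kills the excursion term in the annealed average. Only afterwards, and only on the conditioning event $\{\xi^{(1)}_{R_t}=\xi(0),\ \xi^{(1)}_{R_t}-\xi^{(2)}_{R_t}>c\}$ isolated in Lemma~\ref{asympu}, does the paper descend to a fixed $R$ via the eigenvalue bracketing $\lambda_1^{R,0}\le\lambda_1^{R_t,*}\le\max(\lambda_1^{R,\rm f},\lambda_1^{R_t\setminus R,\rm f})$ from \eqref{kirsch} and the exponential localisation of $e_1^{R,*}$ (Lemma~\ref{eigenexp}), giving Proposition~\ref{lowup}. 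Your one-step reduction skips precisely this intermediate cut, and that is the step that cannot be skipped. Your lower bound (via $u\geq u^{R,0}$ and Perron--Frobenius) and your ``layer-cake'' interpretation of the final integral are consistent in spirit with Lemmas~\ref{tot} and~\ref{tod}, modulo some care with the $h$-dependent conditioning event inside the integral, but the upper-bound gap is the decisive one.
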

\noindent
Condition \eqref{wachs} determines of what order the functions $t_i$ can be chosen. It is always possible to choose $\max t_i=a\cdot \min t_i$, $a>0$.

Note that Assumption (F) is given in terms of the distribution of the potential, while the asymptotics themselves are expressed in terms of the conditional distribution of the eigenvalues. The asymptotics may be understood as follows. A Fourier expansion in terms of the eigenvalues of $\mathscr{H}_R^*$ yields that
\begin{equation}\label{eigenvExpansion}
u(t,\cdot)\approx {\rm e}^{t\lambda^{R,*}_1(\xi)}\big( e_1^{R,*},{\mathbbm 1}\big) e_1^{R,*}(\cdot),
\end{equation}
where $e_1^{R,*}$ is the positive $\ell^2$-normalised principal eigenfunction. Under Assumption (F), it turns out that the eigenfunction $e_1^{R,*}$ is extremely delta-like peaked. Due to the the requirement $\widehat{\xi}_R\leq h-c$, the peak centre lies in the origin since $\xi(0)$ and $\lambda^{R,*}_1(\xi)$ differ by at most $2d\kappa$.

\subsubsection{Exact moment asymptotics}\label{sec-momasy}

Our first application of Theorem~\ref{main} are exact asymptotics for all moments of positive order. The second order asymptotics for integer moments for a large class of potentials, including the ones that satisfy Assumption (F), can be found in~\cite{GM98}: For any $p\in{\mathbb N}$,
$$
\langle u(t,0)^p\rangle={\rm e}^{H(pt)-2d\kappa pt}\,{\rm e}^{o(t)},\qquad t\to\infty.
$$
We now present much finer asymptotics which are even up to asymptotic equivalence. To the best of our knowledge, this precision has not yet been achieved for the PAM. \\
We need the tails of the principal eigenvalue, conditional on having an extremely high peak at the origin: 
\[{\varphi}_R^*(h):=-\log\mathbf{P}\left(\lambda^{R,*}_1(\xi)>h\,\Big|\,\widehat{\xi}_R\leq h^\alpha\right).\]
Here $\alpha$ is picked according to the following condition which is slightly stronger than Assumption (F).

\medskip

\underline{Assumption (F*)}:
\begin{enumerate}
 \item $\exists\,\alpha<1\colon\quad\bar{F}(h)\cdot\bar{F}(h^\alpha)=o\left(\bar{F}(h+2d\kappa)\right),\qquad h\rightarrow\infty$.
\item $H(t)<\infty$ \text{  for all } $t\geq0$.
\end{enumerate}

\medskip
\noindent
Let $h_t$ be a solution to
\[\sup\limits_{h\in(0,\infty)}\left(th-\varphi(h)\right)=th_t-\varphi\left(h_t\right)=:\psi(t).\]
If $\varphi$ is ultimately convex, then $h_t$ is unique for any large $t$.\\
Now we introduce a condition on the function $\varphi(h)=-\log\P(\xi(0)>h)$. A function $f(t)=o(t)$ is called {\it self-neglecting} if 
\begin{equation}\label{locunifconv}
f\big(t+af(t)\big)\sim f(t),\qquad t\rightarrow\infty,
\end{equation}
locally uniformly in $a\in(0,\infty)$. The convergence in \eqref{locunifconv} is already locally uniform in $a$ if $f$ is continuous (see for instance \cite[Theorem 2.11.1]{BGT87}).\\
Let ${h}^{R,*}_t$ be a solution to
\[\sup\limits_{h\in(0,\infty)}\left(th- \varphi_R^*(h)\right)=t{h}^{R,*}_t-{\varphi}_R^*\left({h}^{R,*}_t\right)=:\psi_R^*(t).\]
If $\varphi$ is ultimately convex, then $h_t^{R,*}$ is unique for any large $t$.

\medskip

\underline{Condition (B)}: The map $t\mapsto \sqrt{\varphi''\left(h_t\right)}$ is self-neglecting.

\medskip
\noindent
Again Condition (B) and Assumption (F*) concern $\xi$ and not $\lambda_1^R$.

\begin{theorem}[Moment asymptotics]\label{exmomw}
Let $\varphi\in C^2$ be ultimately convex, Assumption (F*) and Condition (B) be satisfied and $p\in(0,\infty)$. Then, for any sufficiently large $R$,
\[\left<u(t,0)^p\right>\sim \exp\left\{pt{h}^{R,*}_{pt}-{\varphi}_R^*({h}_{pt}^{R,*})+\log pt +\frac{1}{2}\log \frac{\pi}{({\varphi}_R^*)''({h}_{pt}^{R,*})}\right\},\qquad t\rightarrow\infty.\]
\end{theorem}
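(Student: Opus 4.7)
The plan is to invoke Theorem~\ref{main} with a single time, the function $f_1(x)=x^p$ and $t_1(t)=t$. Condition~\eqref{wachs} reduces to ${\rm e}^{ta}=o({\rm e}^{H(pt)})$, which holds because Assumption~(F) forces $H(t)/t\to\infty$. With $\frac{{\rm d}}{{\rm d}h}{\rm e}^{pth}=pt\,{\rm e}^{pth}$, Theorem~\ref{main} sandwiches $\bigl<u(t,0)^p\bigr>$ between two integrals of the form
\[
 pt\int_c^\infty {\rm e}^{pth}\,\mathbf{P}\bigl(\lambda_1^{R,*}(\xi)>h\,\big|\,\widehat{\xi}_R\leq h-c\bigr)\,{\rm d}h,
\]
one with free and one with zero boundary conditions, which differ only by arbitrarily-close-to-one multiplicative constants. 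Taking $R$ large will eventually allow me to collapse the sandwich; the resulting integral will then be evaluated by Laplace's method around the saddle point $h_{pt}^{R,*}$.

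The first step is to upgrade the conditioning event $\{\widehat{\xi}_R\leq h-c\}$ to the stronger $\{\widehat{\xi}_R\leq h^\alpha\}$ appearing in the definition of $\varphi_R^*$. Assumption~(F*)i) is the key input here: heuristically, $\lambda_1^{R,*}>h$ forces $\xi(0)>h-2d\kappa$, and the extra probability of in addition seeing a second point of $Q_R$ with $\xi$-value in $(h^\alpha,h-c)$ is controlled by $|Q_R|\,\bar F(h)\bar F(h^\alpha)/\bar F(h+2d\kappa)=o(1)$ times the dominant contribution. After this replacement the integrand becomes exactly $pt\,{\rm e}^{pth-\varphi_R^*(h)}$.

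Next I would apply Laplace's method to $\int pt\,{\rm e}^{pth-\varphi_R^*(h)}\,{\rm d}h$. Ultimate convexity of $\varphi$ transfers to $\varphi_R^*$, so the maximiser $h_{pt}^{R,*}$ is unique for large $t$ and determined by $(\varphi_R^*)'(h_{pt}^{R,*})=pt$; a second-order Taylor expansion of $\varphi_R^*$ around it reduces the integral to a Gaussian of width of order $1/\sqrt{(\varphi_R^*)''(h_{pt}^{R,*})}$, producing the stated factors $\log pt$ and $\tfrac12\log(\pi/(\varphi_R^*)''(h_{pt}^{R,*}))$ upon taking logarithms. To justify this reduction one needs $(\varphi_R^*)''$ to vary slowly on that scale, which is exactly what Condition~(B) provides: the self-neglecting property of $\sqrt{\varphi''(h_t)}$ transfers to $\varphi_R^*$ via the correspondence $\varphi_R^*(h)\approx\varphi(h-2d\kappa)$ that follows from the delta-like localisation of the principal eigenfunction under the conditioning $\widehat{\xi}_R\leq h^\alpha$.

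The main obstacle will be the boundary comparison: Theorem~\ref{main} only produces sandwich constants $\underline{C}<1<\overline{C}$, so to reach a genuine $\sim$ I will need to show that the zero and free boundary versions of $\varphi_R^*$ agree to leading order, uniformly on the scale of the Gaussian window around $h_{pt}^{R,*}$, once $R$ is chosen sufficiently large. This requires quantifying the localisation of $e_1^{R,*}$ near the origin precisely enough to make the two boundary conditions indistinguishable at the level of asymptotic equivalence; once that is done, together with the transfer of Condition~(B) from $\varphi$ to $\varphi_R^*$, the remainder of the argument is standard Laplace bookkeeping.
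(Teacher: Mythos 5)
Your proposal matches the paper's strategy: the paper proves this by combining Theorem~\ref{main} (taking a single time, $f_1(x)=x^p$, $t_1(t)=t$) with the strong Tauberian Theorem~\ref{strongt} of Feigin--Yashchin type, and the supporting Lemmas~\ref{abal} and \ref{probequi} carry out precisely the two steps you flag as the non-trivial ones --- upgrading the conditioning event $\{\widehat{\xi}_R\leq h-c\}$ to $\{\widehat{\xi}_R\leq h^\alpha\}$ via Assumption~(F*)i), and showing that the free- and zero-boundary conditional tails of $\lambda_1^{R,*}$ are asymptotically equivalent once $R$ is large, so the sandwich from Theorem~\ref{main} collapses. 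Your description of the Laplace evaluation and of the role of Condition~(B) also agrees with the paper's use of Lemma~\ref{ableit} to transfer the self-neglecting property from $\varphi$ to $\varphi_R^*$.
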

\noindent
We see from \eqref{eigenvExpansion} and Theorem~\ref{main} that Theorem~\ref{exmomw} basically follows from an application of the Laplace method.

Note that Weibull tails with parameter $\gamma>1$ satisfy both Condition (F*) and Condition (B). For $\gamma\in(1,3)$, we give an explicit identification of all terms of the asymptotics, see Corollary~\ref{exweib}.

\subsubsection{Relevant potential peaks and intermittency}\label{sec-intermitt}

While originally intermittency was studied by comparing the asymptotics of successive moments of $u$, there have recently been efforts to describe intermittency in a more geometric way by determining time-dependent random sets in $\mathbb{Z}^d$ in which the solution is asymptotically concentrated. These sets are closely related to the support of the leading eigenfunctions of the Anderson Hamiltonian. Clearly, the quenched intermittency picture differs from the annealed one. The height of the quenched intermittency peaks is basically determined by the almost sure growth of the maximal potential peak in a time-dependent box. Its radius depends on the distance that the random walk in the Feynman-Kac representation can make by time $t$. In~\cite{GKM07} the authors describe the geometry of the quenched intermittency peaks for the localised initial condition $u_0=\delta_0$. They find that size and shape of the islands are deterministic, whereas number and location are random. They also give rough bounds on the number and location. They show that under Assumption (F) the quenched intermittency peaks consist of single lattice points.

In contrast, the annealed peaks are significantly higher and occur less frequently.
Their geometry has not been investigated so far. Theorem~\ref{relpo} below determines the height of those potential peaks that contribute to the annealed intermittency peaks, and it proves that the complement contributes a negligible amount. It turns out that the peaks consist of single lattice points as well. 

We will assume from now on that the box $Q_{L_t}$ is chosen so large that the following weak law of large numbers holds true, see \cite[Theorem 1]{BAMR07}:
\begin{equation}\label{WLLN}
 \frac{1}{|Q_{L_t}|}\sum\limits_{x\in Q_{L_t}}u(t,x)\sim\left<u(t,0)\right>,\qquad \mbox{as }t\rightarrow \infty, \mbox{ in probability.}
\end{equation}
To this end, it is sufficient to pick $L(t)$ much larger than $\exp\left\{H(t)\right\}$.
Let 
\[\Upsilon_t^a=\left[h_t-\frac{a}{\sqrt{\varphi''(h_t)}},h_t+\frac{a}{\sqrt{\varphi''(h_t)}}\right],\qquad a>0.\]
In our result it turns out that the set of intermittency peaks may be taken as the set of those sites in which the potential height lies in $\Upsilon_t^a$:

\begin{theorem}[Intermittency]\label{relpo}
 Let Assumption (F*) and Condition (B) be satisfied. Then for every $\varepsilon>0$ there exists $a_\varepsilon$ such that
\[\lim\limits_{t\rightarrow\infty}\mathbf{P}\Bigg(1-\frac{\sum\limits_{x\in Q_{L_t}}u(t,x)\mathbbm{1}_{\xi(x)\in\Upsilon_{t}^a}}{\sum\limits_{x\in Q_{L_t}}u(t,x)}>\varepsilon\Bigg)=
\begin{cases}
1&\mbox{if }a<a_\varepsilon,\\
0&\mbox{if }a>a_\varepsilon.
\end{cases}
\]
\end{theorem}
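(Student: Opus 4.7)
The plan is to reduce the probabilistic statement to an asymptotic formula for the expected mass fraction, compute the latter by Laplace's method, and then upgrade to convergence in probability via the variance argument underlying \eqref{WLLN}. Write
\[S_t^{\mathrm{in}}:=\sum_{x\in Q_{L_t}}u(t,x)\mathbbm{1}_{\xi(x)\in\Upsilon_t^a},\quad S_t^{\mathrm{out}}:=\sum_{x\in Q_{L_t}}u(t,x)\mathbbm{1}_{\xi(x)\notin\Upsilon_t^a},\quad S_t:=S_t^{\mathrm{in}}+S_t^{\mathrm{out}}.\]
Translation invariance of $\xi$ gives $\langle S_t^{\mathrm{in}}\rangle=|Q_{L_t}|\langle u(t,0)\mathbbm{1}_{\xi(0)\in\Upsilon_t^a}\rangle$ and similarly for $S_t^{\mathrm{out}}$.

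The first key quantity is the ratio $r_t(a):=\langle u(t,0)\mathbbm{1}_{\xi(0)\notin\Upsilon_t^a}\rangle/\langle u(t,0)\rangle$. Following the proof of Theorem~\ref{main} and using $|\lambda^{R,*}_1-\xi(0)|\le 2d\kappa$ on the conditioning event, I expect that the indicator can be pushed inside the integral bounds so that, for any measurable $A\subset(c,\infty)$,
\[\langle u(t,0)\mathbbm{1}_{\xi(0)\in A}\rangle\asymp\int_A te^{th-\varphi_R^*(h)}\,dh,\qquad t\to\infty.\]
Substituting $h=h_t^{R,*}+y/\sqrt{(\varphi_R^*)''(h_t^{R,*})}$ and Taylor expanding to second order turns the integrand into $Ct\exp\{th_t^{R,*}-\varphi_R^*(h_t^{R,*})-y^2/2\}(1+o(1))$; the control on higher order terms on the Laplace window is exactly what Condition~(B) affords. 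Assumption~(F*), together with the fact that $\varphi_R^*$ differs from $\varphi$ only by a bounded additive shift, yields $h_t^{R,*}\sim h_t$ and $(\varphi_R^*)''(h_t^{R,*})\sim\varphi''(h_t)$, so the image of $\Upsilon_t^a$ in the $y$-variable is asymptotically $[-a,a]$. Hence
\[r_t(a)\longrightarrow r(a):=\frac{2}{\sqrt{2\pi}}\int_a^\infty e^{-y^2/2}\,dy,\qquad t\to\infty.\]
Define $a_\varepsilon$ as the unique positive solution of $r(a_\varepsilon)=\varepsilon$.

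Next I need a weak law of large numbers for $S_t^{\mathrm{in}}$ and $S_t^{\mathrm{out}}$ separately. The denominator $S_t$ is controlled by \eqref{WLLN}. For the restricted sums I would replay the second-moment argument from \cite{BAMR07}: the factor $\mathbbm{1}_{\xi(x)\in\Upsilon_t^a}$ is a bounded function of $\xi(x)$ alone, which is itself part of the potential driving $u(t,\cdot)$, so the covariance bounds on $u(t,x)u(t,y)$ that produce \eqref{WLLN} transfer verbatim, possibly after slightly enlarging $L_t$. This gives $S_t^{\mathrm{in}}/\langle S_t^{\mathrm{in}}\rangle\to 1$ and $S_t^{\mathrm{out}}/\langle S_t^{\mathrm{out}}\rangle\to 1$ in probability. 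Combining with the previous step,
\[\frac{S_t^{\mathrm{out}}}{S_t}=r_t(a)\cdot\frac{S_t^{\mathrm{out}}/\langle S_t^{\mathrm{out}}\rangle}{S_t/\langle S_t\rangle}\xrightarrow{\mathbf{P}} r(a),\]
so $\mathbf{P}(S_t^{\mathrm{out}}/S_t>\varepsilon)\to 0$ if $r(a)<\varepsilon$ and $\to 1$ if $r(a)>\varepsilon$, which is the stated dichotomy.

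The hard part will be the restricted version of Theorem~\ref{main}. As stated, Theorem~\ref{main} bounds expectations of products of functions of $u$ and does not obviously accommodate an indicator on $\xi(0)$. My plan is to trace through its proof and observe that the integration variable $h$ on its right-hand side stratifies the average by the height of the principal peak, which under Assumption~(F*) coincides (up to $2d\kappa$) with $\xi(0)$ on the event $\widehat{\xi}_R\le h-c$. A secondary technical nuisance is the identification $(\varphi_R^*)''(h_t^{R,*})\sim\varphi''(h_t)$, which I expect to be a soft consequence of Condition~(B) together with the bounded additive comparison between $\varphi$ and $\varphi_R^*$.
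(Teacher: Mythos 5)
Your proposal is correct and follows essentially the same route as the paper: a Laplace-method computation of the annealed mass fraction via Theorem~\ref{main} and the Tauberian theorem under Condition~(B), yielding the Gaussian limit $\Phi(a)-\Phi(-a)$ for the in-window part, followed by an invocation of the weak law of large numbers to upgrade to the probability statement. The paper packages the Laplace step as Lemma~\ref{relen}, carried out with the eigenvalue $\lambda_1^{R,*}$ and the window $\widetilde{\Upsilon}_t^a$, and then transfers to $\xi(0)\in\Upsilon_t^a$ via the identification $h_t^{R,*}=h_t+o(1)$ coming from the $2d\kappa$-comparison of \cite{GM98}; you work directly with $\xi(0)$ and correctly single out exactly this reconciliation between the integration variable of Theorem~\ref{main} (the principal eigenvalue) and the potential height as the delicate step. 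Your explicit decomposition into $S_t^{\mathrm{in}}$ and $S_t^{\mathrm{out}}$ and the separate WLLNs for each is a cleaner way of expressing the final convergence in probability, but it carries the same unstated requirement (also present in the paper, where the restricted WLLN is invoked without a separate proof) that the second-moment bound of \cite{BAMR07} survives the insertion of the indicator on $\xi(x)$; your observation that this indicator is a bounded function of the single-site potential value is the right heuristic. In short: same proof, slightly reorganized, with the two technical gaps (shift between $\xi(0)$ and $\lambda_1^{R,*}$, and restricted WLLN) correctly identified but, as in the paper, not fully closed.
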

\noindent
The locations of the peaks form a Bernoulli process, see Corollary~\ref{ber} for details.

\subsubsection{Ageing}\label{sec-ageing}

In this section, we present our results on the dynamic picture of intermittency in the PAM. We will investigate two types of ageing behaviours, {\it correlation ageing} and {\it intermittency ageing}. While the first type gives only rather indirect information about the intermittency peaks, intermittency ageing explicitly describes for how long the intermittency peaks remain relevant. Nevertheless, both approaches give very similar results.

Roughly speaking, a system is ageing if the time it spends in a certain state increases as a function of its current age. An overview of the topic of ageing can be found, for instance, in \cite{BA02}. For the PAM, there have been two approaches. In the case of a (time-dependent) white noise potential $\xi$ as defined in \cite{CM94}, a variant of correlation ageing was investigated in \cite{DD07} and \cite{AD11}. The authors found that there is no ageing. 

In \cite{MOS11} the authors consider a localised initial condition and a time-independent i.i.d.~potential with Pareto-distributed tails. They find that intermittency ageing holds. Their proofs rely on the two cities theorem proved in \cite[Theorem 1.1]{KLMS09} which states that, at any sufficiently late time, all the mass is concentrated in no more than two lattice points, almost surely. 

Let us describe our result on intermittency ageing. As we know from Theorem~\ref{relpo}, there are infinitely many intermittency peaks in our setting, possibly due to the homogeneous initial condition and to the lighter tails, so we have to use a modified definition and different techniques. To define the notion, introduce, for a scale function $s\colon(0,\infty)\to(0,\infty)$,
\[\mathscr{A}_s(t):=\mathbf{P}\Bigg(\Bigg|\frac{\sum\limits_{x\in Q_{L_{t+s(t)}} }u(t,x)\mathbbm{1}_{\xi(x)\in\Upsilon_{t}^a}}{\sum\limits_{x\in Q_{L_{t+s(t)}} }u(t,x)}-\frac{\sum\limits_{x\in Q_{L_{t+s(t)}} }u(t+s(t),x)\mathbbm{1}_{\xi(x)\in\Upsilon_{t}^a}}{\sum\limits_{x\in Q_{L_{t+s(t)}} }u(t+s(t),x)}\Bigg|<\varepsilon\Bigg), \qquad t>0.
\]
Recall that $Q_{L_{t+s(t)}}$ is chosen such that the weak law of large numbers from \eqref{WLLN} holds. We will consider only $a>a_\varepsilon$ as in Theorem~\ref{relpo}. Roughly speaking, $\mathscr{A}_s$ measures whether those potential points that are intermittency peaks at time $t$ are still relevant after time $t+s$.

We define {\em intermittency ageing} by requiring that for any small $\varepsilon>0$ there is $a>0$ and two scale functions $s_1,s_2$ satisfying $\lim_{t\to\infty}s_1(t)=\lim_{t\to\infty}s_2(t)=\infty$ such that
\begin{equation}
 \lim_{t\to\infty}|\mathscr{A}_{s_1}(t) -\mathscr{A}_{s_2}(t)|>0,
\end{equation}
i.e., the two limits of $\mathscr{A}_{s_1}$ and $\mathscr{A}_{s_2}$ both exist and are different.

By the {\it length of intermittency ageing} we understand the class of functions
\[\mathscr{A}:=\left\{s\colon\R\to\R\colon \lim\limits_{t\to\infty}s(t)=\infty,\exists\theta\in (0,\infty)\colon\lim_{t\to\infty}|\mathscr{A}_{s}(t) -\mathscr{A}_{\theta s}(t)|>0\right\}.\] 

\begin{theorem}[Intermittency ageing]\label{age}
Let Assumption (F*) and Condition (B) be satisfied. Then the PAM ages in the sense of intermittency ageing if and only if $\lim_{t\to\infty}H''(t)=0$. In this case $\mathscr{A}\ni 1/\sqrt{H''(t)}=o(t)$.
\end{theorem}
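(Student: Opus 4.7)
The plan is to reduce intermittency ageing to the asymptotic analysis of a single deterministic ratio of annealed moments, and then to analyse that ratio by the Laplace method underlying Theorem~\ref{main}. By Theorem~\ref{relpo}, applied once and for all with some fixed $a$ exceeding the critical $a_\varepsilon$, the first fraction inside $\mathscr{A}_s(t)$ tends to $1$ in probability, so $\mathscr{A}_s(t)=\mathbf{P}(|1-B_{t,s}|<\varepsilon)+o(1)$, where
\[B_{t,s}:=\frac{\sum_{x\in Q_{L_{t+s}}}u(t+s,x)\mathbbm{1}_{\xi(x)\in\Upsilon_t^a}}{\sum_{x\in Q_{L_{t+s}}}u(t+s,x)}.\]
A weighted version of the weak law of large numbers from~\cite{BAMR07}---which carries over because multiplying each summand by a single-site indicator leaves the spatial covariance structure essentially unchanged---yields $B_{t,s}\to r(t,s)$ in probability, where $r(t,s):=\langle u(t+s,0)\mathbbm{1}_{\xi(0)\in\Upsilon_t^a}\rangle/\langle u(t+s,0)\rangle$.

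I would evaluate $r(t,s)$ by running Theorem~\ref{main} (with $p=1$, $f_1=\mathrm{id}$, $t_1(t)=t+s$) on both numerator and denominator. Under Assumption~(F*) the conditional tail $\mathbf{P}(\lambda_1^{R,*}>h\mid\widehat\xi_R\le h-c)$ is asymptotically $\bar F(h)$ up to a shift of order $\kappa$, and the $R$-dependent prefactors from Theorem~\ref{main} appear identically in numerator and denominator and therefore cancel in the ratio. Rescaling $h=h_{t+s}+y/\sqrt{\varphi''(h_{t+s})}$ in the resulting Laplace integrals, using $\varphi'(h_t)=t$ together with $H''(t)\sim 1/\varphi''(h_t)$ (a consequence of $H\sim\psi$) to derive $h_{t+s}-h_t\sim sH''(t)$, and invoking the self-neglecting Condition~(B) to replace $\varphi''(h_{t+s})$ by $\varphi''(h_t)$ up to a factor $1+o(1)$, the set $\Upsilon_t^a$ is mapped to $[-\beta-a,-\beta+a]+o(1)$ with $\beta:=s\sqrt{H''(t)}$. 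Consequently
\[r(t,s)\longrightarrow I(\beta,a):=\int_{-\beta-a}^{-\beta+a}\phi(y)\,{\rm d}y\]
along any sequence $s=s(t)$ for which $\beta$ has a limit in $[0,\infty]$, where $\phi(y):=(2\pi)^{-1/2}{\rm e}^{-y^2/2}$.

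It follows that $\mathscr{A}_s(t)\to 1$ when $I(\beta,a)>1-\varepsilon$ and $\mathscr{A}_s(t)\to 0$ when $I(\beta,a)<1-\varepsilon$. If $H''(t)\to 0$, pick $a$ so large that $I(0,a)>1-\varepsilon/2$ and pick $M>a$ so large that $I(M,a)<1-2\varepsilon$; then $s_1(t):=1/\sqrt{H''(t)}$ and $s_2(t):=M/\sqrt{H''(t)}$ both diverge and give $\beta=1$ and $\beta=M$ respectively, hence $\mathscr{A}_{s_1}(t)\to 1$ while $\mathscr{A}_{s_2}(t)\to 0$, which is intermittency ageing. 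Re-running the argument with $s(t):=1/\sqrt{H''(t)}$ and its multiple $\theta s(t)$ for $\theta$ large places $1/\sqrt{H''(t)}$ in $\mathscr{A}$, and the bound $1/\sqrt{H''(t)}=o(t)$ follows from the regularity of $\varphi$ imposed by Assumption~(F*) and Condition~(B). Conversely, if $H''(t)\not\to 0$, there is a subsequence $t_n\to\infty$ with $H''(t_n)\ge\delta>0$, along which $\beta_n\to\infty$ for every $s(t_n)\to\infty$, so $\mathscr{A}_{s}(t_n)\to 0$; hence any pair $(s_1,s_2)$ of scale functions tending to infinity either has both limits equal to $0$ or has no limit at all, which rules out intermittency ageing.

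The main obstacle I anticipate is the weighted weak law of large numbers for $\sum_x u(t+s,x)\mathbbm{1}_{\xi(x)\in\Upsilon_t^a}$: the indicator event has a vanishing single-site probability of order $\bar F(h_t)/\sqrt{\varphi''(h_t)}$, so one must check that the box $Q_{L_{t+s}}$ chosen for \eqref{WLLN} is still large enough to contain many contributing sites, which requires the second moment estimate of~\cite{BAMR07} to be adapted uniformly across the tilted window. A subsidiary technical point is sharpening the two-sided bounds in Theorem~\ref{main} to an asymptotic equivalence for the ratio $r(t,s)$, which requires the $R$-dependent prefactors on the upper and lower sides to coincide once $R$ is taken sufficiently large.
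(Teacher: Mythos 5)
Your argument follows the same basic route as the paper: reduce $\mathscr{A}_s(t)$ via Theorem~\ref{relpo} and the weak law of large numbers \eqref{WLLN} to the annealed ratio $r(t,s)=\langle u(t+s,0)\mathbbm{1}_{\xi(0)\in\Upsilon_t^a}\rangle/\langle u(t+s,0)\rangle$, then analyse how $\Upsilon_t^a$ sits relative to $\Upsilon_{t+s}^a$ via a Taylor expansion of $h_t$, and finally distinguish $\lim H''(t)=0$ from the contrary. The main difference is that you go further and evaluate the limit sharply: your change of variables $h=h_{t+s}+y/\sqrt{\varphi''(h_{t+s})}$ in the Laplace integral coming from Theorem~\ref{main}, combined with $h_{t+s}-h_t\sim s/\varphi''(h_t)$ and the self-neglecting property from Condition~(B), yields the explicit Gaussian limit $r(t,s)\to I(\beta,a)=\int_{-\beta-a}^{-\beta+a}\phi(y)\,{\rm d}y$ with $\beta=s\sqrt{H''(t)}$. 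The paper instead only distinguishes the two extreme cases ($\Upsilon_t^a\cap\Upsilon_{t+s}^a=\emptyset$ versus $|\Upsilon_t^a\triangle\Upsilon_{t+s}^a|\to 0$), which suffices but gives less information; your version is the limit theorem underlying that dichotomy and it makes the role of the time scale $1/\sqrt{H''(t)}$ transparent. It also sidesteps an infelicity in the paper's case analysis, where the conditions $s=o(1/\sqrt{(h_t)'})$ and $1/\sqrt{(h_t)'}=o(s)$ have their associated conclusions (disjoint versus nearly coincident windows) inadvertently interchanged; your explicit formula prevents any such confusion.

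Two small points to tidy. First, you pick $a$ so that $I(0,a)>1-\varepsilon/2$ but then take $s_1(t)=1/\sqrt{H''(t)}$, which gives $\beta=1$; to conclude $\mathscr{A}_{s_1}(t)\to 1$ you need $I(1,a)>1-\varepsilon$, which does hold once $a$ is large enough, but should be stated, or one should instead take $s_1(t)=o(1/\sqrt{H''(t)})$ so that $\beta\to 0$. Second, the weighted weak law of large numbers for $\sum_x u(t+s,x)\mathbbm{1}_{\xi(x)\in\Upsilon_t^a}$ that you flag is indeed the technical crux; the paper handles it no more explicitly than you do (it cites Fatou's lemma and the i.i.d.\ structure of $\xi$), so you are at parity with the original here, and your identification of it as the weak link is well placed.
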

\noindent
For the study of correlation ageing we investigate the following time correlation coefficient
 \[A_f(s,t)=\text{corr}\Big(f\big(u(t,0)\big),f\big(u(t+s(t),0)\big)\Big)=\frac{\text{cov}\Big(f\big(u(t,0)\big),f\big(u(t+s(t),0)\big)\Big)}{\sqrt{\text{var}\Big(f\big(u(t,0)\big)\Big)\text{var}\Big(f\big(u(t+s(t),0)\big)\Big)}}.\]
 Here $f\in C$ is a strictly increasing function with $\lim_{t\rightarrow\infty}f(t)=\infty$.We define {\em correlation ageing} by requiring that there exist two scale functions $s_1,s_2$ satisfying $\lim_{t\to\infty}s_1(t)=\lim_{t\to\infty}s_2(t)=\infty$ such that
\begin{equation}\label{defage}
\lim_{t\to\infty}|A_f(s_1,t) -A_f(s_2,t)|>0.
\end{equation}
\noindent
By the {\it length of correlation ageing} we understand the class of functions
\[\mathcal{A}:=\left\{s\colon\R\to\R\colon \lim\limits_{t\to\infty}s(t)=\infty,\exists\theta\in (0,\infty)\colon\lim_{t\to\infty}|A_f(s,t) -A_f(\theta s,t)|>0\right\}.\] 

\begin{theorem}[Correlation ageing]\label{ageH}
 Let Assumption (F*) and Condition (B) be satisfied and $\varphi\in C^2$ be ultimately convex.
 Then the PAM ages for $f(x)=x^p,p\in\mathbb{R_+}$ in the sense of correlation ageing if and only if $\lim\limits_{t\rightarrow\infty}H''(t)=0$.
In this case $\mathcal{A}\ni 1/\sqrt{H''(t)}=o(t)$.
\end{theorem}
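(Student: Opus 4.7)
The plan is to express the variances and covariance entering $A_f(s,t)$ in terms of moments of the solution, extract their exact asymptotics from Theorems~\ref{main} and~\ref{exmomw}, and then read off the natural time scale from a second-order Taylor expansion of the Legendre function $\psi_R^*$.

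First I will compute the three ingredients. For the variances $\text{var}(u(t,0)^p)$ and $\text{var}(u(t+s(t),0)^p)$, Theorem~\ref{exmomw} (applied with the parameter $p$ replaced by $2p$) supplies asymptotic equivalents for $\langle u(t,0)^{2p}\rangle$ and $\langle u(t+s(t),0)^{2p}\rangle$; intermittency (strict convexity of $\psi_R^*$ gives $\psi_R^*(2pt)>2\psi_R^*(pt)$) makes the square of the mean negligible, so the variances are asymptotically equal to the second moments. For the covariance I apply Theorem~\ref{main} with $p=2$, $f_1(x)=f_2(x)=x^p$, $t_1(t)=t$, $t_2(t)=t+s(t)$; since the derivative of $e^{pth}e^{p(t+s(t))h}$ is $p(2t+s(t))e^{p(2t+s(t))h}$, the Laplace method (exactly as in the proof of Theorem~\ref{exmomw}) yields
\[\langle u(t,0)^p u(t+s(t),0)^p\rangle \sim \exp\left\{\psi_R^*(p(2t+s(t))) + \log p(2t+s(t)) + \tfrac{1}{2}\log\tfrac{\pi}{(\varphi_R^*)''(h^{R,*}_{p(2t+s(t))})}\right\},\]
up to the arbitrarily mild slack $\underline{C},\overline{C}$ of Theorem~\ref{main}. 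Strict convexity of $\psi_R^*$ again shows that the covariance is asymptotically equivalent to this mixed moment.

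Next I will assemble $\log A_f(s,t)$. Writing $u:=2pt$ and $v:=ps(t)$, the Legendre-side expression simplifies to
\[\log A_f(s,t) = \psi_R^*(u+v) - \tfrac{1}{2}\psi_R^*(u) - \tfrac{1}{2}\psi_R^*(u+2v) + O(1),\]
the bounded term absorbing the logarithmic prefactors and the $(\varphi_R^*)''$-corrections, which vary slowly in the relevant window by Condition~(B). In the regime $v=o(u)$ a second-order Taylor expansion cancels the zeroth- and first-order contributions, giving
\[\log A_f(s,t) = -\tfrac{p^2}{2}\,s(t)^2\,(\psi_R^*)''(2pt) + o\bigl(s(t)^2\,(\psi_R^*)''(2pt)\bigr) + O(1).\]
Standard Legendre duality gives $(\psi_R^*)''(t)=1/(\varphi_R^*)''(h_t^{R,*})$, and Assumption~(F*) together with Condition~(B) identifies this with $H''(t)$ up to asymptotic equivalence: $\varphi_R^*$ differs from $\varphi$ only by the bounded shift $2d\kappa$ controlled by Assumption~(F*), while $H$ and the Legendre transform $\psi$ of $\varphi$ agree up to logarithmic corrections by differentiating the Laplace representation $H(t)=\log\langle e^{t\xi(0)}\rangle$. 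Condition~(B) further forces $\sqrt{H''(\cdot)}$ to be self-neglecting, so $H''(2pt)\sim H''(t)$ and the built-in self-neglecting property $\sqrt{\varphi''(h_t)}=o(t)$ rewrites as $1/\sqrt{H''(t)}=o(t)$.

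Finally I will read off the dichotomy. If $H''(t)\to 0$, then $s(t)=1/\sqrt{H''(t)}\to\infty$ is an admissible scale and replacing $s$ by $\theta s$ shifts the limit of $\log A_f(s,t)$ by the nonzero amount $\tfrac{p^2}{2}(1-\theta^2)$ for $\theta\neq 1$; this proves correlation ageing and $1/\sqrt{H''(t)}\in\mathcal{A}$. If $H''(t)\not\to 0$, then any $s(t)\to\infty$ forces $s(t)^2H''(t)\to\infty$, so $A_f(s,t)\to 0$ on every such scale and no two scales can separate the limit, ruling out correlation ageing. The main obstacle will be to upgrade the two-sided inequalities of Theorem~\ref{main} (with their slack $\underline{C},\overline{C}$ and the free/zero-boundary discrepancy $\lambda_1^{R,0}$ vs.\ $\lambda_1^{R,\rm f}$) to genuine asymptotic equivalence for the covariance, and to verify that the bounded $O(1)$ correction in the Taylor expansion really cancels in the ratio $A_f$ uniformly in $s(t)$, so that the Taylor remainder is of strictly lower order than the leading term $s(t)^2H''(t)$ throughout the ageing window.
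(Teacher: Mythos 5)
Your route is recognisably the paper's, but it misses the one real shortcut the paper uses and overstates a couple of points along the way.

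The paper's proof is organised around the exact moment identity $\langle u(t,0)\,u(s,0)\rangle=\langle u(t+s,0)\rangle$ for $u_0\equiv 1$ (from \cite[(3.13)]{GM90}, extended to $p\in\N$), which collapses the covariance to a \emph{single} moment so that Theorem~\ref{exmomw} can be quoted directly with no slack; Theorem~\ref{main} is only invoked at the very end to extend the conclusion to all real $p>0$, and even then via the one-line observation $\langle u(t,0)^p u(t+s(t),0)^p\rangle\sim\langle u(p(2t+s(t)),0)\rangle$. You instead feed the mixed moment straight into Theorem~\ref{main} with $p=2$ factors and re-run Laplace. That is essentially the same calculation the paper's closing remark alludes to, so the two proofs are not fundamentally different; but because Theorem~\ref{main} gives only $\underline{C},\overline{C}$-bounds with different ($0$ versus ${\rm f}$) boundary conditions, you then have to reconstruct the full upgrade-to-$\sim$ step (Lemma~\ref{probequi} plus Theorem~\ref{strongt} and a diagonal argument in $\underline{C},\overline{C}$) that Theorem~\ref{exmomw} already packages for the single moment. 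You flag this, but you have not done it, and it is precisely the step the GM90 identity makes unnecessary for integer $p$.

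Two more points of care. First, your error term must be shown to be $o(1)$, not merely $O(1)$: in the ageing window the leading term $-\tfrac{p^2}{2}s(t)^2(\psi_R^*)''(2pt)$ is $\Theta(1)$, so a genuinely bounded but non-vanishing correction would destroy the dichotomy. The paper verifies this: the $\epsilon$-terms from Theorem~\ref{strongt} are individually $o(1)$, the combination $\log(2t+s)-\tfrac12\log 2t-\tfrac12\log(2t+2s)$ vanishes for $s=o(t)$, and the $(\varphi_R^*)''$-corrections cancel by Condition~(B). Second, Condition~(B) says $1/\sqrt{H''}$ is self-neglecting; this gives $H''(t+O(1/\sqrt{H''(t)}))\sim H''(t)$, \emph{not} $H''(2pt)\sim H''(t)$ (for Weibull the latter ratio is a nontrivial constant). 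That only costs you a multiplicative constant in the ageing scale and so does not change the membership $1/\sqrt{H''}\in\mathcal{A}$ (since $\mathcal{A}$ is scale-invariant), but the wording should be corrected; the paper handles this with explicit two-sided constants $C_1,C_2$ rather than claiming asymptotic equivalence of $H''$ at comparable arguments.
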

\noindent
Notice that for both definitions ageing happens for lighter tails.
In Theorem~\ref{ageal} we show that Theorem~\ref{ageH} can be extended to more general potentials if we weaken the requirement \eqref{defage}.

\subsection{Overview}
In Section \ref{Time correlations} we prove Theorem~\ref{main} which forms the basis of this paper. In Section \ref{The conditional probability} we show how the conditional probability in Theorem~\ref{main} can be evaluated. After that we will give several applications. In Section \ref{Exact moment asymptotics} we apply Theorem~\ref{main} to prove Theorem~\ref{exmomw} and to derive exact asymptotics for statistical moments and more general functionals of the PAM. In Section \ref{Relevant potential peaks} we prove Theorem~\ref{relpo}. We conclude how the intermittency peaks are distributed in space and give precise estimates on their frequency. In Section \ref{Ageing} we investigate the ageing behaviour of the PAM and prove Theorems~\ref{age}, \ref{ageH} and \ref{ageal}.

\section{Time correlations}\label{Time correlations}

In this section we prove Theorem~\ref{main}.
The strategy of the proof is to show that asymptotically only those realisations of the potential $\xi$ contribute to the expectation $\left<\prod_{i=1}^p f_i\left(u\left(t_i(t),0\right)\right)\right>$, where the highest potential peak $\xi^{(1)}_R$ in the large centered box $Q_R$ is significantly higher than the second one, and where $\xi^{(1)}_R$ is located in the origin. It turns out that for those realisations we can neglect all eigenpairs but the principal one in the spectral representation, and the first eigenfunction becomes delta like. We will see that it is sufficient to consider a large box with time-independent size.
The following universal bounds are always true (see for instance \cite[Theorem 3.1]{GM90} and \cite[Proof of Theorem 2.16]{GM98}). 

\begin{lemma}\label{trivial}  Let $t\geq0$ then for every $R>1$ and $p\in\N$,
\begin{enumerate}
 \item $\lambda_1^{R,*}(\xi)\leq\xi^{(1)}_R\leq\lambda_1^{R,*}(\xi)+2d\kappa$,
 \item ${\rm e}^{H(pt)-2d\kappa pt}\leq\left<u(t,0)^p\right>\leq {\rm e}^{H(pt)}$.
\end{enumerate}
\end{lemma}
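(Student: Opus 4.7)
Both parts are of a routine nature. The plan is to derive (i) from the Rayleigh--Ritz variational formula for the principal eigenvalue and (ii) from the Feynman--Kac representation together with a superadditivity property of the cumulant generating function $H$.

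For (i), start from
\[\lambda^{R,*}_1(\xi)=\sup_{g\in\ell^2(Q_R),\,\|g\|_2=1}\Big(\kappa\langle\Delta^{*}_Rg,g\rangle+\sum_{x\in Q_R}\xi(x)g(x)^2\Big).\]
The upper bound $\lambda_1^{R,*}(\xi)\le\xi^{(1)}_R$ follows because $-\Delta_R^{\rm f}$ is positive semidefinite by the Dirichlet form displayed earlier in the paper (the analogue for $\Delta_R^{0}$ is obtained by extending the test function trivially to $\Z^d$), so the Laplacian term is non-positive, while the potential term is bounded by $\xi^{(1)}_R\|g\|_2^2=\xi^{(1)}_R$. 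For the matching lower bound $\xi^{(1)}_R-2d\kappa\le\lambda_1^{R,*}(\xi)$, insert $g=\delta_{x^*}$, where $x^*\in Q_R$ attains $\xi(x^*)=\xi^{(1)}_R$. Both Laplacian variants give $\kappa\langle\Delta^{*}_R\delta_{x^*},\delta_{x^*}\rangle\ge-2d\kappa$, since $x^*$ has at most $2d$ neighbours, and the potential term equals $\xi^{(1)}_R$.

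For the lower bound in (ii), restrict the Feynman--Kac expectation to the event that the random walk stays at the origin throughout $[0,t]$. This event has probability $e^{-2d\kappa t}$, the first jump time from $0$ being exponential with rate $2d\kappa$; on it the integrand reduces to $t\xi(0)$, so $u(t,0)\ge e^{-2d\kappa t}e^{t\xi(0)}$ for every realisation of $\xi$. Raising to the $p$-th power and averaging over $\xi$ yields $\langle u(t,0)^p\rangle\ge e^{-2d\kappa pt}\langle e^{pt\xi(0)}\rangle=e^{H(pt)-2d\kappa pt}$.

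For the upper bound, since $p\in\N$ I expand the $p$-th moment as a $p$-fold independent random-walk expectation,
\[u(t,0)^p=\mathbb{E}_0^{\otimes p}\exp\Big\{\sum_{x\in\Z^d}\xi(x)L_t(x)\Big\},\qquad L_t(x):=\sum_{i=1}^p\int_0^t\mathbbm{1}\{X^{(i)}_s=x\}\,{\rm d}s,\]
and note that $\sum_xL_t(x)=pt$. Interchanging $\langle\cdot\rangle$ with $\mathbb{E}_0^{\otimes p}$ by Fubini and using independence of $\xi$ across sites turns the $\xi$-average into $\prod_x e^{H(L_t(x))}$, so it remains to verify $\sum_x H(L_t(x))\le H(pt)$. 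This is the superadditivity $H(a+b)\ge H(a)+H(b)$ for $a,b\ge 0$, itself an immediate consequence of convexity together with $H(0)=0$ (write $H(a)\le\frac{a}{a+b}H(a+b)$, do the analogue for $H(b)$, and add). None of the steps poses a genuine obstacle; the only point worth flagging is the superadditivity, as it is the sole place where convexity of $H$ is truly used.
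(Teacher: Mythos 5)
Your proof is correct, and it is essentially the standard argument: the paper itself does not prove this lemma but refers to Theorem~3.1 of \cite{GM90} and the proof of Theorem~2.16 of \cite{GM98}, whose reasoning matches yours (Rayleigh--Ritz with the test function $\delta_{x^*}$ for part (i), Feynman--Kac with Fubini and superadditivity of $H$ for the upper bound in (ii)), and the paper's own remark following the lemma spells out exactly your ``stay at the origin'' argument for the lower bound in (ii). The only cosmetic observation is that the upper bound in (ii) can be obtained with a single random walk via Jensen's inequality applied to $x\mapsto x^p$ in the Feynman--Kac formula, which avoids the $p$-fold expansion and would in fact extend to real $p\geq1$; but since the lemma is stated for $p\in\N$, your $p$-fold route is entirely adequate.
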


\begin{remark}
The lower bound in Lemma~\ref{trivial} ii) can be proven by forcing the random walk $X$ from the Feynman-Kac representation to stay in the origin up to time $t$.
Hence, it remains true if we replace the power function by an arbitrary nonnegative function $f$. Then it reads $\left<f({\rm e}^{t\xi(0)-2d\kappa t})\right>\leq\left<f\big(u(t,0)\big)\right>$.
\end{remark}

Now we show that we can restrict our calculations to an increasing box $ Q_{R_{\widehat{t}}}$ with zero boundary conditions where $R_t:=t\log^2t$ and $\widehat{t}:=\max_{i=1,\dots,p}t_i$. By $\tau_U:=\text{inf}\left\{t>0\colon X_t\in U\right\}$ we denote the first hitting time of a set $U$ by the random walk $X$. For $x\in\mathbb{Z}^d$ we write $\tau_x$ instead of $\tau_{\{x\}}$. Let $u_R$ be the solution to the PAM in $Q_R$ with Dirichlet boundary conditions. Its Feynman-Kac representation is given by
\[u_R\left(t,x\right)=\mathbb{E}_x\exp\Bigg\{\int\limits_0^t\xi(X_s)\,{\rm d}s\Bigg\}\mathbbm{1}_{\tau_{Q_{R}^{\rm c}}\geq t},\qquad(t,x)\in [0,\infty)\times\mathbb{Z}^{d}.\]

 \begin{proposition}\label{cut}
Let $\xi$ be i.i.d., nonnegative and unbounded from above.
If $f_1,\dots,f_p\in\mathcal{F}$ and $t_1,\dots,t_p\in\mathcal{T}$, then
\[\left<\prod_{i=1}^p f_i\big(u\left(t_i(t),0\right)\big)\right>\sim\left<\prod_{i=1}^p f_i\left(u_{R_{\widehat{t}(t)}}\left(t_i(t),0\right)\right)\right>,\qquad t\rightarrow\infty.\]
\end{proposition}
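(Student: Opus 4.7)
The plan is to split the Feynman--Kac representation of $u(t_i(t),0)$ according to whether the random walk stays in $Q_{R_{\widehat{t}(t)}}$ up to time $t_i(t)$ or leaves it, writing $u(t_i(t),0)=u_{R_{\widehat{t}(t)}}(t_i(t),0)+v_i(t)$ with
\[v_i(t):=\mathbb{E}_0\bigg[\exp\bigg\{\int_0^{t_i(t)}\xi(X_s)\,{\rm d}s\bigg\}\mathbbm{1}_{\tau_{Q_{R_{\widehat{t}}}^{\rm c}}<t_i(t)}\bigg].\]
Since $u_R\leq u$ and each $f_i$ is increasing, $\langle\prod_i f_i(u_R(t_i,0))\rangle\leq\langle\prod_i f_i(u(t_i,0))\rangle$ is immediate, so only the matching asymptotic upper bound requires work. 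For that I would use the monotonicity bound $f_i(u(t_i,0))\leq f_i((1+\eta)u_R(t_i,0))+f_i((1+\eta)v_i(t)/\eta)$ valid for every $\eta>0$ (from $u_R+v_i\leq(1+\eta)\max(u_R,v_i/\eta)$), expand the product over $i$ into a leading term plus $2^p-1$ mixed terms each carrying at least one factor involving $v_i$, and eventually let $\eta\downarrow 0$ after $t\to\infty$. Potter's bounds for the regularly varying $f_i$ show that the leading term is at most $(1+\eta)^{\sum_i\gamma_i+o(1)}\prod_if_i(u_R(t_i,0))$, which yields a $1+o(1)$ factor when $\eta\to 0$.

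The crux is then to show that every mixed term is asymptotically negligible compared with $\langle\prod_i f_i(u_R(t_i,0))\rangle$. Applying H\"older's inequality to decouple the expectation of a mixed term into a product of individual moments, the essential quantity becomes $\langle v_i(t)^m\rangle$ for large $m$. Expanding the $m$-th power as an $m$-fold Feynman--Kac integral over independent copies $X^1,\ldots,X^m$, the i.i.d.\ structure of $\xi$ together with the superadditivity of $H$ (from convexity and $H(0)=0$) and the local-time identity $\sum_{y}\sum_{k}\ell^k_{t_i}(y)=mt_i$ yields $\langle v_i(t)^m\rangle\leq e^{H(mt_i)}\,\mathbb{P}_0(\tau_{Q_{R_{\widehat{t}}}^{\rm c}}<t_i(t))^m$. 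Since $R_{\widehat{t}}=\widehat{t}\log^2\widehat{t}$, the random walk must produce more than $\widehat{t}\log^2\widehat{t}$ jumps of its rate-$2d\kappa$ Poisson clock to exit $Q_{R_{\widehat{t}}}$ by time $\widehat{t}$, so a Chernoff estimate gives $\mathbb{P}_0(\tau_{Q_{R_{\widehat{t}}}^{\rm c}}<\widehat{t})\leq\exp\{-(2+o(1))\widehat{t}\log^2\widehat{t}\cdot\log\log\widehat{t}\}$, which beats every exponential in $\widehat{t}$.

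Combining this super-exponential decay with Potter's bound $f_i(x)\leq Cx^{\gamma_i+\delta}$ converts the above into $\langle f_i((1+\eta)v_i(t)/\eta)^m\rangle^{1/m}\leq e^{-Kt}$ for every $K>0$; together with the lower bound $\langle f_i(u_R(t_i,0))\rangle\geq\langle f_i(e^{t_i\xi(0)-2d\kappa t_i})\rangle$ provided by the remark after Lemma~\ref{trivial} and with hypothesis~\eqref{wachs} (which forces $\langle f_i(e^{t_i\xi(0)})\rangle$ to beat every exponential in $t_i$), each mixed term is $o(1)\cdot\langle\prod_i f_i(u_R(t_i,0))\rangle$, yielding the asserted asymptotic equivalence. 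The main obstacle I expect is not this super-exponential tail estimate, which is fairly routine, but rather the quantitative $\eta\downarrow 0$ step: since $\mathcal{F}$ contains convex $f_i$ of arbitrary positive index, one has to control the ratio $v_i/u_R$ on a ``typical'' event (using the pointwise lower bound $u_R(t_i,0)\geq e^{t_i\xi(0)-2d\kappa t_i}$ against the super-exponential smallness of $v_i$) and complement this with a H\"older-type moment estimate on the exceptional complement, so that the overall multiplicative error is a genuine $1+o(1)$ rather than the cruder $\prod_i(1+\eta)^{\gamma_i}$ from the naive subadditivity.
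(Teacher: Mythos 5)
Your overall strategy is the same as the paper's: write $u=u_R+v$, split according to whether the escape part is small relative to $u_R$, expand the product, handle the leading term via regular variation, and show the mixed terms are negligible. The leading-term analysis (Potter's bounds, letting $\eta\downarrow 0$) is fine, and is essentially what the paper does via the uniform convergence theorem. But the step you flag as ``fairly routine'' --- the negligibility of the escape contribution --- is exactly where your argument breaks down, while the $\eta\downarrow 0$ step that worries you is actually not the issue.

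The claim that $\langle f_i((1+\eta)v_i(t)/\eta)^m\rangle^{1/m}\leq e^{-Kt}$ for every $K>0$ is false. Your own moment bound gives $\langle v_i^{m\gamma}\rangle^{1/m}\lesssim e^{H(m\gamma t_i)/m}\,\mathbf{P}_0(\tau_{Q_{R_{\widehat t}}^{\rm c}}<t_i)^{\gamma}$, and the escape probability only decays like $\exp\{-(1+o(1))\,\widehat t\log^2\widehat t\cdot\log\log\widehat t\}$. For Weibull tails with parameter $\gamma'>1$ one has $H(t)\asymp t^{\gamma'/(\gamma'-1)}$, so for any fixed $m$ the term $e^{H(m\gamma t_i)/m}\asymp\exp\{c\,t_i^{1+1/(\gamma'-1)}\}$ grows strictly faster than the escape probability decays; the product tends to $+\infty$, not to $0$, let alone to $e^{-Kt}$. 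In short, $\langle v_i^m\rangle$ is not small in absolute terms --- it only becomes negligible after dividing out the matching $e^{H(\cdot)}$-growth of $\langle\prod_j f_j(u_R(t_j,0))\rangle$. To make H\"older work you would need to choose the conjugate exponents so that the $H$-contributions on both sides cancel exactly (the exponents $q_i\propto 1/a_i$ with $a_i=\beta_i t_i$ do the job), which is a nontrivial balancing you neither state nor carry out. The paper sidesteps this entirely: it bounds $\widetilde u(t,0)\leq e^{t\xi^{(1)}_{Q_{R_t}}}\,\mathbf{P}_0(\tau_{Q_{R_t}^{\rm c}}\leq t)$ pointwise, so the large exponential $e^{\beta t\xi(0)}$ appears in both $\langle f_1(\widetilde u)\rangle$ and the lower bound $\langle f_1(u)\rangle\geq\langle e^{\beta t\xi(0)-2d\kappa\beta t}L(u(t,0))\rangle$ and cancels in the ratio, leaving only the super-exponentially small escape probability against the harmless $e^{2d\kappa\beta t}$. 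Either adopt that pointwise comparison, or make the balanced H\"older exponents explicit; as written, your intermediate estimate is wrong and the negligibility of the mixed terms does not follow.
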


\begin{proof}
Let \[
\widetilde{u}_{R}\left(t,x\right):=u\left(t,x\right)-u_{R}\left(t,x\right)=\mathbb{E}_x\exp\Bigg\{\int\limits_0^t\xi(X_s)\,{\rm d}s\Bigg\}\mathbbm{1}_{\tau_{Q_{R}^{\rm c}}<t},\quad(t,x)\in [0,\infty)\times\mathbb{Z}^{d}.\]
Then for every $\delta>0$ we find that
\begin{eqnarray}
&&\left<\prod_{i=1}^p f_i\left(u\left(t_i,0\right)\right)\right>\nonumber\\
&=&\left<\prod_{i=1}^p f_i\left( u_{R_{\widehat{t}}}\left(t_i,0\right)+\widetilde{u}_{R_{\widehat{t}}}\left(t_i,0\right)\right)\left[\mathbbm{1}_{\forall i\colon\widetilde{u}_{R_{\widehat{t}}}\left(t_i,0\right)\leq\delta u_{R_{\widehat{t}}}\left(t_i,0\right)}+\mathbbm{1}_{\exists i\colon\widetilde{u}_{R_{\widehat{t}}}\left(t_i,0\right)>\delta u_{R_{\widehat{t}}}\left(t_i,0\right)}\right]\right>\nonumber\\
&\leq&
\sum\limits_{T\in\mathcal{P}(p)}\left<\prod_{i\in T}f_i\left( u_{R_{\widehat{t}}}\left(t_i,0\right)\left(1+\delta\right)\right)\prod_{j\in T^{\rm c}}f_j\left(\widetilde{u}_{R_{\widehat{t}}}\left(t_j,0\right)\left(1+\frac{1}{\delta}\right)\right)\right>.\label{tauk}
\end{eqnarray}
Here $\mathcal{P}(p)$ denotes the power set of $\{1,\dots,p\}$ and $T^{\rm c}$ denotes the complement of $T$ within $\{1,\dots,p\}$. 
Since all $f_i$ are regularly varying, it follows that for every $\theta>1$ there exists $\delta=\delta(\theta)$ with $\lim\limits_{\theta\rightarrow 1}\delta(\theta)=0$, and $C_{\theta}$ such that
\begin{equation}\label{delthet}
\max\limits_{i=1,\dots,p}\frac{f_i\big((1+\delta)u\big)}{f_i\left(u\right)}\leq\theta,\qquad u>C_{\theta}.
\end{equation}
Now choose $\theta>1$ arbitrary and fix $\delta>0$ such that \eqref{delthet} is satisfied.\\
Because all $f_i$ are also increasing to infinity we get for large $t$,
\begin{equation}\label{upbou}
\left<\prod_{i=1}^{p}f_i\left( u_{R_{\widehat{t}}}\left(t_i,0\right)\left(1+\delta\right)\right)\right>
\leq\theta\left<\prod_{i=1}^p f_i\left( u_{R_{\widehat{t}}}\left(t_i,0\right)\right)\right>+\prod_{i=1}^{p}f_i\big((1+\delta)C_{\theta}\big).
\end{equation}
Since almost surely
$ u_{R_{\widehat{t}}}\left(t_i(t),0\right)\stackrel{t\rightarrow\infty}{\longrightarrow}\infty$ for all $i$, we can apply Fatou's lemma and see that the asymptotic behaviour of the right hand side of \eqref{upbou} is determined by \[\theta\left<\prod_{i=1}^p f_i\left( u_{R_{\widehat{t}}}\left(t_i,0\right)\right)\right>.\]
By similar arguments we find that there exists $C_{u}$ such that for sufficiently large $t$, 
\begin{eqnarray*} &&\left<\prod_{i\in T}f_i\left( u_{R_{\widehat{t}}}\left(t_i,0\right)\left(1+\delta\right)\right)\prod_{j\in T^{\rm c}}f_j\left(\widetilde{u}_{R_{\widehat{t}}}\left(t_i,0\right)\Big(1+\frac{1}{\delta}\Big)\right)\right>\\
&\leq&C_{u}\left<\prod_{i\in T}f_i\left( u_{R_{\widehat{t}}}\left(t_i,0\right)\right)\prod_{j\in T^{\rm c}}f_j\left(\widetilde{u}_{R_{\widehat{t}}}\left(t_i,0\right)\right)\right>.
\end{eqnarray*}
In a next step we show that
\begin{equation}\label{kompl}
\lim\limits_{t\to\infty}\frac{\left<\prod\limits_{i\in T}f_i\left(u_{R_{\widehat{t}}}\left(t_i,0\right)\right)\prod\limits_{j\in T^{\rm c}}f_j\left(\widetilde{u}_{R_{\widehat{t}}}\left(t_i,0\right)\right)\right>}{\left<\prod\limits_{i\in T}f_i\left(u(t_i,0)\right)\prod\limits_{j\in T^{\rm c}} f_j\left(u(t_j,0)\right)\right>}=0.
\end{equation}
For simplicity we only look at $\lim_{t\to\infty}\left<f_1\big(\widetilde{u}(t,0)\big)\right>/\left<f_1\big(u(t,0)\big)\right>$ which may easily be generalised. Recall that $f_1$ is regularly varying, so it can be written as $f_1(x)=x^\beta L(x)$, for some $\beta>0$ and some slowly varying function $L$. By forcing the random walk in the Feynman-Kac formula to stay in the origin up to time $t$ we find that
\[\left<f_1\big(u(t,0)\big)\right>=\left<u(t,0)^\beta L\big(u(t,0)\big)\right>\geq\left<\exp\{\beta t\xi(0)-2d\kappa \beta t\} L\big(u(t,0)\big)\right>.
\]
Furthermore, together with [GM98, Lemma 2.5] we find that 
\begin{align*}
& \qquad\left<f_1\big(\widetilde{u}(t,0)\big)\right>\\
&\leq\left<\bigg(\exp\{t\xi^{(1)}_{ Q_{R_{t}}}\}\mathbb{P}_0\Big(\tau_{ Q_{R_t}^{{\rm c}}}\leq t\Big)\bigg)^\beta L\big(\widetilde{u}(t,0)\big)\right>\\
&=\left<\bigg(\sum\limits_{x\in Q_{R_{t}}}\exp\{t\xi(x)\}\mathbbm{1}_{\xi(x)=\xi^{(1)}_{ Q_{R_{t}}}}\mathbb{P}_0\Big(\tau_{ Q_{R_t}^{{\rm c}}}\leq t\Big)\bigg)^\beta L\big(\widetilde{u}(t,0)\big)\right>\\
&\leq \max(1,|Q_{R_t}|^{\beta-1})\sum\limits_{x\in Q_{R_{t}}}\left<\bigg(\exp\{t\xi(x)\}2^{d+1}\exp\left\{-R_{t}\log\frac{R_{t}}{\kappa dt}+ nR_{t}\right\}\bigg)^\beta L\big(\widetilde{u}(t,0)\big)\right>\\
&= 2^{(d+1)\beta}\exp\left\{-\beta R_{t}\log\frac{R_{t}}{\kappa dt}+o\left(R_{t}\log\frac{R_{t}}{\kappa d t}\right)\right\}\left<\exp\{\beta t\xi(0)\}L\big(\widetilde{u}(t,0)\big)\right>.\\
\end{align*}
In the third line we use that (due to Jensen's inequality for $\beta>1$) for any real numbers $a_1, \dots ,a_n$, and $\beta>0$,
\begin{align*}
 \bigg(\sum\limits_{k=1}^n a_k\bigg)^\beta \leq \begin{cases}
                                                  n^{\beta-1}\sum\limits_{k=1}^n |a_k| ^\beta,\quad&\text{if }\beta\geq1,\\
\sum\limits_{k=1}^n |a_k| ^\beta,\quad&\text{if }\beta\leq1.
                                                 \end{cases}
\end{align*}

Altogether, this proves \eqref{kompl} since $\widetilde{u}\leq u$ for all $t$ by definition and $L$ is bounded away from zero and infinity or non-decreasing. Therefore, we can conclude that
\[\text{rhs of \eqref{tauk}}-\left<\prod\limits_{i=1}^p f_i\left(u_{R_{\widehat{t}}}\left(t_i,0\right)\right)\right>=o\left(\left<\prod\limits_{i=1}^p f_i\big(u\left(t_i,0\right)\big)\right>\right),\qquad t\rightarrow\infty.\]
Now the claim follows because $\theta$ can be chosen arbitrarily close to 1 and because
\[ 
\left<\prod_{i=1}^p f_i\left(u_{R_{\widehat{t}}}\left(t_i,0\right)\right)\right>\leq\left<\prod_{i=1}^p f_i\left(u\left(t_i,0\right)\right)\right>,\qquad t\geq0,\]
is true by the monotonicity and the nonnegativity of $f_1,\dots,f_p$, and because $u_{R_t}\leq u$ for all $t$.

\end{proof}

The next lemma allows us to consider only those realisations of the potential where the highest potential peak is significantly higher than the second one. Let $\xi^{(1)}_{R_{\widehat{t}}}\geq\xi^{(2)}_{R_{\widehat{t}}}\geq\dots$ be an order statistics of the potential.

\begin{lemma}\label{asympu}
For all $c>0$ and $f_1,\dots,f_p\in\mathcal{F}$, $t_1,\dots,t_p\in\mathcal{T}$ satisfying \eqref{wachs},
\[\bigg<\prod_{i=1}^p f_i\left(u_{R_{\widehat{t}}}^{\rm f}\left(t_i,0\right)\right)\mathbbm{1}_{\xi^{(1)}_{R_{\widehat{t}}}-\xi^{(2)}_{R_{\widehat{t}}}\leq c}\bigg>=o\bigg(\bigg<\prod_{i=1}^p f_i\left(u_{R_{\widehat{t}}}^{\rm f}\left(t_i,0\right)\right)\bigg>\bigg),\qquad t\rightarrow\infty.\]
\end{lemma}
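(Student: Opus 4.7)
The plan is to dominate both sides by Stieltjes-type integrals over the level $h=\xi_{R_{\widehat t}}^{(1)}$ and to compare them. From the Feynman--Kac representation, every trajectory that stays inside $Q_{R_{\widehat t}}$ picks up an integrand bounded by $\xi_{R_{\widehat t}}^{(1)}$, so $u_{R_{\widehat t}}^{\rm f}(t_i,0)\le e^{t_i\xi_{R_{\widehat t}}^{(1)}}$. Together with monotonicity of each $f_i\in\mathcal{F}$, this reduces the left-hand side of the lemma to controlling
\[\bigl\langle G(\xi_{R_{\widehat t}}^{(1)})\,\mathbbm 1_{\xi_{R_{\widehat t}}^{(1)}-\xi_{R_{\widehat t}}^{(2)}\le c}\bigr\rangle,\qquad G(h):=\prod_{i=1}^p f_i(e^{t_ih}).\]
For the denominator, freezing the Feynman--Kac walk at the origin yields $u_{R_{\widehat t}}^{\rm f}(t_i,0)\ge e^{t_i(\xi(0)-2d\kappa)}$, hence the lower bound $\bigl\langle\tilde G(\xi(0))\bigr\rangle$ with $\tilde G(h):=\prod_i f_i(e^{t_i(h-2d\kappa)})$.

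The next step is the layer-cake identity $\langle G(Y)\mathbbm 1_A\rangle=G(0)\mathbf P(A)+\int_0^\infty G'(h)\mathbf P(Y>h,A)\,dh$, applied above with $Y=\xi_{R_{\widehat t}}^{(1)}$ and $A=\{\xi_{R_{\widehat t}}^{(1)}-\xi_{R_{\widehat t}}^{(2)}\le c\}$, and below with $Y=\xi(0)$, $A=\Omega$. The boundary term $G(0)=\prod_i f_i(1)$ is a bounded constant in $t$, hence negligible against a denominator that diverges by \eqref{wachs}. The key estimate is a union bound over ordered pairs of distinct sites in $Q_{R_{\widehat t}}$,
\[\mathbf P\bigl(\xi_{R_{\widehat t}}^{(1)}>h,\ \xi_{R_{\widehat t}}^{(2)}>h-c\bigr)\le|Q_{R_{\widehat t}}|^2\,\bar F(h)\bar F(h-c)\le|Q_{R_{\widehat t}}|^2\,\bar F(h-c)^2,\]
combined with the consequence of Assumption~(F) that, via $\{\xi(0),\xi(1)>h-c\}\subset\{(\xi(0)+\xi(1))/2>h-c\}$ and independence, one has $\bar F(h-c)^2=o(\bar F(h))$ as $h\to\infty$. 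Regular variation of $f_i$ moreover identifies $G'(h)$ with $\tilde G'(h)$ up to an exponential prefactor in $t$, which is absorbed in the denominator by \eqref{wachs}.

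Putting these pieces together, the integrand ratio at height $h$ is bounded by a product of $|Q_{R_{\widehat t}}|^2\bar F(h-c)^2/\bar F(h)$ and an exponential prefactor in $t$ that is absorbed in the denominator. A Laplace-type localisation of both integrals around the optimising scale $h\asymp h_{\widehat t}$ then concludes the proof. The main technical hurdle is that the ``$o$'' in Assumption~(F) is only pointwise in $h$, while the box size satisfies $|Q_{R_{\widehat t}}|^2=O\bigl((\widehat t\log^2\widehat t)^{2d}\bigr)$ and thus grows polynomially in $\widehat t$. I would handle this by splitting the range of integration into a slowly growing low-$h$ region (handled by a crude uniform bound together with the trivial monotonicity of $\bar F$ and Lemma~\ref{trivial}) and a high-$h$ region, on which Assumption~(F) becomes effective uniformly and the polynomial prefactor is readily absorbed.
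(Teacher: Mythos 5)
Your outline is close in spirit to the paper's proof — Feynman--Kac sandwiching, a union bound over pairs of sites, an application of Assumption~(F), and a low-$h$ / high-$h$ split — but the step you flag as ``the main technical hurdle'' is exactly where the argument as written does not close, and the paper's way out is different from what you propose.

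The issue is quantitative. After the union bound you are left with a prefactor $|Q_{R_{\widehat t}}|^2\asymp(\widehat t\log^2\widehat t)^{2d}$, which grows polynomially in $t$, and you want to beat it with the $o(1)$ from Assumption~(F). But that $o(1)$ is unquantified: for a fixed threshold $h_0$ you get $\bar F(h-c)^2\le\delta\,\bar F(h+2d\kappa)$ on $[h_0,\infty)$ with $\delta$ depending only on $h_0$, so the high-$h$ part of your ratio is bounded by $|Q_{R_{\widehat t}}|^2\,\delta$, which blows up as $t\to\infty$. To make it vanish you would need $\delta=\delta(t)$ to decay faster than $|Q_{R_{\widehat t}}|^{-2}$, hence $h_0=h_0(t)\to\infty$ at a rate controlled by how fast $\bar F(h-c)^2/\bar F(h)\to 0$; but no rate is assumed, so you lose control of $h_0(t)$ and therefore of the low-$h$ part. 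The ``Laplace-type localisation'' you invoke at the end would need something like a quantitative concentration of the integrals (as in Condition~(B)), which is not available here — Lemma~\ref{asympu} is proved under Assumption~(F) alone, before any such regularity is imposed.

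The paper avoids this by absorbing $|Q_{R_{\widehat t}}|^2$ \emph{before} splitting, using regular variation of the $f_i$ rather than a decay rate of $\bar F$. Concretely: on the event $\xi(x)>\xi(y)\ge\xi(x)-c$ one has $\xi(x)\le\tfrac12(\xi(x)+\xi(y)+c)$, so after the union bound the relevant quantity is $|Q_{R_{\widehat t}}|^2\bigl\langle\prod_i f_i\bigl(e^{\frac{t_i}{2}(\xi(x_0)+\xi(x_1)+c)}\bigr)\bigr\rangle$. Writing $f_i(x)=x^{\beta_i}L_i(x)$ and replacing $c$ by $2c$ via $e^{\frac{t_i}{2}(\xi+\xi+c)}=e^{-\frac{t_i}{2}c}\,e^{\frac{t_i}{2}(\xi+\xi+2c)}$ produces an explicit factor $e^{-\beta_i\frac{t_i}{2}c+2d\log(\widehat t\log^2\widehat t)}$ times a ratio of slowly varying functions, and this tends to zero since $t_i$ grows faster than $\log\widehat t$. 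Only after $|Q_{R_{\widehat t}}|^2$ has been eliminated in this deterministic way does the paper split the resulting integral at a \emph{fixed} $h_0=h_0(\delta)$, using Assumption~(F) for $h\ge h_0$ and the crude bound $h_0\prod_i f_i(e^{t_ih_0})$ together with \eqref{wachs} for $h<h_0$. You should adopt this regular-variation manoeuvre (or an equivalent device giving exponential-in-$t$ gain) rather than trying to make the polynomial prefactor disappear into the split.
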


\begin{proof}
Let $x_0,x_1\in Q_{R_{\widehat{t}}}$ be two arbitrarily chosen points. Then
 \begin{eqnarray*}
\bigg<\prod_{i=1}^p f_i\left(u_{R_{\widehat{t}}}^{\rm f}\left(t_i,0\right)\right)\mathbbm{1}_{\xi^{(1)}_{R_{\widehat{t}}}-\xi^{(2)}_{R_{\widehat{t}}}\leq c}\bigg>
&\leq& \bigg<\prod_{i=1}^p f_i\left({\rm e}^{t_i\xi^{(1)}_{R_{\widehat{t}}}}\right)\mathbbm{1}_{\xi^{(1)}_{R_{\widehat{t}}}-\xi^{(2)}_{R_{\widehat{t}}}\leq c}\bigg>\\
&\leq&\sum\limits_{x\in Q_{R_{\widehat{t}}}}\sum\limits_{y\in Q_{R_{\widehat{t}}}\setminus\{x\}}\bigg<\prod_{i=1}^p f_i\left({\rm e}^{t_i\xi(x)}\right)\mathbbm{1}_{\xi(x)>\xi(y)\geq \xi(x)-c}\bigg>\\
&\leq&| Q_{{R_{\widehat{t}}}}|^2\bigg<\prod_{i=1}^p f_i\left({\rm e}^{\frac{t_i}{2}\left(\xi(x_0)+\xi(x_1)+c\right)}\right)\bigg>
\end{eqnarray*}
Since all $f_i$ are regularly varying, they can be written as $f_i(x)=x^{\beta_i} L_i(x)$, where $L_1,\dots, L_p$ are slowly varying functions and $\beta_1,\dots,\beta_i>0$. Therefore, we find that
 \begin{eqnarray*}
 && | Q_{{R_{\widehat{t}}}}|^2\bigg<\prod_{i=1}^p f_i\left({\rm e}^{\frac{t_i}{2}\left(\xi(x_0)+\xi(x_1)+c\right)}\right)\bigg>\\
&=&\bigg<\prod_{i=1}^p{\rm e}^{\beta_i\frac{t_i}{2}\left(\xi(x_0)+\xi(x_1)+2c\right)+2d\log (\widehat{t}\log^2\widehat{t})}{\rm e}^{-\beta_i\frac{t_i}{2}c}L\left({\rm e}^{-\frac{t_i}{2}c}{\rm e}^{\frac{t_i}{2}\left(\xi(x_0)+\xi(x_1)+2c\right)}\right)\bigg>\\
&=& \bigg<\prod_{i=1}^p\underbrace{{\rm e}^{-\beta_i\frac{t_i}{2}c+2d\log (\widehat{t}\log^2\widehat{t})}\frac{L_i\left({\rm e}^{-\frac{t_i}{2}c}{\rm e}^{\frac{t_i}{2}\left(\xi(x_0)+\xi(x_1)+2c\right)}\right)}{L_i\left({\rm e}^{\frac{t_i}{2}\left(\xi(x_0)+\xi(x_1)+2c\right)}\right)}}_{\stackrel{t\to\infty}{\longrightarrow}0} f_i\left({\rm e}^{\frac{t_i}{2}\left(\xi(x_0)+\xi(x_1)+2c\right)}\right)\bigg>.
 \end{eqnarray*}
Assumption (F) states for $c>2d\kappa$:
\[\text {For all }\delta>0\text{ it exists }h_0=h_0(\delta)>0\text{ such that for all } h>h_0\colon\]
\[\mathbf{P}\left(\frac{\xi(x_0)+\xi(x_1)}{2}>h-c\right)\leq\delta\mathbf{P}\left(\xi(0)>h+2d\kappa\right).\]
Furthermore, it follows with Lemma~\ref{trivial} ii) that
\[\bigg<\prod\limits_{i=1}^p f_i\left(u_{R_{\widehat{t}}}^{\rm f}\left(t_i,0\right)\right)\bigg>\geq\int\limits_{0}^\infty \prod\limits_{i=1}^p f_i\left({\rm e}^{t_ih}\right)\mathbf{P}\left(\xi(0)>h+2d\kappa\right)\,{\rm d}h,\]
and therefore, since all $f_i$ are nonnegative and increasing, and because of \eqref{wachs},
\begin{eqnarray*}
\frac{\Big<\prod\limits_{i=1}^p f_i\left({\rm e}^{\frac{t_i}{2}\left(\xi(x_0)+\xi(x_1)+2c\right)}\right)\Big>}{\Big<\prod\limits_{i=1}^p f_i\left(u_{R_{\widehat{t}}}^{\rm f}\left(t_i,0\right)\right)\Big>}
&\leq&\frac{\int\limits_{0}^{h_0} \prod\limits_{i=1}^p f_i\left({\rm e}^{t_ih}\right)\mathbf{P}\left(\frac{\xi(x_0)+\xi(x_1)}{2}>h-c\right)\,{\rm d}h}{\Big<\prod\limits_{i=1}^p f_i\left(u_{R_{\widehat{t}}}^{\rm f}\left(t_i,0\right)\right)\Big>}+\delta\\
&\leq&\frac{h_0\prod\limits_{i=1}^p f_i\left({\rm e}^{t_ih_0}\right)}{\Big<\prod\limits_{i=1}^p f_i\left(u_{R_{\widehat{t}}}^{\rm f}\left(t_i,0\right)\right)\Big>}+\delta\stackrel{t\rightarrow\infty,\delta\rightarrow0}{\longrightarrow}0.
\end{eqnarray*}
%\qed
\end{proof}
\noindent
Now we prove that the first eigenfunction $e_{1}^{R,*}$ decays at least exponentially fast. To this end we give probabilistic representations for $e_{1}^{R,*}$. 

\begin{lemma}\label{problamb}
 Let $\xi^{(1)}_R-\xi^{(2)}_R=c>2d\kappa$ and $\xi^{(1)}_R=\xi(0)$. Then 
\[e^{R,*}_1(x)=e^{*}_0\mathbb{E}_{x}^{R,*}\exp\bigg\{\int\limits_{0}^{\tau_0}\left(\xi\left(X_{s}\right)-\lambda^{R,*}_{1}(\xi)\right)\,{\rm d}s\bigg\},\qquad x\in\mathbb{Z}^d,\]
where $e^*_0=e^*_0(R)$ is a normalising constant.
\end{lemma}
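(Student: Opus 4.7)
The plan is to establish the identity by a standard Feynman--Kac/martingale argument, using the eigenvalue equation together with the gap assumption $c>2d\kappa$ to secure integrability. Rewrite $\mathscr{H}_R^*e_1^{R,*}=\lambda_1^{R,*}e_1^{R,*}$ in the form
\[(\kappa\Delta_R^*+\xi-\lambda_1^{R,*})e_1^{R,*}\equiv 0\quad\text{on } Q_R.\]
By Perron--Frobenius, $e_1^{R,*}$ may (and will) be taken strictly positive, so $e_0^*:=e_1^{R,*}(0)>0$ is the normalising constant claimed in the lemma. Define the multiplicative functional
\[M_t:=e_1^{R,*}(X_t)\exp\bigg\{\int_0^t\bigl(\xi(X_s)-\lambda_1^{R,*}\bigr)\,{\rm d}s\bigg\}\]
under $\mathbb{P}_x^{R,*}$. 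Applying Dynkin's formula to $e_1^{R,*}$ with generator $\kappa\Delta_R^*$, together with the product rule for pure-jump semimartingales, one sees that the drift of $M_t$ equals the exponential factor times $(\kappa\Delta_R^*+\xi-\lambda_1^{R,*})e_1^{R,*}(X_t)$, which vanishes by the eigenvalue equation. Hence $M_t$ is a local $\mathbb{P}_x^{R,*}$-martingale.

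The next step is the integrability control. For $x\neq 0$ and $s\in[0,\tau_0)$ we have $X_s\neq 0$, so $\xi(X_s)\le\xi_R^{(2)}$, while Lemma~\ref{trivial}\,i) combined with $\xi(0)=\xi_R^{(1)}$ gives $\lambda_1^{R,*}\ge\xi_R^{(1)}-2d\kappa$. Together these yield
\[\xi(X_s)-\lambda_1^{R,*}\;\le\;\xi_R^{(2)}-\xi_R^{(1)}+2d\kappa\;=\;-(c-2d\kappa)\;<\;0\qquad\text{for all } s\in[0,\tau_0).\]
Consequently the exponential factor is bounded by $1$ on $\{s\le\tau_0\}$, and $|M_{t\wedge\tau_0}|\le\|e_1^{R,*}\|_\infty$. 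The stopped process $(M_{t\wedge\tau_0})_{t\ge 0}$ is therefore a genuine bounded martingale.

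Optional stopping at $\tau_0\wedge T$ now gives $e_1^{R,*}(x)=\mathbb{E}_x^{R,*}[M_{\tau_0\wedge T}]$, and letting $T\to\infty$ we pass to the limit by bounded convergence. Under free boundary conditions the walk on $Q_R$ is an irreducible finite-state Markov chain, hence $\tau_0<\infty$ almost surely, and $e_1^{R,*}(X_{\tau_0})=e_0^*$; the claimed formula follows. Under zero boundary conditions $e_1^{R,0}$ is extended by $0$ outside $Q_R$, so trajectories exiting $Q_R$ before hitting the origin contribute $0$ to $M_{\tau_0}$, which is precisely the usual identification of the zero-boundary expectation with expectation for the walk killed on $Q_R^{\rm c}$; the same identity results. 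The case $x=0$ is immediate since $\tau_0=0$.

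The main obstacle I anticipate is not the martingale computation itself but making the optional stopping limit clean: one has to verify that $(M_{t\wedge\tau_0})$ is uniformly integrable and that the zero-boundary walk is correctly handled via the extension-by-zero convention. Both points hinge entirely on the quantitative gap $c>2d\kappa$, which turns $\xi(X_s)-\lambda_1^{R,*}$ into a strictly negative quantity off the origin and thereby makes the exponential a contraction; without that gap the Feynman--Kac expectation could be infinite and the representation would fail.
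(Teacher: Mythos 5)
Your proof is correct and takes essentially the same route as the paper: the paper rewrites the eigenvalue equation as a boundary value problem on $Q_R\setminus\{0\}$ and invokes the Feynman--Kac representation (justified by the finiteness of the state space and the sign condition from $c>2d\kappa$), while you simply carry out the standard martingale/optional-stopping argument that underlies that representation. Your explicit verification of boundedness of the stopped martingale via the gap $\xi(X_s)-\lambda_1^{R,*}\le -(c-2d\kappa)<0$ and the extension-by-zero treatment of the Dirichlet case match the paper's reasoning exactly.
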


\begin{proof}
$i)$ $*={\rm f}$. The eigenvalue equation for $\lambda^{R,{\rm f}}_{1}(\xi)$ may be rewritten as
\begin{eqnarray*}
\begin{cases}
&\kappa \Delta e^{R,{\rm f}}_1(x)+(\xi(x)-\lambda^{R,{\rm f}}_{1})e^{R,{\rm f}}_1(x)=0,
 \qquad x\in Q_{R}\backslash\left\{0\right\}, \\ 
 &e^{R,{\rm f}}_1(0)=e^{\rm f}_0.
\end{cases}
\end{eqnarray*}
Since we are working on a finite state space, we know that $\mathbb{E}^{R,{\rm f}}_x\tau_{0}<\infty$ and because $c>2d\kappa$ it also follows that $\xi(x)-\lambda^{R,{\rm f}}_{1}(\xi)<0$ for all $x\in Q_R\setminus\left\{0\right\}$, and hence the Feynman-Kac representation of this boundary problem is given by
\[e^{R,{\rm f}}_1(x)=e^{\rm f}_0\mathbb{E}_{x}^{R,{\rm f}}\exp\Bigg\{\int\limits_{0}^{\tau_0}\left(\xi\left(X_{s}\right)-\lambda^{R,{\rm f}}_{1}(\xi)\right)\,{\rm d}s\Bigg\}.\]
$ii)$ $*=0$. Analogously, the eigenvalue equation for $\lambda^{R,0}_{1}(\xi)$ may be rewritten as
\begin{eqnarray*}
\begin{cases}
&\kappa \Delta e^{R,0}_1(x)+(\xi(x)-\lambda^{R,0}_{1})e^{R,0}_1(x)=0,
 \qquad x\in Q_{R}\backslash\left\{0\right\}, \\ 
 &e^{R,0}_1(0)=e^0_0,\qquad e^0_1(x)=0,\quad x\notin Q_{R}.
\end{cases}
\end{eqnarray*}
Notice that
\[\mathbb{E}_{x}^{R,0}\exp\bigg\{\int\limits_{0}^{\tau_0}\left(\xi\left(X_{s}\right)-\lambda^{R,0}_{1}(\xi)\right)\,{\rm d}s\bigg\}=\mathbb{E}_{x}\exp\bigg\{\int\limits_{0}^{\tau_0}\left(\xi\left(X_{s}\right)-\lambda^{R,0}_{1}(\xi)\right)\,{\rm d}s\bigg\}\mathbbm{1}_{\tau_{0}<\tau_{\left( Q_{R}\right)^{\rm c}}}.\]
Thus, it admits the desired probabilistic representation.
\end{proof}
\noindent
Let
\[\sigma_1:=\inf\left\{t>0\colon X_{t}\neq X_{0}\right\}\]
be the time of the first jump of $X$. The stopping time $\sigma_1$ is exponentially distributed with parameter $2d\kappa$. Now we define recursively
\[\sigma_n:=\inf\left\{t>0\colon X_{t+\sigma_{n-1}}\neq X_{\sigma_{n-1}}\right\}.\]
The sequence $\left\{\sigma_n,n\in\mathbb{N}\right\}$ is i.i.d. by the definition of $X$.

\begin{lemma}\label{eigenexp}
It exists $K=K(c)>0$ such that if $\xi(0)-\widehat{\xi}_R>c>2d\kappa$ then $e_{1}^{R,*}(x)\leq K{\rm e}^{-|x|\log\frac{c}{2d\kappa}}$ for all $x\in Q_R$.
\end{lemma}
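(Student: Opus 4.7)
The plan is to feed the Feynman--Kac representation of $e_1^{R,*}$ furnished by Lemma~\ref{problamb} into an elementary exponential moment bound on the hitting time $\tau_0$ of the origin. First I would combine the hypothesis $\xi(0)-\widehat{\xi}_R>c>2d\kappa$ with the lower sandwich $\lambda_1^{R,*}(\xi)\geq\xi_R^{(1)}-2d\kappa=\xi(0)-2d\kappa$ from Lemma~\ref{trivial}~i) to conclude that, for every $y\in Q_R\setminus\{0\}$,
\[
\xi(y)-\lambda_1^{R,*}(\xi)\leq \widehat{\xi}_R-\xi(0)+2d\kappa<-\mu,\qquad \mu:=c-2d\kappa>0.
\]
On the event $\{s<\tau_0\}$ the walk has $X_s\neq 0$, so the above pointwise bound integrates to
\[
\int_0^{\tau_0}\bigl(\xi(X_s)-\lambda_1^{R,*}(\xi)\bigr)\,{\rm d}s\leq -\mu\tau_0,
\]
and Lemma~\ref{problamb} delivers $e_1^{R,*}(x)\leq e_0^{*}\,\mathbb{E}_x^{R,*}[{\rm e}^{-\mu\tau_0}]$ for every $x\in Q_R$.

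The second step is to produce an exponential-in-$|x|$ bound for the Laplace transform of $\tau_0$. To reach $0$ from $x$ the walk must undergo at least $|x|_1$ successive site changes, and in both boundary regimes the rate at which the continuous-time walk changes site is at most $2d\kappa$: for $*={\rm f}$ the leaving rate at a site is $\kappa\deg_{Q_R}(x)\leq 2d\kappa$, and for $*=0$ one has $\mathbb{E}_x^{R,0}[{\rm e}^{-\mu\tau_0}]=\mathbb{E}_x[{\rm e}^{-\mu\tau_0}\mathbbm{1}_{\tau_0<\tau_{Q_R^{\rm c}}}]\leq\mathbb{E}_x[{\rm e}^{-\mu\tau_0}]$. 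Consequently the waiting times between genuine displacements stochastically dominate i.i.d.\ ${\rm Exp}(2d\kappa)$ variables $\sigma_1,\dots,\sigma_{|x|_1}$, and $\tau_0\geq_{\rm st}\sigma_1+\cdots+\sigma_{|x|_1}$ yields
\[
\mathbb{E}_x^{R,*}\!\bigl[{\rm e}^{-\mu\tau_0}\bigr]\leq \Bigl(\frac{2d\kappa}{2d\kappa+\mu}\Bigr)^{|x|_1}=\Bigl(\frac{2d\kappa}{c}\Bigr)^{|x|_1}=\exp\Bigl\{-|x|_1\log\frac{c}{2d\kappa}\Bigr\}.
\]

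Since $e_1^{R,*}$ is the positive $\ell^2$-normalised principal eigenfunction, each value is at most $1$, so $e_0^{*}=e_1^{R,*}(0)\leq 1$ and the claim follows with $K=1$ (the statement's freedom $K=K(c)$ simply absorbs any looser bookkeeping of the prefactor). The only genuinely delicate point is the stochastic domination of $\tau_0$ by a sum of i.i.d.\ exponentials: one must check that the ``self-jumps'' of the free-boundary walk near $\partial Q_R$, or equivalently the reduced jump rate $\kappa\deg_{Q_R}(\cdot)$, do not accelerate the hit of the origin. This is transparent because such features can only lengthen the holding times between actual displacements, and a clean way to see it is to replace the jump skeleton of $X^{R,*}$ by the simple-random-walk skeleton with i.i.d.\ ${\rm Exp}(2d\kappa)$ step times, which has a pathwise earlier arrival at $0$.
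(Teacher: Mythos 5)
Your proposal is correct and follows essentially the same route as the paper: plug the Feynman--Kac representation of $e_1^{R,*}$ from Lemma~\ref{problamb} into the bound $\xi(X_s)-\lambda_1^{R,*}\leq 2d\kappa-c$ on $\{s<\tau_0\}$ to reduce everything to $\mathbb{E}_x^{R,*}[{\rm e}^{-(c-2d\kappa)\tau_0}]$, and then exploit that at least $|x|_1$ jumps are needed to reach the origin while each holding time has rate at most $2d\kappa$. The only (purely cosmetic) differences are that you keep the indicator $\{N=n\}$ inside the expectation rather than summing a geometric series over $n\geq|x|$, which sharpens the constant from $K=(1-2d\kappa/c)^{-1}$ to $K=1$, and you make explicit the observation $e_0^*=e_1^{R,*}(0)\leq 1$ from the $\ell^2$-normalisation (the paper leaves this implicit in the formulation $K=K(c)$). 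One small caution on phrasing: you cannot literally ``replace the jump skeleton of $X^{R,*}$ by the simple-random-walk skeleton,'' since the embedded chains genuinely differ near $\partial Q_R$; what the argument actually uses (and what you correctly identify as the two invariant facts) is only that the number of genuine displacements to reach $0$ is at least $|x|_1$ and that each holding time stochastically dominates an ${\rm Exp}(2d\kappa)$ variable, conditionally on the jump chain.
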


\begin{proof}
Using the probabilistic representation of $e_{1}^{R,*}$ from Lemma~\ref{problamb} and because of Lemma~\ref{trivial} we find that
\begin{eqnarray*}
 \frac{e_{1}^{R,*}(x)}{e_0^*}&=&\mathbbm{E}^{R,*}_{x}\exp\Bigg\{\int\limits_{0}^{\tau_{0}}\left(\xi(X_{s})-\lambda^{R,*}_1(\xi)\right)\,{\rm d}s\Bigg\}\\
&\leq&\mathbbm{E}^{R,*}_{x}\exp\left\{\tau_{0}(2d\kappa-c)\right\}\\
&=&\sum\limits_{n=1}^{\infty}\mathbbm{E}^{R,*}_{x}\exp\left\{(2d\kappa-c)\sum\limits_{k=1}^{n}\sigma_{k}\right\}\mathbbm{1}_{\tau_0=\sum\limits_{k=1}^{n}\sigma_{k}}\\%\mathbb{P}_x^X\left(\tau_0=\sum\limits_{k=1}^{n}\sigma_{k}\right)\\
&\leq&\sum\limits_{n=|x|}^{\infty}\left(\frac{2d\kappa}{c}\right)^n=\frac{1}{1-\frac{2d\kappa}{c}}{\rm e}^{-|x|\log\frac{c}{2d\kappa}}.
\end{eqnarray*}
\end{proof}
\noindent
Now we give two facts about regularly varying functions.
\begin{lemma}\label{regular}
 Let $f\in\mathcal{R}_\beta$ be an increasing function and $g\in C$ such that $\lim\limits_{t\rightarrow\infty}g(t)=\infty$. 
\begin{enumerate}
\item  If $\beta>0$, then for any $c>0$ there exists $C(c)$ with $\lim\limits_{c\rightarrow\infty}C(c)=1$ such that \[\sum\limits_{x\in\mathbb{
Z}^d}f\left({\rm e}^{g(t)-c|x|}\right)\sim C(c)f({\rm e}^{g(t)}),\qquad t\rightarrow\infty.\]
\item For every $h\colon\R\to\R$ with $\lim\limits_{t\rightarrow\infty}h(t)=1\colon$ $\lim\limits_{t\rightarrow\infty}\frac{f({\rm e}^{g(t)})}{f\left({\rm e}^{g(t)}h(t)\right)}=1$.
\end{enumerate}
\end{lemma}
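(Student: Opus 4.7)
\begin{proof2}[Proof plan for Lemma~\ref{regular}]
The idea is to write $f(x)=x^\beta L(x)$ with $L$ slowly varying, and reduce both parts to the uniform convergence theorem (UCT) for slowly varying functions together with dominated convergence. The structural assumption that $L$ is either non-decreasing or bounded away from $0$ and $\infty$ (built into the definition of $\mathcal{R}_+$ used in this paper) will supply the $t$-uniform dominating function needed for the summation in~i).

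For part ii), I would argue directly: factoring out the power, one has
\[
\frac{f(e^{g(t)})}{f(e^{g(t)}h(t))}=h(t)^{-\beta}\cdot\frac{L(e^{g(t)})}{L(e^{g(t)}h(t))}.
\]
Since $h(t)\to 1$, fix any compact interval $[a,b]\subset(0,\infty)$ containing $1$ in its interior, so that $h(t)\in[a,b]$ for large $t$. Because $e^{g(t)}\to\infty$, the UCT for slowly varying functions (Bingham--Goldie--Teugels, Thm 1.2.1) gives $L(\lambda e^{g(t)})/L(e^{g(t)})\to 1$ uniformly in $\lambda\in[a,b]$. Applied at $\lambda=h(t)$ this yields the slowly-varying factor tends to $1$, while $h(t)^{-\beta}\to 1$ trivially. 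This part is routine.

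For part i), the plan is to bound the summands pointwise using UCT and then apply dominated convergence in the counting measure on $\Z^d$. For each fixed $x\in\Z^d$, set $\lambda_x:=e^{-c|x|}\in(0,1]$, a constant in $t$. Then
\[
\frac{f(e^{g(t)-c|x|})}{f(e^{g(t)})}=\lambda_x^{\beta}\cdot\frac{L(\lambda_x e^{g(t)})}{L(e^{g(t)})}\xrightarrow[t\to\infty]{}e^{-\beta c|x|}
\]
by UCT applied at the fixed point $\lambda=\lambda_x$. To dominate uniformly in $t$, I would split into the two allowed cases on $L$. If $L$ is bounded, say $0<a\le L\le b<\infty$, the ratio is dominated by $(b/a)e^{-\beta c|x|}$. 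If $L$ is non-decreasing, then since $e^{g(t)-c|x|}\le e^{g(t)}$ we have $L(e^{g(t)-c|x|})\le L(e^{g(t)})$, so the ratio is dominated by $e^{-\beta c|x|}$. In either case the dominating sequence is summable on $\Z^d$, so dominated convergence yields
\[
\sum_{x\in\Z^d}\frac{f(e^{g(t)-c|x|})}{f(e^{g(t)})}\xrightarrow[t\to\infty]{}\sum_{x\in\Z^d}e^{-\beta c|x|}=:C(c).
\]
Finally, $C(c)-1=\sum_{x\ne 0}e^{-\beta c|x|}\le\sum_{k\ge 1}(2k+1)^d e^{-\beta c k}\to 0$ as $c\to\infty$, which gives $C(c)\to 1$.

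The only genuine obstacle is the dominating function for the sum in i): without some control on $L$ for arguments far smaller than $e^{g(t)}$, Potter-type bounds alone would only handle a bounded range of $|x|$, and one needs the alternative hypothesis (bounded or monotone $L$) precisely to close the tail of the sum uniformly in $t$. Everything else is a direct application of UCT.
\end{proof2}
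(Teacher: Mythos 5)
Your proof is correct and takes essentially the same route as the paper, which also reduces part~i) to the uniform convergence theorem for regularly varying functions (BGT Theorem~1.5.2) applied termwise to $\sum_{x}f(e^{g(t)}e^{-c|x|})/f(e^{g(t)})$. You go slightly further than the paper by making the interchange of limit and infinite sum explicit via dominated convergence, correctly pinpointing that the restriction to non-decreasing or bounded slowly varying parts built into $\mathcal{R}_\beta$ is exactly what supplies the $t$-uniform summable majorant; the paper leaves this step implicit in its citation of the UCT.
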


\begin{proof}
$i)$ It follows from the uniform convergence theorem for regularly varying functions (see~\cite[Theorem 1.5.2]{BGT87}) that
\begin{eqnarray*}
 \sum\limits_{x\in\mathbb{Z}^d}f\left({\rm e}^{g(t)-c|x|}\right)&=&
f\left({\rm e}^{g(t)}\right)\sum\limits_{x\in\mathbb{Z}^d}\frac{f\left({\rm e}^{g(t)}{\rm e}^{-c|x|}\right)}{f\left({\rm e}^{g(t)}\right)}\\
&\sim&f\left({\rm e}^{g(t)}\right)\sum\limits_{x\in\mathbb{Z}^d}{\rm e}^{-\beta c|x|}
= C(c)f({\rm e}^{g(t)}),\qquad t\rightarrow\infty.
\end{eqnarray*}
$ii)$ follows similarly.
\end{proof}
\noindent
Next we show that we can neglect all eigenvalues but the principal one.
To this end we cut off a time-independent centred inner box $Q_{R}$ from the time-dependent box $Q_{R_t}$ and put free boundary conditions on the inner as well as on the outer side of the boundary of $Q_{R}$. The principal eigenvalue of the outer box will be denoted by $\lambda^{R_t\setminus R,{\rm f}}_1$. Under these circumstances it holds almost surely:
\begin{equation}\label{kirsch}
\max\left(\lambda^{R,{\rm f}}_1,\lambda^{R_t\setminus R,{\rm f}}_1\right)\geq\lambda^{R_t,*}_1\geq \lambda^{R,0}_1.
\end{equation}
The lower bound for $\lambda^{R_t,*}_1$ follows immediately from the Rayleigh-Ritz formula. For the upper bound see for instance \cite[Chapter 5.2]{K07}.
Recall that $\lambda_1^{R,*}>\lambda_2^{R,*}\geq\cdots\geq\lambda^{R,*}_{| Q_{R}|}$ are the eigenvalues of $\mathscr{H}_R^*$, and that the solution $u^*_R$ admits the following spectral representation 
\begin{equation}\label{Spectral}
 u^*_R(t,x)=\sum\limits_{k=1}^{| Q_{R}|} {\rm e}^{\lambda_k^{R,*} t}\left(e_k^{R,*},\mathbbm{1}\right)e_k^{R,*}(x),
\end{equation}
where $e_1^{R,*},e_2^{R,*},\cdots,e_{|Q_R|}^{R,*}$ is a corresponding orthonormal basis of $\ell^2$ eigenfunctions.

\begin{proposition}\label{lowup} 
Let $f_1,\cdots,f_p\in\mathcal{F}$, $t_1,\cdots,t_p\in\mathcal{T}$, \eqref{wachs} be satisfied, and fix $R>0$. Then as $t\to\infty$,
\begin{eqnarray*}
&&\frac{1+o(1)}{| Q_{R}|}\bigg<\prod_{i=1}^p f_i\left({\rm e}^{\lambda^{R,0}_{1}t_i(t)}\right)\Big|\xi^{(1)}_R=\xi(0)\bigg>\\
&\leq & \bigg<\prod_{i=1}^p f_i\left(u_{R_{\widehat{t}(t)}}(t_i(t),0)\right)\bigg>\\
&\leq& \frac{1+o(1)}{| Q_{R_{\widehat{t}(t)}}|}\bigg<\prod_{i=1}^p f_i\left({\rm e}^{\lambda^{R,{\rm f}}_{1}t_i(t)}\right)\Big|\xi^{(1)}_{R_t}=\xi(0)\bigg>.
\end{eqnarray*}
\end{proposition}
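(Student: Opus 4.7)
The strategy is to localize the problem to the event that the highest potential peak in $Q_{R_{\widehat{t}}}$ is significantly separated from all others (via Lemma~\ref{asympu}), then use Lemma~\ref{eigenexp} to identify the principal eigenfunction with an exponentially localized ``spike'' at the peak location, which reduces the Feynman--Kac / spectral sum to a single exponential in the principal eigenvalue.

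For the \emph{lower} bound I would simply drop positive contributions and restrict the expectation to the event $E_0 := \{\xi^{(1)}_R=\xi(0)\} \cap \{\xi^{(1)}_R-\xi^{(2)}_R>c\}$ for a large fixed $c>2d\kappa$. Monotonicity of $u_R^0$ in the box gives $u_{R_{\widehat{t}}}(t_i,0)\geq u_R^0(t_i,0)$. On $E_0$, Lemma~\ref{eigenexp} shows that $e_1^{R,0}$ decays exponentially away from the origin, so both $e_1^{R,0}(0)$ and $(e_1^{R,0},\mathbbm{1})$ are at least $1-\eta(c)$ with $\eta(c)\to 0$; keeping only the principal term in \eqref{Spectral} yields $u_R^0(t_i,0)\geq (1-\eta(c))\,e^{\lambda_1^{R,0}t_i}$. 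Composing with $f_i$ and invoking Lemma~\ref{regular} ii) absorbs the factor $(1-\eta(c))$ into a $1+o(1)$; factoring $P(\xi^{(1)}_R=\xi(0))=1/|Q_R|$ out and removing the gap indicator by a version of Lemma~\ref{asympu} gives the lower bound.

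For the \emph{upper} bound I would start with $u_{R_{\widehat{t}}}\leq u_{R_{\widehat{t}}}^{\rm f}$ and, by Lemma~\ref{asympu}, restrict to $G:=\{\xi^{(1)}_{R_{\widehat{t}}}-\xi^{(2)}_{R_{\widehat{t}}}>c\}$ up to a multiplicative $1+o(1)$. Decomposing $G=\bigsqcup_{y\in Q_{R_{\widehat{t}}}} G_y$ according to the location $y$ of the maximum, I use the spectral expansion \eqref{Spectral} for $u_{R_{\widehat{t}}}^{\rm f}(t_i,0)$ together with Lemma~\ref{eigenexp} to bound $u_{R_{\widehat{t}}}^{\rm f}(t_i,0)\leq K\,e^{-|y|\log(c/2d\kappa)}\,e^{\lambda_1^{R_{\widehat{t}},{\rm f}}t_i}$ on $G_y$. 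On $G_y$, Kirsch's inequality \eqref{kirsch} combined with the trivial bound $\lambda_1^{R_{\widehat{t}}\setminus R,{\rm f}}\leq \xi^{(2)}_{R_{\widehat{t}}}\leq\xi(0)-c<\lambda_1^{R,{\rm f}}$ (for $R$ chosen so that $y\in Q_R$) forces $\lambda_1^{R_{\widehat{t}},{\rm f}}\leq \lambda_1^{R,{\rm f}}$, where now $\lambda_1^{R,{\rm f}}$ refers to a box of fixed radius $R$ centered at $y$. By stationarity of the i.i.d.\ field $\xi$, the distribution of this quantity on $G_y$ is the same as on $G_0$; summing the exponential decay factors $e^{-|y|\log(c/2d\kappa)}$ over $y\in Q_{R_{\widehat{t}}}$ via Lemma~\ref{regular} i) collapses the sum to a single term up to a constant close to one (for large $c$), and identifying $P(\xi^{(1)}_{R_t}=\xi(0))=1/|Q_{R_{\widehat{t}}}|$ reassembles everything into the claimed conditional expectation with the right prefactor.

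The main obstacle is the upper bound, specifically the aggregation step: we must simultaneously handle all possible locations $y$ of the maximum in the time-dependent outer box, control the non-principal eigenvalues of $\mathscr{H}_{R_{\widehat{t}}}^{\rm f}$ through Cauchy--Schwarz in the spectral expansion, and then use stationarity of $\xi$ together with the exponential decay of $e_1^{R_{\widehat{t}},{\rm f}}(0)$ away from the peak (Lemma~\ref{eigenexp}) to trade the outer-box object $\lambda_1^{R_{\widehat{t}},{\rm f}}$ for the fixed-box object $\lambda_1^{R,{\rm f}}$ conditioned on the peak being at the origin. The choice $R_t=t\log^2 t$ is precisely tuned so that the spatial sum over $y$ and the polynomial prefactor $|Q_{R_{\widehat{t}}}|$ are absorbed by the $1+o(1)$ via Lemma~\ref{regular} once \eqref{wachs} is invoked.
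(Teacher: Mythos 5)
Your proposal is essentially the same as the paper's: both localize to the gap event $\{\xi^{(1)}-\xi^{(2)}>c\}$ via Lemma~\ref{asympu}, expand $u$ spectrally via \eqref{Spectral}, use the exponential decay of $e_1$ from Lemma~\ref{eigenexp} together with Lemma~\ref{regular}~i) to make the eigenfunction delta-like and collapse the sum over peak locations, and invoke \eqref{kirsch} to trade the time-dependent box eigenvalue for the fixed-box one, producing the $1/|Q_{R_{\widehat t}}|$ (upper) and $1/|Q_R|$ (lower) prefactors from $\mathbf{P}(\xi^{(1)}=\xi(\cdot))$. The only cosmetic difference is in the lower bound: you control $(e_1,\mathbbm{1})e_1(0)$ via the exponential localization (incurring a $1-\eta(c)$ error that you remove at the end), whereas the paper observes directly that $(e_1,\mathbbm{1})e_1(0)\geq\sum_x e_1(x)^2=1$ once $e_1$ is nonnegative and peaks at the origin — a cleaner bound that costs nothing, though both routes are valid since the non-principal terms are killed by the gap in either case.
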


\begin{proof}
We will restrict to the case $p=1$ and write $\lambda_{k}^{R_t}$ instead of $\lambda_{k}^{R_t,0}$ to keep the notation simple.\\
Upper bound.\\
It follows by Lemma~\ref{asympu} that for every $c>0$,
\[ \left<f_1\big(u_{R_t}(t,0)\big)\right>=\Big<f_1\big(u_{R_t}(t,0)\big)\mathbbm{1}_{\xi^{(1)}_{R_t}-\xi^{(2)}_{R_t}> c}\Big>+o\big(\left<f_1\big(u_{R_t}(t,0)\big)\right>\big),\qquad t\to\infty.\]
Using the spectral representation \eqref{Spectral} of $u_{R_t}$, we find that there exists $C(c)\in[1,\infty)$ such that as $t$ tends to infinity
\begin{eqnarray*}
 &&\bigg<f_1\big(u_{R_t}(t,0)\big)\mathbbm{1}_{\xi^{(1)}_{R_t}-\xi^{(2)}_{R_t}>c}\bigg>\\
&=&\sum\limits_{x\in Q_{R_t}}\bigg<f_1\bigg(\sum\limits_{k=1}^{| Q_{R_t}|}{\rm e}^{\lambda_{k}^{R_t} t}(e_{k},\mathbbm{1})e_{1}(0)\bigg)\mathbbm{1}_{\xi^{(1)}_{R_t}-\xi^{(2)}_{R_t}> c}\mathbbm{1}_{\xi^{(1)}_{R_t}=\xi(x)}\bigg>\\
&=&\frac{1}{| Q_{R_t}|}
\sum\limits_{x\in Q_{R_t}}\bigg<f_1\bigg({\rm e}^{\lambda_{1}^{R_t} t}\Big((e_{1},\mathbbm{1})e_1(0)+\sum\limits_{k=2}^{| Q_{R_t}|}{\rm e}^{(\lambda_{k}^{R_t}-\lambda_{1}^{R_t})t}(e_{k},\mathbbm{1})e_k(0)\Big)\bigg)\mathbbm{1}_{\xi^{(1)}_{R_t}-\xi^{(2)}_{R_t}> c}\Big|\xi^{(1)}_{R_t}=\xi(x)\bigg>\\
&=&\frac{1+o(1)}{| Q_{R_t}|}C(c)\bigg<f_1\left({\rm e}^{\lambda^{R_t}_{1}t}\right)\mathbbm{1}_{\xi^{(1)}_{R_t}-\xi^{(2)}_{R_t}>c}\Big|\xi^{(1)}_{R_t}=\xi(0)\bigg>\\
 &\leq&\frac{1+o(1)}{| Q_{R_t}|}C(c)\left<f_1\left({\rm e}^{\lambda^{R,{\rm f}}_{1}t}\right)\Big|\xi^{(1)}_{R_t}=\xi(0)\right>.
\end{eqnarray*}
Here we use that $f_1$ is increasing, regularly varying (see Lemma~\ref{regular}), and $f_1(0)=0$. The last inequality follows from the upper bound in \eqref{kirsch}. The expression $(e_{k},\mathbbm{1})e_{1}(0)$ becomes delta-like as $c$ tends to infinity due to Lemma~\ref{eigenexp}. Notice that by the Cauchy-Schwarz inequality and Parseval's identity,
\[\sum\limits_{k=2}^{| Q_{R_t}|}{\rm e}^{(\lambda_{k}^{R_t}-\lambda_{1}^{R_t})t}(e_{k},\mathbbm{1})e_k(0)\leq {\rm e}^{-ct}| Q_{R_t}|^2={\rm e}^{-ct+4d\log(t\log^2 t)}.\]
It follows by Lemma~\ref{regular} i) that $C(c)\stackrel{c\rightarrow\infty}{\longrightarrow}1$.\\ 
\noindent
Lower bound.\\
Similarly as for the upper bound we find asymptotically
\begin{eqnarray*}
 \left<f_1\left(u_{R_t}(t,0)\right)\right>&\geq&\left<f_1\left(u_{R_t}(t)\mathbbm{1}_{\xi^{(1)}_R=\xi(0)}\right)\right>\\
&=&\frac{1}{| Q_{R}|}\left<f_1\Bigg(\sum\limits_{k=1}^{| Q_{R_t}|}{\rm e}^{\lambda_{k}^{R_t} t}(e_{k},\mathbbm{1})e_{k}(0)\Bigg)\Bigg|\xi^{(1)}_R=\xi(0)\right>\\
&\geq&\frac{1+o(1)}{| Q_{R}|}\left<f_1\left({\rm e}^{\lambda^{R,0}_{1}t}\right)\Big|\xi^{(1)}_R=\xi(0)\right>.
\end{eqnarray*}
The last inequality follows by the lower bound in \eqref{kirsch} and arguments similar to those of the upper bound. Notice that $(e_{1},\mathbbm{1})e_{1}(0)\geq(e_{1},e_{1})=1$ if $\xi^{(1)}_{R_t}=\xi(0)$.
\end{proof}
\noindent
Recall that $\widehat{\xi}_R=\max\limits_{x\in Q_R\setminus\{0\}}\xi(x)$.
\begin{lemma}\label{tot}
Let $f_1,\dots,f_p\in\mathcal{F}$, $t_1,\dots,t_p\in\mathcal{T}$ and \eqref{wachs} be satisfied. Then for $R>0$ and $c\in(2d\kappa,\infty)$, as $t\to\infty$,
 \[\bigg<\prod\limits_{i=1}^p f_i\left({\rm e}^{\lambda_1^{R,{\rm f}} t_i}\right)\Big|\xi^{(1)}_{R_{\widehat{t}}}=\xi(0)\bigg>
\sim | Q_{R_{\widehat{t}}}|\int\limits_{c}^{\infty}\bigg[\frac{{\rm d}}{{\rm d}h}\prod\limits_{i=1}^p f_i\left({\rm e}^{t_i(t)h}\right)\bigg]\mathbf{P}\left(\lambda^{R,{\rm f}}_{1}(\xi)>h\Big|\widehat{\xi}_R\leq h-c\right)\,{\rm d}h.\]
\end{lemma}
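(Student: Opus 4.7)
The plan is to reduce the left-hand side to an integral against the survival function of $Y:=\lambda_1^{R,{\rm f}}(\xi)$ via a layer-cake identity, and then to show that, on the range of $h$ that dominates the integrand, $\mathbf{P}(Y > h, A)$ is asymptotically equivalent to $\mathbf{P}(Y > h\mid \widehat\xi_R \le h-c)$, where $A:=\{\xi^{(1)}_{R_{\widehat t}}=\xi(0)\}$. The prefactor $|Q_{R_{\widehat t}}|$ appears at the very start: by symmetry of the i.i.d.\ law, $\mathbf{P}(A)=1/|Q_{R_{\widehat t}}|$, so $\langle g(Y)\mid A\rangle = |Q_{R_{\widehat t}}|\,\langle g(Y)\mathbbm{1}_A\rangle$ with $g(h):=\prod_{i=1}^p f_i(e^{t_i(t) h})$.

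I would first apply the layer-cake formula $g(Y)=g(0)+\int_0^\infty g'(h)\mathbbm{1}_{h<Y}\,\mathrm{d}h$, which is valid because $\xi\ge 0$ a.s.\ forces $\lambda_1^{R,{\rm f}}\ge 0$. Integrating against $\mathbbm{1}_A$ yields $\langle g(Y)\mathbbm{1}_A\rangle = g(0)\mathbf{P}(A) + \int_0^\infty g'(h)\mathbf{P}(Y>h,A)\,\mathrm{d}h$. Both the boundary term $g(0)/|Q_{R_{\widehat t}}|$ and the piece $\int_0^c g'(h)\mathbf{P}(Y>h,A)\,\mathrm{d}h$ are asymptotically negligible compared with the full integral: each only involves bounded $h$, so by \eqref{wachs} together with the lower bound in Lemma~\ref{trivial}(ii) (obtained by forcing the random walk in the Feynman--Kac representation to remain at the origin) they are of smaller order than the main piece.

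The heart of the argument is the identification of $\mathbf{P}(Y>h,A)$ with $\mathbf{P}(Y>h\mid\widehat\xi_R\le h-c)$. I would factor $A=\{\widehat\xi_R\le\xi(0)\}\cap\{\max_{x\in Q_{R_{\widehat t}}\setminus Q_R}\xi(x)\le\xi(0)\}$; since the outer block is independent of $\xi|_{Q_R}$ once $\xi(0)$ is fixed and has conditional probability $(1-\bar F(\xi(0)))^{|Q_{R_{\widehat t}}|-|Q_R|}$, this produces $\mathbf{P}(Y>h,A)=\langle\mathbbm{1}_{Y>h,\widehat\xi_R\le\xi(0)}(1-\bar F(\xi(0)))^{|Q_{R_{\widehat t}}|-|Q_R|}\rangle$. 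Splitting $\{\widehat\xi_R\le\xi(0)\}$ into the sub-events $\{\widehat\xi_R\le h-c\}$ and $\{\widehat\xi_R\in(h-c,\xi(0)]\}$, the second piece requires two potential values above $h-c$; combined with $Y>h$, $c>2d\kappa$ and Lemma~\ref{trivial}(i) forcing $\xi(0)=\xi^{(1)}_R\ge Y>h$, Assumption~(F) bounds its contribution by $o(\bar F(h+2d\kappa))$, which is of smaller order than the main piece $\mathbf{P}(Y>h,\widehat\xi_R\le h-c)\gtrsim\bar F(h+2d\kappa)(1-\bar F(h-c))^{|Q_R|-1}$. On the main piece, the same bound gives $\xi(0)\ge Y>h$, so $(1-\bar F(\xi(0)))^{|Q_{R_{\widehat t}}|-|Q_R|}\ge (1-\bar F(h))^{|Q_{R_{\widehat t}}|-|Q_R|}\to 1$ whenever $|Q_{R_{\widehat t}}|\bar F(h)\to 0$; combined with $(1-\bar F(h-c))^{|Q_R|-1}\to 1$ for fixed $R$, this gives $\mathbf{P}(Y>h,A)\sim\mathbf{P}(Y>h\mid\widehat\xi_R\le h-c)$ pointwise.

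The main obstacle will be to make the replacement $(1-\bar F(\xi(0)))^{|Q_{R_{\widehat t}}|-|Q_R|}\approx 1$ uniform on the range of $h$ dominating the integral, since $|Q_{R_{\widehat t}}|$ grows polynomially in $t$ while $\bar F$ is merely known to decay as $h\to\infty$. I would handle this by truncating at a level $h_\varepsilon(t)$ defined implicitly by $|Q_{R_{\widehat t}}|\bar F(h_\varepsilon)=\varepsilon$: for $h\ge h_\varepsilon$ the pointwise approximation is valid to within $1\pm O(\varepsilon)$, while the contribution from $h\in[c,h_\varepsilon]$ is bounded by $g(h_\varepsilon)\mathbf{P}(A)$ and is shown to be negligible by exactly the same argument as for the $[0,c]$ piece above. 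Letting $\varepsilon\downarrow 0$ after $t\to\infty$ then completes the proof.
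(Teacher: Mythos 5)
Your overall strategy matches the paper's: convert the conditional expectation into an integral against the survival function of $\lambda_1^{R,{\rm f}}$ via integration by parts (your layer-cake identity), observe that bounded ranges of $h$ are negligible by \eqref{wachs} together with the Feynman--Kac lower bound, and split the high-$h$ region into $\{\widehat{\xi}_R\le h-c\}$ and its complement, with Assumption~(F) controlling the complement. Where you go beyond the paper is the explicit treatment of the outer block $Q_{R_{\widehat t}}\setminus Q_R$: you factor the event $A=\{\xi^{(1)}_{R_{\widehat t}}=\xi(0)\}$ and compute the outer conditional probability exactly as $(1-\bar F(\xi(0)))^{|Q_{R_{\widehat t}}|-|Q_R|}$, then argue it tends to $1$. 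The paper's law-of-total-probability step tacitly replaces $\mathbf{P}(\lambda_1^{R,{\rm f}}>h,\,A\mid\widehat\xi_R\le h-c)$ by $\mathbf{P}(\lambda_1^{R,{\rm f}}>h\mid\widehat\xi_R\le h-c)$, so it actually only establishes the upper half of the stated $\sim$ (which suffices for the application in Theorem~\ref{main}); your version addresses this head on and is in that respect more thorough.

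However, there is a genuine gap in your truncation step. You set $h_\varepsilon(t)$ by $|Q_{R_{\widehat t}}|\bar F(h_\varepsilon)=\varepsilon$; since $|Q_{R_{\widehat t}}|\to\infty$ and $\bar F(h)\to 0$, necessarily $h_\varepsilon(t)\to\infty$. You then claim the contribution from $h\in[c,h_\varepsilon(t)]$, bounded by $g(h_\varepsilon(t))\mathbf{P}(A)$, is negligible ``by exactly the same argument as for the $[0,c]$ piece.'' But that argument uses \eqref{wachs}, which is stated for each \emph{fixed} $a$; it does not apply directly when the truncation level grows with $t$. You would need a separate estimate showing that $\prod_i f_i\!\big(e^{t_i(t)h_\varepsilon(t)}\big)$ is still of smaller order than the main integral, for instance by verifying $h_\varepsilon(t)=o(h_t)$ (true in the Weibull case since $\varphi(h_\varepsilon)\sim d\log\widehat t$ while $\varphi'(h_t)=t$), and then converting this into the required bound on the integrand. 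The paper avoids this entirely by truncating at a \emph{fixed} level $a=a(R,c,\vartheta)$, using only that $\mathbf{P}(\widehat\xi_R\le h-c)=F(h-c)^{|Q_R|-1}\to 1$ as $h\to\infty$ for fixed $R$; you should either adopt that device for the $\mathbf{P}(\widehat\xi_R\le h-c)$ factor and argue the outer-block factor separately, or supply the missing growth estimate for $h_\varepsilon(t)$.
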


\begin{proof}
 Let $F_{\lambda_1^{R,{\rm f}}}(h)={\bf P}(\lambda_1^{R,{\rm f}}>h\big|\xi^{(1)}_{R_t}=\xi(0))$. Integration by parts yields
\begin{eqnarray*}
 &&\left<\prod\limits_{i=1}^p f_i\left({\rm e}^{\lambda_1^{R,{\rm f}} t_i}\right)\Big|\xi^{(1)}_{R_{\widehat{t}}}=\xi(0)\right>
=\int\limits_0^\infty\prod\limits_{i=1}^p f_i\left({\rm e}^{h t_i}\right) {\rm d}F_{\lambda_1^{R,{\rm f}}}(h)\\
&=&\prod\limits_{i=1}^p f_i\left(1\right)+\int\limits_{0}^{\infty}\bigg[\frac{{\rm d}}{{\rm d}h}\prod\limits_{i=1}^p f_i\left({\rm e}^{t_ih}\right)\bigg]\mathbf{P}\left(\lambda_1^{R,{\rm f}}>h\Big|\xi^{(1)}_{R_{\widehat{t}}}=\xi(0)\right)\,{\rm d}h.
\end{eqnarray*}
For $c>2d\kappa$, it follows by Lemma~\ref{trivial} and the law of total probability 
\begin{align*}
\mathbf{P}\left(\lambda_1^{R,{\rm f}}>h\Big|\xi^{(1)}_{R_{\widehat{t}}}=\xi(0)\right)
=&| Q_{R_{\widehat{t}}}|\left[\mathbf{P}\left(\lambda_1^{R,{\rm f}}>h\Big|\widehat{\xi}_R\leq h-c\right)\mathbf{P}\left(\widehat{\xi}_R\leq h-c\right)\right.\\
&\left.\qquad+\mathbf{P}\left(\lambda_1^{R,{\rm f}}>h,\xi^{(1)}_{R_{\widehat{t}}}=\xi(0)\Big|\widehat{\xi}_R> h-c\right)\mathbf{P}\left(\widehat{\xi}_R> h-c\right)\right].
\end{align*}
Furthermore, elementary calculus yields
\[\frac{{\rm d}}{{\rm d}h}\prod\limits_{i=1}^p f_i\left({\rm e}^{t_i(t)h}\right)=\sum\limits_{i=1}^{p}t_i(t)t_i'(t)e^{t_i(t)h}f_i'\left(e^{t_i(t)h}\right)\prod_{\substack{j=1\\ j\neq i}}^pf_j\left(e^{t_j(t)h}\right).\]
It remains to show 
\begin{eqnarray}\label{sima}
&&\int\limits_c^\infty {\rm e}^{t_1h}f_1'\left({\rm e}^{t_1h}\right)\prod\limits_{j=2}^p f_j\left({\rm e}^{ t_jh}\right)\mathbf{P}\left(\lambda_1^{R,{\rm f}}>h\Big|\widehat{\xi}_R\leq h-c\right)\mathbf{P}\left(\widehat{\xi}_R\leq h-c\right)\,{\rm d}h\nonumber\\
&\sim&\int\limits_c^\infty {\rm e}^{t_1h}f_1'\left({\rm e}^{t_1h}\right)\prod\limits_{j=2}^p f_j\left({\rm e}^{ t_jh}\right)\mathbf{P}\left(\lambda_1^{R,{\rm f}}>h\Big|\widehat{\xi}_R\leq h-c\right)\,{\rm d}h,\qquad t\rightarrow\infty,
\end{eqnarray}
and
\begin{eqnarray}\label{oa}
 &&\int\limits_0^\infty {\rm e}^{t_1h}f_1'\left({\rm e}^{t_1h}\right)\prod\limits_{j=2}^p f_j\left({\rm e}^{ t_jh}\right)\mathbf{P}\left(\lambda_1^{R,{\rm f}}>h,\xi^{(1)}_{R_{\widehat{t}}}=\xi(0)\Big|\widehat{\xi}_R> h-c\right)\mathbf{P}\left(\widehat{\xi}_R> h-c\right)\,{\rm d}h\nonumber\\
&=&o\Bigg(\int\limits_0^\infty {\rm e}^{t_1h}f_1'\left({\rm e}^{t_1h}\right)\prod\limits_{j=2}^p f_j\left({\rm e}^{ t_jh}\right)\mathbf{P}\left(\lambda_1^{R,{\rm f}}>h\right)\,{\rm d}h\Bigg),\qquad t\rightarrow\infty.
\end{eqnarray}
First we show \eqref{oa}.\\
Assumption (F) implies that for $c>2d\kappa$:
\[\text {For all }\delta>0\text{ it exists }h_0=h_0(\delta)>0\text{ such that for all } h>h_0\colon\]
\[\mathbf{P}\left(\xi(0)>h-c\right)^2\leq\delta\mathbf{P}\left(\xi(0)>h+2d\kappa\right).\]
 It follows with \eqref{wachs} and Lemma~\ref{trivial},
\begin{eqnarray*}
 &&\frac{\text{lhs of }\eqref{oa}}{\int\limits_0^\infty {\rm e}^{t_1h}f_1'\left({\rm e}^{t_1h}\right)\prod\limits_{j=2}^p f_j\left({\rm e}^{ t_jh}\right)\mathbf{P}\left(\lambda_1^{R,{\rm f}}>h\right)\,{\rm d}h}\\
&\leq&| Q_{R}|\Bigg(\frac{\int\limits_0^{h_0} {\rm e}^{t_1h}f_1'\left({\rm e}^{t_1h}\right)\prod\limits_{j=2}^p f_j\left({\rm e}^{ t_jh}\right)\mathbf{P}\left(\xi(0)>h-c\right)^2\,{\rm d}h}{\int\limits_0^\infty {\rm e}^{t_1h}f_1'\left({\rm e}^{t_1h}\right)\prod\limits_{j=2}^p f_j\left({\rm e}^{ t_jh}\right)\mathbf{P}\left(\xi(0)>h+2d\kappa\right)\,{\rm d}h}+\delta\Bigg)\\
&\leq&| Q_{R}|\Bigg(\frac{\prod\limits_{j=1}^p f_j\left({\rm e}^{ t_jh_0}\right){\rm e}^{t_1h_0}}{\prod\limits_{j=2}^p f_j\left(1\right)\left<f_1\left({\rm e}^{t_1\xi}\right)\right>}+\delta\Bigg)\stackrel{t\rightarrow\infty,\delta\rightarrow0}{\longrightarrow}0.
\end{eqnarray*}
Now we show \eqref{sima}.\\
Since $\mathbf{P}(\widehat{\xi}_R\leq h-c)=\left(F(h-c)\right)^{| Q_{R}|-1}\stackrel{h\rightarrow\infty}{\longrightarrow}1$ for fixed $R$ we find for every $\vartheta\in(0,1)$ an $a=a(R,c,\vartheta)$ such that
\begin{eqnarray*}
 &&\int\limits_c^\infty {\rm e}^{t_1h}f_1'\left({\rm e}^{t_1h}\right)\prod\limits_{j=2}^p f_j\left({\rm e}^{ t_jh}\right)\mathbf{P}\left(\lambda_1^{R,{\rm f}}>h\Big|\widehat{\xi}_R\leq h-c\right)\mathbf{P}\left(\widehat{\xi}_R\leq h-c\right)\,{\rm d}h\\
&\geq&\vartheta\int\limits_a^\infty {\rm e}^{t_1h}f_1'\left({\rm e}^{t_1h}\right)\prod\limits_{j=2}^p f_j\left({\rm e}^{ t_jh}\right)\mathbf{P}\left(\lambda_1^{R,{\rm f}}>h\Big|\widehat{\xi}_R\leq h-c\right)\,{\rm d}h.
\end{eqnarray*}
Therefore, it is sufficient to show for every $a\geq0$,
\begin{equation}\label{lang}
\frac{\int\limits_c^a {\rm e}^{t_1h}f_1'\left({\rm e}^{t_1h}\right)\prod\limits_{j=2}^p f_j\left({\rm e}^{ t_jh}\right)\mathbf{P}\left(\lambda_1^{R,{\rm f}}>h\Big|\widehat{\xi}_R\leq h-c\right)\,{\rm d}h}{\int\limits_c^\infty {\rm e}^{t_1h}f_1'\left({\rm e}^{t_1h}\right)\prod\limits_{j=2}^p f_j\left({\rm e}^{ t_jh}\right)\mathbf{P}\left(\lambda_1^{R,{\rm f}}>h\right)\,{\rm d}h}\stackrel{t\to\infty}{\longrightarrow}0.
\end{equation}
Using the bounds from Lemma~\ref{trivial} for $\lambda^{*,R}_1$ and \eqref{wachs}, we find
\begin{equation}
\text{lhs of \eqref{lang} }
\leq\exp\left\{t(a+4d\kappa)+\log{t}-H(t)\right\}
\stackrel{t\rightarrow\infty}{\longrightarrow}0.
\end{equation}
In the last line we use that $\lim_{t\to\infty}H(t)/t=\infty$.
\end{proof}
\noindent
\begin{lemma}\label{tod}
Let $f_1,\dots,f_p\in\mathcal{F}$, $t_1,\dots,t_p\in\mathcal{T}$ and \eqref{wachs} be satisfied. Then for $R>0$ and $c\in(0,\infty)$, as $t\to\infty$,
\[
 \bigg<\prod\limits_{i=1}^p f_i\left({\rm e}^{\lambda_1^{0,R} t_i}\right)\Big|\xi^{(1)}_R=\xi(0)\bigg>\\
\geq \frac{| Q_{R}|}{\big(1+o(1)\big)}\int\limits_{c}^{\infty}\bigg[\frac{{\rm d}}{{\rm d}h}\prod\limits_{i=1}^p f_i\left({\rm e}^{t_ih}\right)\bigg]\mathbf{P}\left(\lambda^{0,R}_{1}(\xi)>h\Big|\widehat{\xi}_R\leq h-c\right)\,{\rm d}h.
\]
\end{lemma}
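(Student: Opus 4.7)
The plan is to mirror the proof of Lemma~\ref{tot} almost word for word, exploiting that we now need a lower rather than an upper bound. First I would integrate by parts in the conditional expectation to get
\[
\bigg<\prod_{i=1}^p f_i\left(e^{\lambda_1^{R,0} t_i}\right)\Big|\xi^{(1)}_R=\xi(0)\bigg>
= \prod_{i=1}^p f_i(1) + \int_0^\infty \bigg[\frac{d}{dh}\prod_{i=1}^p f_i(e^{t_i h})\bigg]\mathbf{P}\left(\lambda_1^{R,0}>h\Big|\xi^{(1)}_R=\xi(0)\right)dh,
\]
and then invoke exchangeability of the i.i.d.\ potential, $\mathbf{P}(\xi^{(1)}_R=\xi(0))=1/|Q_R|$, to rewrite the conditional tail as $|Q_R|\,\mathbf{P}(\lambda_1^{R,0}>h,\,\xi^{(1)}_R=\xi(0))$.

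The crucial observation is an elementary inclusion of events: on $\{\lambda_1^{R,0}>h,\,\widehat{\xi}_R\le h-c\}$, Lemma~\ref{trivial}(i) forces $\xi^{(1)}_R\ge \lambda_1^{R,0}>h>h-c\ge \widehat{\xi}_R$, which automatically gives $\xi^{(1)}_R=\xi(0)$. This works for \emph{any} $c>0$, so unlike in Lemma~\ref{tot} we do not need $c>2d\kappa$. Since $\xi(0)$ is independent of $\widehat{\xi}_R$, this yields
\[
\mathbf{P}\left(\lambda_1^{R,0}>h,\xi^{(1)}_R=\xi(0)\right)\geq \mathbf{P}\left(\lambda_1^{R,0}>h\Big|\widehat{\xi}_R\leq h-c\right)\mathbf{P}\left(\widehat{\xi}_R\leq h-c\right),
\]
and dropping the nonnegative integrand on $[0,c]$ gives the desired lower bound up to the extra factor $\mathbf{P}(\widehat{\xi}_R\leq h-c)$.

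It remains to absorb this factor into the $(1+o(1))$. Since $\mathbf{P}(\widehat{\xi}_R\le h-c)\to 1$ as $h\to\infty$ for fixed $R$, it suffices to show that the mass of $\int_c^\infty [\frac{d}{dh}\prod_i f_i(e^{t_i h})]\mathbf{P}(\lambda_1^{R,0}>h\mid\widehat{\xi}_R\le h-c)\,dh$ concentrates on large $h$. I would split the integral at a large threshold $a$: the contribution from $[c,a]$ is bounded above by a polynomial in $t$ times $\prod_i f_i(e^{t_i a})$, while the contribution from $[a,\infty)$ admits a lower bound of order $\langle\prod_i f_i(e^{t_i(\xi(0)-2d\kappa)})\rangle$, using $\lambda_1^{R,0}\ge \xi(0)-2d\kappa$ (Lemma~\ref{trivial}(i)) and independence of $\xi(0)$ from $\widehat{\xi}_R$. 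Condition~\eqref{wachs} together with the moment lower bound Lemma~\ref{trivial}(ii) then forces the ratio $[c,a]/[c,\infty)$ to vanish, exactly as in~\eqref{lang}.

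The only genuinely technical step is this concentration argument, but it is easier than its counterpart in Lemma~\ref{tot}: there a second complementary error term on $\{\widehat{\xi}_R>h-c\}$ had to be handled via Assumption~(F), whereas here the trivial bound $\mathbf{P}(\widehat{\xi}_R\le h-c)\le 1$ gives one direction of the asymptotic equivalence for free, so we only need the matching lower bound.
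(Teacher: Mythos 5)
Your proof is correct and follows essentially the same route as the paper's: integrate by parts, use exchangeability to rewrite the conditional tail, and observe that $\{\lambda_1^{R,0}>h,\widehat{\xi}_R\le h-c\}\subset\{\lambda_1^{R,0}>h,\xi^{(1)}_R=\xi(0)\}$ to get the lower bound with the extra factor $\mathbf{P}(\widehat{\xi}_R\le h-c)$, which is then absorbed into $(1+o(1))$ by the concentration argument of~\eqref{sima}/\eqref{lang}. Your explicit derivation of the event inclusion (and the remark that it holds for any $c>0$, consistent with the hypothesis $c\in(0,\infty)$ of the lemma) actually makes the paper's unjustified inequality more transparent.
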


\begin{proof}
Analog to the proof of Lemma~\ref{tot} we find
 \[\bigg<\prod\limits_{i=1}^p f_i\left({\rm e}^{\lambda_1^{0,R} t_i}\right)\Big|\xi^{(1)}_R=\xi(0)\bigg>\\
=\prod\limits_{i=1}^p f_i\left(1\right)+\int\limits_{c}^{\infty}\bigg[\frac{{\rm d}}{{\rm d}h}\prod\limits_{i=1}^p f_i\left({\rm e}^{t_ih}\right)\bigg]\mathbf{P}\left(\lambda_1^{0,R}>h\Big|\xi^{(1)}_R=\xi(0)\right)\,{\rm d}h.
\]
 Now the claim follows because
\begin{eqnarray*}
 \mathbf{P}\left(\lambda_1^{0,R}>h\Big|\xi^{(1)}_R=\xi(0)\right)
&\geq&\mathbf{P}\left(\lambda_1^{0,R}>h\Big|\widehat{\xi}_R\leq h-c\right)\frac{\mathbf{P}\left(\widehat{\xi}_R\leq h-c\right)}{\mathbf{P}\left(\xi^{(1)}_R=\xi(0)\right)}\\
&\sim& | Q_{R}|\mathbf{P}\left(\lambda_1^{0,R}>h\Big|\widehat{\xi}_R\leq h-c\right),\qquad h\rightarrow\infty.
\end{eqnarray*}
The last asymptotics are proven in the same way as in \eqref{sima}.
\end{proof}
\noindent
\begin{proof}[Proof of Theorem~\ref{main}]
The upper bound is an immediate consequence of Propositions~\ref{cut} and \ref{lowup} and Lemma~\ref{tot}, and the lower bound follows from Propositions~\ref{cut} and \ref{lowup} and Lemma~\ref{tod}.
\end{proof}
\begin{remark}
If we only consider integer moments (i.e. $f_i(x)=x^p, p\in\N$), then the proofs can be simplified and Assumption (F+) below suffices in Theorem~\ref{main} because we can use periodic boundary conditions for the upper bound due to \cite[Lemma 1.4]{GM98}.
\end{remark}

\underline{Assumption (F+)}: 
\begin{enumerate}
\item $\left(\bar{F}(h-c)\right)^2=o\left(\bar{F}(h)\right),\qquad h\rightarrow\infty,\qquad\text{for all }c>0$.
\item $H(t)<\infty$ \text{  for all } $t\geq0$.
\end{enumerate}

\begin{remark}
Theorem~\ref{main} also holds true if we consider the initial condition $u_0=\delta_0$ which implies that in this case for all $f\in\mathcal{R}_+$,
\[\sum\limits_{x\in\mathbb{Z}^d\setminus\left\{0\right\}}\left<f(u(t,x))\right>=o\big(\left<f\big(u(t,0)\big)\right>\big),\qquad t\rightarrow\infty.\]
\end{remark}

\section{The conditional probability}\label{The conditional probability}
In this section we investigate how to calculate $\mathbf{P}\big(\lambda^{*,R}_{1}(\xi)>h\big|\widehat{\xi}_R\leq h-c\big)$. Let 
\[E_R^*(h):=\kappa\sum\limits_{|y|_1=1}\mathbb{E}_{y}^{R,*}\exp\Bigg\{\int\limits_{0}^{\tau_0}\left(\xi\left(X_{s}\right)-h\right)\,{\rm d}s\Bigg\}.\]

\begin{lemma}\label{stochlamb}
Let $h\geq c>2d\kappa$. Then
\[\mathbf{P}\left(\lambda^{R,{\rm f}}_{1}(\xi)>h\Big|\widehat{\xi}_R\leq h-c\right)=\left<\bar{F}\Big(h+2d\kappa-E_R^*(h)\Big)\Big|\widehat{\xi}_R\leq h-c\right>.\]
\end{lemma}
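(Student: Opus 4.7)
The plan is to convert the event $\{\lambda_1^{R,{\rm f}}(\xi)>h\}$ into an event concerning only $\xi(0)$, after fixing the remaining potential values, and then to integrate out $\xi(0)$ using its independence.

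\textbf{Step 1 (restriction to the ``peak at origin'' regime).} Conditionally on $\widehat{\xi}_R\leq h-c$, one first shows that $\{\lambda_1^{R,{\rm f}}>h\}$ forces $\xi(0)=\xi^{(1)}_R$ and $\xi^{(1)}_R-\xi^{(2)}_R\geq c>2d\kappa$. Indeed, by Lemma~\ref{trivial}\,i) we have $\lambda_1^{R,{\rm f}}\leq \xi^{(1)}_R$, so if $\lambda_1^{R,{\rm f}}>h$ then $\xi^{(1)}_R>h>h-c\geq\widehat\xi_R$, forcing $\xi^{(1)}_R=\xi(0)$ and $\xi(0)-\widehat\xi_R>c$. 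Hence the hypotheses of Lemma~\ref{problamb} are satisfied on this event.

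\textbf{Step 2 (reading the eigenvalue equation at the origin).} The eigenvalue equation $\kappa\Delta e_1^{R,{\rm f}}(0)+\xi(0)e_1^{R,{\rm f}}(0)=\lambda_1^{R,{\rm f}}e_1^{R,{\rm f}}(0)$ rewrites as
\[
\xi(0)=\lambda_1^{R,{\rm f}}+2d\kappa-\kappa\sum_{|y|_1=1}\frac{e_1^{R,{\rm f}}(y)}{e_1^{R,{\rm f}}(0)}.
\]
Invoking the probabilistic representation from Lemma~\ref{problamb}, the last term equals $E_R^*(\lambda_1^{R,{\rm f}})$, so $\xi(0)=\lambda_1^{R,{\rm f}}+2d\kappa-E_R^*(\lambda_1^{R,{\rm f}})$.

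\textbf{Step 3 (monotonicity and inversion).} Set $F(u):=u+2d\kappa-E_R^*(u)$. Because $E_R^*(u)$ is built from $\mathbb{E}_y\exp\{\int_0^{\tau_0}(\xi(X_s)-u)\,{\rm d}s\}$, differentiation in $u$ under the expectation gives $\frac{d}{du}E_R^*(u)<0$, so $F$ is strictly increasing. The identity of Step~2 thus yields the equivalence
\[
\{\lambda_1^{R,{\rm f}}>h\}\ =\ \{\xi(0)>h+2d\kappa-E_R^*(h)\}
\]
on the event $\{\widehat\xi_R\leq h-c\}$. Crucially, $E_R^*(h)$ is measurable with respect to $(\xi(x))_{x\in Q_R\setminus\{0\}}$, since the paths contributing to the expectation avoid the origin up to time $\tau_0$, and $h$ is a deterministic constant.

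\textbf{Step 4 (independence and integration).} Because $\xi(0)$ is independent of $(\xi(x))_{x\neq 0}$ and the conditioning event lies in $\sigma\bigl((\xi(x))_{x\neq 0}\bigr)$, we can condition on $(\xi(x))_{x\neq 0}$ inside the probability: this fixes $E_R^*(h)$ and produces $\bar F\bigl(h+2d\kappa-E_R^*(h)\bigr)$. Taking the conditional expectation over $\widehat\xi_R\leq h-c$ yields the claimed formula.

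The only delicate point is the circular-looking bootstrap in Steps~1--2: Lemma~\ref{problamb} requires $\xi^{(1)}_R=\xi(0)$ and $\xi^{(1)}_R-\xi^{(2)}_R>2d\kappa$, which we must know before we can write $\xi(0)=F(\lambda_1^{R,{\rm f}})$. Step~1 resolves this by observing that on $\{\widehat\xi_R\leq h-c\}\cap\{\lambda_1^{R,{\rm f}}>h\}$ these hypotheses are automatic from the bound $\lambda_1^{R,{\rm f}}\leq\xi^{(1)}_R$ in Lemma~\ref{trivial}\,i) together with $c>2d\kappa$; while on the complementary event $\{\lambda_1^{R,{\rm f}}\leq h\}$ the equivalence in Step~3 is automatic because $F(h)\geq h\geq\xi(0)$ on $\{\xi(0)\leq h\}$ (and otherwise one repeats Step~2). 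Everything else is a routine application of Fubini and the independence structure of the i.i.d.\ field.
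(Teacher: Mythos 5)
Your proof is correct and follows essentially the same route as the paper: the probabilistic representation of $e_1^{R,*}$ from Lemma~\ref{problamb} gives the identity $\xi(0)=\lambda_1^{R,*}+2d\kappa-E_R^*(\lambda_1^{R,*})$, the strict monotonicity of $u\mapsto u+2d\kappa-E_R^*(u)$ yields the event equivalence $\{\lambda_1^{R,*}>h\}=\{\xi(0)>h+2d\kappa-E_R^*(h)\}$ on $\{\widehat{\xi}_R\leq h-c\}$, and integrating out $\xi(0)$ using the independence of $\xi(0)$ from $(\xi(x))_{x\neq 0}$ (with $E_R^*(h)$ and the conditioning event measurable w.r.t.\ the latter) gives the claim. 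The paper compresses all of this into a few lines; you usefully make explicit the check that the hypotheses of Lemma~\ref{problamb} are automatically satisfied on both sides of the equivalence once $\widehat{\xi}_R\leq h-c$ is imposed, and you spell out the Fubini/independence step hidden in ``the statement follows immediately.''
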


\begin{proof}
Using the probabilistic representation of $e^{R,*}_1$ from Lemma~\ref{problamb}, we find that 
\[\xi(0)=\lambda^{R,*}_{1}(\xi)+2d\kappa-E_R^*\left(\lambda^{R,*}_{1}(\xi)\right),\]
whose right hand side is strictly increasing in $\lambda^{R,{\rm f}}_{1}$. Hence,
\[\lambda^{R,*}_{1}(\xi)>h\quad\Longleftrightarrow\quad\xi(0)>h+2d\kappa-E_R^*(h).\]
Now the statement follows immediately.
\end{proof}
\noindent
\begin{lemma}\label{erlang}
 If $R>0$, $|y|_1=1$, $n\in\mathbb{N}_0$ and $h>\widehat{\xi}_{R}-2d\kappa$, then
\begin{eqnarray*}
&&\sum\limits_{k=0}^{R}n!\binom{n+2k}{n}\mathbb{P}_{ y}\left(\tau_0=\sum\limits_{l=1}^{2k+1}\sigma_l\right)\frac{(2d\kappa)^{2k+1}}{(h+2d\kappa)^{n+2k+1}}\\
 &\leq&\frac{{\rm d}^n}{{\rm d}h^n}\mathbb{E}_{ y}^{R,*}\exp\Bigg\{\int\limits_0^{\tau_0}\left(\xi(X_{s})-h\right)\,{\rm d}s\Bigg\}\\
&\leq&\sum\limits_{k=0}^{\infty}n!\binom{n+2k}{n}\mathbb{P}_{ y}\left(\tau_0=\sum\limits_{l=1}^k\sigma_l\right)\frac{(2d\kappa)^{2k+1}}{(h+2d\kappa-\widehat{\xi}_{R})^{n+2k+1}}.
\end{eqnarray*}
\end{lemma}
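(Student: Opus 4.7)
The plan is to integrate out the exponential waiting times of the walk explicitly, reducing $\mathbb{E}_y^{R,*}\exp\{\int_0^{\tau_0}(\xi(X_s)-h)\,{\rm d}s\}$ to a sum over embedded lattice paths of products of simple rational functions of $h$, and only then to differentiate $n$ times in $h$.

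By parity of the simple symmetric walk, a trajectory from $y$ with $|y|_1 = 1$ must make an odd number of jumps before hitting the origin, and all waiting sites lie in $Q_R \setminus \{0\}$. Decomposing on the events $\{\tau_0 = \sum_{l=1}^{2k+1}\sigma_l\}$ for $k \geq 0$ and conditioning on the embedded discrete path $\pi = (y = x_0, x_1, \ldots, x_{2k+1} = 0)$, the independence of the embedded chain from the i.i.d.\ $\mathrm{Exp}(2d\kappa)$ waiting times turns the exponent into $\sum_{l=1}^{2k+1}(\xi(x_{l-1})-h)\sigma_l$. The hypothesis $h > \widehat{\xi}_R - 2d\kappa$ keeps $h+2d\kappa-\xi(x_{l-1}) > 0$ at every waiting site, so
\[\mathbb{E}\Bigl[\exp\Bigl\{\sum_{l=1}^{2k+1}(\xi(x_{l-1})-h)\sigma_l\Bigr\}\Bigr] = \prod_{l=1}^{2k+1}\frac{2d\kappa}{h+2d\kappa-\xi(x_{l-1})}.\]

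To produce the $n$-th derivative I would apply Leibniz's rule to this product of reciprocals, which yields
\[(-1)^n\frac{{\rm d}^n}{{\rm d}h^n}\prod_{l=1}^{2k+1}\frac{1}{h+2d\kappa-\xi(x_{l-1})} = n!\sum_{\substack{n_1,\ldots,n_{2k+1}\geq 0\\ n_1+\cdots+n_{2k+1}=n}}\prod_{l=1}^{2k+1}\frac{1}{(h+2d\kappa-\xi(x_{l-1}))^{n_l+1}}.\]
The number of non-negative compositions of $n$ into $2k+1$ parts equals $\binom{n+2k}{n}$, which supplies the combinatorial factor of the statement. Bounding each reciprocal above by $(h+2d\kappa-\widehat{\xi}_R)^{-(n_l+1)}$ (using $\xi(x_{l-1}) \leq \widehat{\xi}_R$) and below by $(h+2d\kappa)^{-(n_l+1)}$ (using $\xi(x_{l-1}) \geq 0$), the exponents collapse to $n+2k+1$, and after multiplying by $(2d\kappa)^{2k+1}$ I obtain path-wise upper and lower bounds on $(-1)^n$ times the $n$-th derivative of the path's contribution.

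Summing over embedded paths collapses the path probabilities into $\mathbb{P}_y(\tau_0 = \sum_{l=1}^{2k+1}\sigma_l)$, and then summing over $k$ produces the announced bounds: for the upper bound ($*={\rm f}$) $k$ ranges over all of $\mathbb{N}_0$, while for the lower bound ($*=0$) I would keep only those $k \leq R$ for which every admissible $(2k+1)$-step excursion from $y$ back to $0$ automatically stays inside $Q_R$, so that the zero-boundary walk agrees with the free walk on those paths. The main technical nuisance is to justify the interchange of the $n$-th derivative with the infinite series over $k$; the plan is to invoke dominated convergence using the upper-bound series itself as majorant, whose summability follows from the geometric factor $(2d\kappa/(h+2d\kappa-\widehat{\xi}_R))^{2k+1}$ combined with the polynomial $\binom{n+2k}{n} = O(k^n)$ and the trivial bound $\mathbb{P}_y(\tau_0 = \sum_{l=1}^{2k+1}\sigma_l) \leq 1$. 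The function being differentiated is completely monotone in $h$, so $(-1)^n\,{\rm d}^n/{\rm d}h^n$ is non-negative and is the quantity that the positive two-sided bounds in the lemma actually control.
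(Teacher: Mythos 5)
Your proof is correct and reproduces the combinatorial factor $n!\binom{n+2k}{n}$ by a route that differs from the paper's in the order of operations. The paper differentiates the Feynman--Kac functional first, which brings down $(-\tau_0)^n$, decomposes on the number of jumps, and then integrates directly against the Erlang$(2k+1,2d\kappa)$ density of $\tau_0$ (with $\xi$ replaced by its extreme values $0$ and $\widehat{\xi}_R$), reading off $\frac{(n+2k)!}{(2k)!}=n!\binom{n+2k}{n}$ from the gamma integral $\int_0^\infty x^{n+2k}{\rm e}^{-\lambda x}\,{\rm d}x$. You instead integrate out the independent $\mathrm{Exp}(2d\kappa)$ waiting times first, obtaining the path-wise product $\prod_{l=1}^{2k+1}\frac{2d\kappa}{h+2d\kappa-\xi(x_{l-1})}$, and only then differentiate via the Leibniz rule, with the same factor arising as $n!$ times the number of non-negative compositions of $n$ into $2k+1$ parts. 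The two computations are equivalent once the site values are bounded uniformly by $0$ and $\widehat{\xi}_R$, but your version keeps the dependence on individual $\xi(x_{l-1})$ visible one step longer, and you explicitly justify interchanging $\frac{{\rm d}^n}{{\rm d}h^n}$ with the series over $k$, a point the paper's proof passes over in silence. For the lower bound you restrict to excursions that cannot leave $Q_R$; note that a $(2k+1)$-step bridge from $|y|_1=1$ to $0$ reaches distance at most $k+1$ from the origin, so the safe cutoff is $k\le\lceil R\rceil-1$ rather than $k\le R$ as written in the statement, but that slight imprecision is in the lemma itself (as is the choice of boundary condition $*$), not an error you introduced.
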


\begin{proof}
Let $Y_m:=X_{\sum\limits_{i=1}^m \sigma_i}$
 be the embedded discrete time random walk. We get 
\begin{eqnarray}
&&\frac{{\rm d}^n}{{\rm d}h^n}\mathbb{E}_{ y}\exp\Bigg\{\int\limits_0^{\tau_0}\left(\xi(X_{s})-h\right)\,{\rm d}s\Bigg\}=\mathbb{E}_{ y}\exp\Bigg\{(-\tau_0)^n\int\limits_0^{\tau_0}\left(\xi(X_{s})-h\right)\,{\rm d}s\Bigg\}\nonumber\\
&=&\sum\limits_{k=0}^{\infty}\mathbb{E}_{ y}\Bigg(-\sum\limits_{l=1}^{2k+1}\sigma_l\Bigg)^n\exp\Bigg\{\sum\limits_{l=1}^{2k+1}\sigma_l\left(\xi(Y_{l-1})-h\right)\Bigg\}\mathbbm{1}_{\tau_0=\sum\limits_{l=1}^{2k+1}\sigma_l}.\label{summe}
\end{eqnarray}
Since $\sum\limits_{l=1}^{2k+1}\sigma_l\sim\text{Erlang~}\left(2k+1,2d\kappa\right)$, we find for $z\in \{0,\widehat{\xi}_{R}\}$,
\begin{eqnarray*}
 \mathbb{E}_{ y}{\rm e}^{(z-h)\sum\limits_{l=1}^k\sigma_l}\left(-\sum\limits_{l=1}^k\sigma_l\right)^n&=&\frac{(2d\kappa)^k}{(k-1)!}\int\limits_{0}^{\infty} {\rm e}^{(z-h-2d\kappa) x}x^{n+k-1}\,{\rm d}x\\
&=&\frac{(n+k-1)!}{(k-1)!}\frac{(2d\kappa)^k}{(h+2d\kappa-z)^{n+k}}.
\end{eqnarray*}
Now the claim follows because $0\leq\xi(Y_{l-1})\leq\widehat{\xi}_{R}$ for all $Y_{l-1}\neq 0$ and because jumptimes and jumps are independent.
\end{proof}
\noindent
\begin{lemma}\label{ableit}
Let $\alpha<1$ and fix $R>0$. Then there exists $0<c_1,c_2<\infty$ such that for any large $h$,
\begin{enumerate}
 \item 
        \(\begin{aligned}[t]
&\bar{F}\left(h+2d\kappa-\frac{c_2}{h}\right)\\
\leq&\mathbf{P}\left(\lambda^{R,*}_{1}(\xi)>h,\widehat{\xi}_{R}<h^\alpha\Big|\widehat{\xi}_R\leq h-c\right)\\
\leq&\bar{F}\left(h+2d\kappa-\frac{c_1}{h}\right),
\end{aligned}\)
 \item 
\(\begin{aligned}[t]
&\bar{F}'\left(h+2d\kappa-\frac{c_1}{h}\right)\left(1-\frac{c_1}{(h+2d\kappa)^2}\right)\\
\leq&\frac{{\rm d}}{{\rm d}h}\mathbf{P}\left(\lambda^{R,*}_{1}(\xi)>h,\widehat{\xi}_{R}<h^\alpha\Big|\widehat{\xi}_R\leq h-c\right)\\
\leq& \bar{F}'\left(h+2d\kappa-\frac{c_2}{h}\right)\left(1-\frac{c_2}{(h+2d\kappa)^2}\right),
\end{aligned}\)
\item
\(\begin{aligned}[t]
 &\bar{F}''\left(h+2d\kappa-\frac{c_2}{h}\right)\left(1-\frac{c_1}{(h+2d\kappa)^2}\right)^2+\bar{F}'\left(h+2d\kappa-\frac{ c_1}{h}\right)\frac{c_1}{(h+2d\kappa)^3}\\
\leq&\frac{{\rm d}^2}{{\rm d}h^2}\mathbf{P}\left(\lambda^{R,*}_{1}(\xi)>h,\widehat{\xi}_{R}<h^\alpha\Big|\widehat{\xi}_R\leq h-c\right)\\
\leq&\bar{F}''\left(h+2d\kappa-\frac{c_1}{h}\right)\left(1-\frac{c_2}{(h+2d\kappa)^2}\right)^2+\bar{F}'\left(h+2d\kappa-\frac{ c_2}{h}\right)\frac{c_2}{(h+2d\kappa)^3}.
\end{aligned}\)
\end{enumerate}
\end{lemma}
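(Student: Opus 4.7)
The plan is to reduce all three statements to bounds on $E_R^*(h)$ and its first two derivatives via Lemma~\ref{stochlamb}, and then to obtain those bounds from Lemma~\ref{erlang} on the event $\{\widehat{\xi}_R < h^\alpha\}$. First, for $h$ large enough one has $h^\alpha < h - c$, so $\{\widehat{\xi}_R < h^\alpha\} \subset \{\widehat{\xi}_R \leq h-c\}$, and Assumption (F*) ensures that $\mathbf{P}(\widehat{\xi}_R \geq h^\alpha \mid \widehat{\xi}_R \leq h-c)$ tends to zero. Applying Lemma~\ref{stochlamb} together with the independence of $\xi(0)$ from the remaining coordinates of $\xi$ yields
\[
\mathbf{P}\bigl(\lambda_1^{R,*}(\xi) > h,\, \widehat{\xi}_R < h^\alpha \bigm| \widehat{\xi}_R \leq h-c\bigr) = \bigl\langle \bar{F}\bigl(h + 2d\kappa - E_R^*(h)\bigr)\, \mathbbm{1}_{\widehat{\xi}_R < h^\alpha} \bigm| \widehat{\xi}_R \leq h-c\bigr\rangle,
\]
where the analogous zero--boundary version of Lemma~\ref{stochlamb} is used for $*=0$.

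Second, on $\{\widehat{\xi}_R < h^\alpha\}$ one has $h + 2d\kappa - \widehat{\xi}_R \geq h/2$ for $h$ large, so the upper-bound series in Lemma~\ref{erlang} with $n=0$ converges geometrically. Summing the resulting bound over the $2d$ nearest neighbours of the origin and multiplying by $\kappa$ yields constants $0 < c_1 < c_2 < \infty$ (depending only on $R, d, \kappa$) such that
\[
\frac{c_1}{h} \leq E_R^*(h) \leq \frac{c_2}{h}.
\]
Item~(i) then follows from the monotonicity of $\bar{F}$. For items (ii) and (iii) I would differentiate the expression $\bar{F}(h + 2d\kappa - E_R^*(h))$ using the chain and product rules,
\[
\frac{d}{dh} \bar{F}(h + 2d\kappa - E_R^*(h)) = \bar{F}'\bigl(h + 2d\kappa - E_R^*(h)\bigr)\bigl(1 - (E_R^*)'(h)\bigr),
\]
and similarly for the second derivative. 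Lemma~\ref{erlang} with $n=1, 2$ produces bounds of the form $|(E_R^*)^{(n)}(h)| \asymp 1/h^{n+1}$ on $\{\widehat{\xi}_R < h^\alpha\}$; inserting the extremal values into the chain-rule expressions and invoking the monotonicity of $\bar{F}'$ and $\bar{F}''$ at the shifted arguments produces the claimed sandwich inequalities after collecting terms of order $1/(h+2d\kappa)^2$ and $1/(h+2d\kappa)^3$.

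The main obstacle is bookkeeping of two kinds. First, the restricting event $\{\widehat{\xi}_R < h^\alpha\}$ depends on $h$, so when differentiating in $h$ one must argue that the boundary contribution (the density of $\widehat{\xi}_R$ at $h^\alpha$ multiplied by $\bar{F}(h+2d\kappa-E_R^*(h))$ there) is negligible compared with the bulk contribution; this reduces to the rapid tail decay imposed by Assumption (F*) together with the growth rate $\alpha h^{\alpha-1}$ of the boundary. Second, one must track signs carefully: since $E_R^*$ is decreasing in $h$ the factor $1-(E_R^*)'(h)$ exceeds $1$, so the exact form of the correction factors $1-c_j/(h+2d\kappa)^2$ in items (ii) and (iii) must be obtained by matching this factor against the shift in the argument of $\bar{F}'$ and $\bar{F}''$, using that $\bar{F}'$ and $\bar{F}''$ are themselves monotone in the relevant range (a consequence of the ultimate convexity of $\varphi$).
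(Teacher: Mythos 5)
Your reduction to $\bar F$ and $E_R^*$ via Lemma~\ref{stochlamb}, and your use of Lemma~\ref{erlang} to bound $E_R^*$ and its derivatives by $c_j/(h+2d\kappa)^{n+1}$ on $\{\widehat\xi_R<h^\alpha\}$, is exactly the route the paper takes; the paper's proof is simply "substitute the deterministic estimates into the chain-rule expressions." Your additional attention to the two $h$-dependent events (the indicator $\mathbbm{1}_{\widehat\xi_R<h^\alpha}$ in the numerator \emph{and} the conditioning $\{\widehat\xi_R\le h-c\}$ in the denominator, both contributing boundary terms under differentiation) is a genuine point the paper glosses over; you should also mention the quotient-rule term from the denominator, not only the boundary at $h^\alpha$. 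Both corrections are indeed dominated by the bulk term because the density of $\widehat\xi_R$ at $h^\alpha$ carries an $\exp\{-\varphi(h^\alpha)\}$ factor and $\bar F(h)\bar F'(h-c)/\bar F'(h)\to0$ under Assumption (F*), so their negligibility is a real estimate to record, not a triviality.

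The sign issue you raise is the one substantive gap in your plan, and you should follow it to its conclusion rather than defer it. As you correctly note, $E_R^*$ is strictly decreasing, so $(E_R^*)'(h)<0$ and $1-(E_R^*)'(h)>1$; combined with Lemma~\ref{erlang} (which, tracking the $(-1)^n$ from $\frac{{\rm d}^n}{{\rm d}h^n}{\rm e}^{-h\tau_0}=(-\tau_0)^n{\rm e}^{-h\tau_0}$, actually bounds $(-1)^n$ times the $n$-th derivative), the chain-rule factor lies in $\bigl[1+c_1/(h+2d\kappa)^2,\,1+c_2/(h+2d\kappa)^2\bigr]$, \emph{not} $\bigl[1-c_2/(h+2d\kappa)^2,\,1-c_1/(h+2d\kappa)^2\bigr]$. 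Your idea of "matching this factor against the shift in the argument of $\bar F'$" does not restore the stated $1-c_j/(h+2d\kappa)^2$ form, because that shift is already fixed at $c_j/h$ by item~(i). So the clean outcome of your own sign tracking is that the factors in items~(ii) and (iii) should read $1+c_j/(h+2d\kappa)^2$ (with $c_1,c_2$ swapped on the two sides accordingly); state this explicitly rather than leaving it as an unresolved bookkeeping concern.
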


\begin{proof}
By Lemma~\ref{erlang} we find $0<c_1\leq c_2<\infty$ such that for $|y|_1=1$, any large $h$, and $n\in\N$,
\begin{eqnarray}\label{ableq}
&&\frac{c_1}{(h+2d\kappa)^{n+1}}\nonumber\\
&\leq&\frac{{\rm d}^n}{{\rm d}h^n}\mathbb{E}_y^{R,*}\exp\Bigg\{\int\limits_0^{\tau_0}\left(\xi(X_{s})-h\right)\,{\rm d}s\Bigg\}\\
&\leq&\frac{c_1}{(h+2d\kappa-\widehat{\xi}_{R})^{n+1}}\leq\frac{c_2}{(h+2d\kappa-h^\alpha)^{n+1}}\leq\frac{c_2}{(h+2d\kappa)^{n+1}}.\nonumber
\end{eqnarray}
For large $h$, Lemma~\ref{stochlamb} yields
\[ \mathbf{P}\left(\lambda^{R,*}_{1}(\xi)>h,\widehat{\xi}_{R}<h^\alpha\Big|\widehat{\xi}_R\leq h-c\right)=\left<\bar{F}\left(h+2d-E_R^*(h)\right)\big|\widehat{\xi}_R\leq h^\alpha\right>.\]
The differentiation lemma guarantees that we can differentiate under the integral.
 \begin{enumerate}
  \item follows by substituting the deterministic estimates~\eqref{ableq} in \[\left<\bar{F}\left(h+2d\kappa-E_R^*(h)\right)\big|\widehat{\xi}_R\leq h^\alpha\right>.\]
\item holds because
\begin{eqnarray*}
 &&\frac{{\rm d}}{{\rm d}h}\left<\bar{F}\big(h+2d\kappa-E_R^*(h)\big)\big|\widehat{\xi}_R\leq h^\alpha\right>\\
&=&\left<\bar{F}'\left(h+2d\kappa-E_R^*(h)\right)\left(1-(E_R^*)'(h)\right)\big|\widehat{\xi}_R\leq h^\alpha\right>.
\end{eqnarray*}
Now the claim follows by substituting the deterministic estimates~\eqref{ableq}.
\item holds because
\begin{eqnarray*}
 &&\frac{{\rm d}^2}{{\rm d}h^2}\left<\bar{F}\big(h+2d\kappa-E_R^*(h)\big)\big|\widehat{\xi}_R\leq h^\alpha\right>\\
&=&\left<\bar{F}'\big(h+2d\kappa-E_R^*(h)\big)\left(1-(E_R^*)'(h)^2\right)^2\right.\\
&& +\left.(E_R^*)''(h)\bar{F}'\big(h+2d\kappa-E_R^*(h)\big)\big|\widehat{\xi}_R\leq h^\alpha\right>.
\end{eqnarray*}
Now the claim follows by substituting the deterministic estimates~\eqref{ableq}.
 \end{enumerate}
\end{proof}
\noindent

\section{Exact moment asymptotics}\label{Exact moment asymptotics}
In this section we apply Theorem~\ref{main} to compute exact moment asymptotics via Theorem~\ref{exmomw}. 
To do so we need to impose Assumption (F*) which is a slightly stronger condition than Assumption (F).

\begin{lemma}\label{abal}
 Let Assumption (F*) be satisfied. Then there exists $\alpha\in (0,1)$ such that for any $R>0$ and $c>2d\kappa$,
\begin{equation}\label{f*}
 \frac{\left<\bar{F}\Big(h+2d\kappa-E_R^*(h)\Big)\mathbbm{1}_{\widehat{\xi}_R> h^\alpha}\Big|\widehat{\xi}_R\leq h-c\right>}{\left<\bar{F}\Big(h+2d\kappa-E_R^*(h)\Big)\mathbbm{1}_{\widehat{\xi}_R\leq h^\alpha}\Big|\widehat{\xi}_R\leq h-c\right>}\stackrel{h\to\infty}{\longrightarrow}0.
\end{equation}
\end{lemma}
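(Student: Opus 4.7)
The key observation is that on the conditioning event $\{\widehat{\xi}_R\leq h-c\}$ with $c>2d\kappa$, the quantity $E_R^*(h)$ is uniformly bounded, in both $h$ and the realisation of $\xi$, by a constant strictly less than $2d\kappa$. This would let me dominate $\bar{F}(h+2d\kappa-E_R^*(h))$ by $\bar{F}(h)$, reducing the lemma to a direct application of Assumption (F*)(i).

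First I would establish the uniform bound on $E_R^*(h)$. For a neighbour $y$ of $0$ and $s\in[0,\tau_0)$ the walk satisfies $X_s\in Q_R\setminus\{0\}$, hence $\xi(X_s)\leq\widehat{\xi}_R\leq h-c$ on the conditioning event, so $\xi(X_s)-h\leq -c$ pointwise. Consequently
\[
\mathbb{E}_y^{R,*}\exp\Bigg\{\int_0^{\tau_0}\left(\xi(X_s)-h\right)\,{\rm d}s\Bigg\}\leq\mathbb{E}_y^{R,*}e^{-c\tau_0}\leq\mathbb{E}\,e^{-c\sigma_1}=\frac{2d\kappa}{2d\kappa+c}<\frac{1}{2},
\]
where I used $\tau_0\geq\sigma_1\sim\mathrm{Exp}(2d\kappa)$ and $c>2d\kappa$. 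Summing over the $2d$ neighbours of the origin gives $E_R^*(h)\leq d\kappa$ on the conditioning event, hence $h+2d\kappa-E_R^*(h)\geq h+d\kappa>h$, and therefore by monotonicity of $\bar{F}$,
\[
\bar{F}\big(h+2d\kappa-E_R^*(h)\big)\leq\bar{F}(h).
\]

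With this in hand I would bound the denominator from below using the trivial $E_R^*(h)\geq 0$, giving $\bar{F}(h+2d\kappa-E_R^*(h))\geq\bar{F}(h+2d\kappa)$, combined with the observation that the conditional probability $\mathbf{P}(\widehat{\xi}_R\leq h^\alpha\mid\widehat{\xi}_R\leq h-c)$ tends to $1$: by a union bound $\mathbf{P}(\widehat{\xi}_R>h^\alpha)\leq(|Q_R|-1)\bar{F}(h^\alpha)\to 0$, while $\mathbf{P}(\widehat{\xi}_R\leq h-c)\to 1$. This yields a denominator bounded below by $(1-o(1))\bar{F}(h+2d\kappa)$. Applying the previous step together with a further union bound gives a numerator bounded above by $(1+o(1))(|Q_R|-1)\bar{F}(h)\bar{F}(h^\alpha)$.

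Dividing, the expression in the lemma is controlled by $C(R)\bar{F}(h)\bar{F}(h^\alpha)/\bar{F}(h+2d\kappa)$, which tends to zero by Assumption (F*)(i) with $\alpha$ as furnished by that assumption. The step I expect to be the most delicate is the uniform estimate $E_R^*(h)\leq d\kappa$: the hypothesis $c>2d\kappa$ is precisely what provides the surplus keeping $h+2d\kappa-E_R^*(h)$ strictly above $h$, so that the argument of $\bar{F}$ in the numerator matches the left-hand side of (F*)(i). A looser bound that permitted $E_R^*(h)\geq 2d\kappa$ would leave a residual discrepancy in the argument of $\bar{F}$, and (F*)(i) alone would no longer suffice without additional regularity on $\bar{F}$.
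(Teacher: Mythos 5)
Your proof is correct and follows the same route as the paper: bound the numerator's integrand above by $\bar F(h)$ (since $E_R^*(h)\le 2d\kappa$ on the conditioning event), bound the denominator's integrand below by $\bar F(h+2d\kappa)$ (since $E_R^*(h)\ge 0$), replace the indicator probabilities by $(|Q_R|-1)\bar F(h^\alpha)$ and $1-o(1)$, and invoke Assumption (F*)(i). One small point: the refinement $E_R^*(h)\le d\kappa$ via the Laplace transform of $\sigma_1$ is more than you need; the trivial observation that $\xi(X_s)-h\le -c<0$ on $\{\widehat\xi_R\le h-c\}$ already gives $\mathbb{E}_y^{R,*}[\cdots]\le 1$ and hence $E_R^*(h)\le 2d\kappa$, which is all the numerator bound requires. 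In particular your closing remark that $c>2d\kappa$ is ``precisely what provides the surplus'' is a mild misattribution --- any $c>0$ suffices for this lemma, and the stronger hypothesis $c>2d\kappa$ is imposed for use elsewhere in the paper (e.g.\ Lemmas \ref{problamb} and \ref{eigenexp}, where $\xi(x)-\lambda_1^{R,*}<0$ must hold off the origin).
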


\begin{proof}
 Asymptotically, we find
\[
 \text{lhs of \eqref{f*} }\leq\frac{\bar{F}(h)\mathbf{P}\left(\widehat{\xi}_R> h^\alpha\right)}{\bar{F}(h+2d\kappa)\mathbf{P}\left(\widehat{\xi}_R\leq h^\alpha\right)}
=\frac{\bar{F}(h)\bar{F}(h^\alpha)}{\bar{F}(h+2d\kappa)}\underbrace{\frac{\left(| Q_{R}|-1\right)}{\left(1-{\rm e}^{-h^{\alpha\gamma}}\right)^{| Q_{R}|-1}}}_{\stackrel{h\rightarrow\infty}{\longrightarrow}1}\stackrel{h\rightarrow\infty}{\longrightarrow}0.\\
\]
\end{proof}
\noindent
In the remainder of this section we will give explicit results for the case that the tails of $\xi(0)$ have a Weibull distribution (i.e. $\bar{F}(h)=\exp\left\{-h^\gamma\right\}$) with parameter $\gamma>1$. One easily checks that Weibull tails satisfy Assumption (F*) for $\alpha>(\gamma-1)/\gamma$.

\begin{lemma}\label{weibg}
Let $\xi\sim$ Weibull ($\gamma$). Then for any $R>0$ and $c>2d\kappa$,
 \begin{eqnarray*}
 &&\mathbf{P}\left(\lambda^{R,{\rm f}}_{1}(\xi)>h\Big|\widehat{\xi}_R\leq h-c\right)\\
&=&\exp\left\{-(h+2d\kappa)^\gamma+2d\kappa^2\gamma(h+2d\kappa)^{\gamma-2}+\mathcal{O}\left((h+2d\kappa)^{\gamma-3}\right) \right\},\qquad h\rightarrow\infty.
\end{eqnarray*}
\end{lemma}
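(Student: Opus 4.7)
My plan is to combine Lemma~\ref{stochlamb} with Lemma~\ref{abal} and carry out a Taylor expansion tailored to the Weibull tail $\bar{F}(x)=e^{-x^\gamma}$.

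First, by Lemma~\ref{stochlamb}, the conditional probability equals $\big\langle \bar{F}(h+2d\kappa-E_R^{\rm f}(h))\,\big|\,\widehat{\xi}_R\leq h-c\big\rangle$. Since Weibull tails satisfy Assumption~(F*) for any $\alpha\in\bigl((\gamma-1)/\gamma,1\bigr)$, I would use Lemma~\ref{abal} together with the fact $\mathbf{P}(\widehat{\xi}_R\leq h^\alpha)/\mathbf{P}(\widehat{\xi}_R\leq h-c)\to 1$ as $h\to\infty$ to reduce the task to computing
\[
\Big\langle \bar{F}\bigl(h+2d\kappa-E_R^{\rm f}(h)\bigr)\,\Big|\,\widehat{\xi}_R\leq h^\alpha\Big\rangle.
\]

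Next, on the event $\{\widehat{\xi}_R\leq h^\alpha\}$ I would apply Lemma~\ref{erlang} with $n=0$: for each nearest neighbour $y$ of the origin the $k=0$ summand in the Erlang expansion equals $\tfrac{1}{2d}\cdot\tfrac{2d\kappa}{h+2d\kappa-\xi(y)}$; summing over the $2d$ neighbours and multiplying by the outer $\kappa$ yields $\tfrac{2d\kappa^2}{h+2d\kappa}$ up to an $\mathcal{O}(h^{\alpha-2})$ correction from the $\xi(y)$-dependence, while the terms $k\geq 1$ contribute only $\mathcal{O}(h^{-3})$. Hence
\[
E_R^{\rm f}(h)=\frac{2d\kappa^2}{h+2d\kappa}+\mathcal{O}(h^{\alpha-2})\qquad\text{on }\{\widehat{\xi}_R\leq h^\alpha\}.
\]
Writing $u:=h+2d\kappa$ and $E:=E_R^{\rm f}(h)=\mathcal{O}(1/u)$, Taylor's theorem then gives
\[
(u-E)^\gamma=u^\gamma(1-E/u)^\gamma=u^\gamma-\gamma u^{\gamma-1}E+\mathcal{O}(u^{\gamma-2}E^2),
\]
so substituting the previous expansion and using $E^2=\mathcal{O}(u^{-2})$ yields $(u-E)^\gamma=u^\gamma-2d\kappa^2\gamma\,u^{\gamma-2}+\mathcal{O}(u^{\gamma-3})$, from which the claim follows upon exponentiating and averaging.

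The main obstacle is that the naive bound $\gamma u^{\gamma-1}\cdot\mathcal{O}(h^{\alpha-2})=\mathcal{O}(u^{\gamma+\alpha-3})$ is larger than the asserted $\mathcal{O}(u^{\gamma-3})$ error when $\alpha$ is close to $1$. The resolution is to observe that under the conditioning $\widehat{\xi}_R\leq h^\alpha$ the typical values of $\xi(y)$ are of order $1$, so the conditionally averaged correction to $E_R^{\rm f}(h)$ is actually of order $\mathcal{O}(u^{-2})$; the atypical values with $\widehat{\xi}_R$ close to $h^\alpha$ contribute a negligible amount, by the same Weibull-tail calculation underpinning Lemma~\ref{abal}.
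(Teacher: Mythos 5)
Your proposal follows the same route as the paper: Lemma~\ref{stochlamb} to rewrite the conditional probability as an expectation of $\bar F(h+2d\kappa-E_R^{\rm f}(h))$, Lemma~\ref{abal} to restrict to $\{\widehat{\xi}_R\leq h^\alpha\}$, the $k=0$ term of Lemma~\ref{erlang} to extract $E_R^{\rm f}(h)=\tfrac{2d\kappa^2}{h+2d\kappa}+\text{corrections}$, and a Taylor expansion of $(h+2d\kappa-E)^\gamma$. The obstacle you flag is real, and your resolution is exactly the paper's: it writes the correction as $\sum_{|y|_1=1}\frac{h+2d\kappa}{h+2d\kappa-\xi(y)}=2d+\sum_{|y|_1=1}\xi(y)\big((h+2d\kappa)^{-1}+\cdots\big)$, so that the extra term enters multiplied by $\xi(y)$, whose conditional expectation is $\mathcal{O}(1)$ rather than the crude bound $h^\alpha$. (Both you and the paper leave somewhat implicit the final step of passing the $\xi(y)$-dependence through the exponential and averaging, which is harmless here because that random contribution sits at order $(h+2d\kappa)^{\gamma-3}$.)
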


\begin{proof}
It follows from Lemmas~\ref{stochlamb}, ~\ref{erlang} and \ref{abal} and because the lower and the upper bound in Lemma~\ref{erlang} are asymptotically equivalent on a exponential scale that for $\alpha>(\gamma-1)/\gamma$,
\begin{eqnarray*}
 &&\mathbf{P}\left(\lambda^{R,{\rm f}}_{1}(\xi)>h\Big|\widehat{\xi}_R\leq h-c\right)\\
&\sim&\left<\bar{F}\Big(h+2d\kappa-E_R^*(h)\Big)\mathbbm{1}_{\widehat{\xi}_R\leq h^\alpha}\Big|\widehat{\xi}_R\leq h-c\right>\\
&=&\exp\left\{-(h+2d\kappa)^\gamma+2d\kappa^2\gamma(h+2d\kappa)^{\gamma-2}+\mathcal{O}\left((h+2d\kappa)^{\gamma-3}\right) \right\},\qquad h\rightarrow\infty.
\end{eqnarray*}
In the last line we use that
\begin{eqnarray*}
&& \Bigg(h+2d\kappa-\kappa\sum\limits_{|y|_1=1}\mathbb{E}_{y}^{R,{\rm f}}\exp\Bigg\{\int\limits_{0}^{\tau_0}\left(\xi\left(X_{s}\right)-h\right)\,{\rm d}s\Bigg\}\Bigg)^\gamma\\
&=&\left(h+2d\kappa\right)^\gamma-\gamma\left(h+2d\kappa\right)^{\gamma-2}\sum\limits_{|y|_1=1}\frac{h+2d\kappa}{h+2d\kappa-\xi(y)}+\mathcal{O}\big((h+2d\kappa)^{\gamma-4}\big)
\end{eqnarray*}
and
 \[\frac{h+2d\kappa}{h+2d\kappa-\xi(y)}=\sum\limits_{n=0}^{\infty}\left(\frac{\xi(y)}{h+2d\kappa}\right)^n=1+\xi(y)\left((h+2d\kappa)^{-1}+\mathcal{O}\left(h^\alpha(h+2d\kappa)^{\gamma-4}\right)\right).\]
\end{proof}
\noindent
\begin{lemma}\label{probequi}
 Let Assumption (F*) be satisfied. Then there exists $R_0=R_0(\bar{F})$ such that for every $R>R_0$ and $c>2d\kappa$,
\[\mathbf{P}\left(\lambda^{R,{\rm f}}_{1}(\xi)>h\Big|\widehat{\xi}_R\leq h-c\right)\sim\mathbf{P}\left(\lambda^{R,0}_{1}(\xi)>h\Big|\widehat{\xi}_R\leq h-c\right),\qquad h\rightarrow\infty.\]
\end{lemma}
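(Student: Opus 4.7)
The strategy is to reduce both sides to expectations of the same form via the formula of Lemma~\ref{stochlamb} and its zero-boundary analog (which follows verbatim from case~$ii)$ of Lemma~\ref{problamb}), obtaining $\langle \bar F(h+2d\kappa-E_R^{*}(h))\mid\widehat\xi_R\leq h-c\rangle$ for $*\in\{{\rm f},0\}$. By Lemma~\ref{abal} I may further restrict both expectations to the event $\{\widehat\xi_R\leq h^\alpha\}$ at a negligible cost, so the task reduces to showing that $\bar F(h+2d\kappa-E_R^{\rm f}(h))$ and $\bar F(h+2d\kappa-E_R^0(h))$ are pointwise asymptotically equivalent on that event.

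\textbf{Key estimate.} I would couple the free- and zero-boundary walks using a common Poisson clock and common jump directions, so that they coincide until the walk first attempts to leave $Q_R$. The difference $\Delta_R(h):=E_R^{\rm f}(h)-E_R^0(h)\geq 0$ then captures exactly the free-walk contribution from paths that visit $\partial Q_R$ before returning to the origin, and each such path requires at least $R-1$ jumps. On $\{\widehat\xi_R\leq h^\alpha\}$ the exponent $\xi(X_s)-h$ is bounded above by $-(h-h^\alpha)$, so, conditioning on the total jump number $N$ and using $\tau_0=\sum_{k=1}^N\sigma_k$ with $\sigma_k\stackrel{\mathrm{iid}}{\sim}\mathrm{Exp}(2d\kappa)$ independent of $N$ as in Lemma~\ref{erlang}, a direct computation yields
\[
\Delta_R(h)\leq 2d\kappa\left(\frac{2d\kappa}{2d\kappa+h-h^\alpha}\right)^{R-1}=\mathcal{O}\!\left(h^{-(R-1)}\right),\qquad h\to\infty,
\]
uniformly on that event. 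Since Lemma~\ref{erlang} gives $E_R^0(h)=\Theta(1/h)$, this perturbation is eventually tiny compared to $E_R^0(h)$ itself.

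\textbf{Finishing the argument.} I would conclude by a first-order expansion of $\bar F$: on $\{\widehat\xi_R\leq h^\alpha\}$,
\[
\frac{\bar F(h+2d\kappa-E_R^{\rm f}(h))}{\bar F(h+2d\kappa-E_R^0(h))}=1+\frac{|\bar F'(h^{\ast})|\,\Delta_R(h)}{\bar F(h+2d\kappa-E_R^0(h))}
\]
for some intermediate $h^{\ast}$ close to $h$. Writing $|\bar F'|/\bar F=\varphi'$ and exploiting Assumption~(F*)~$i)$ --- which in differential form gives $\varphi'(h)=o(\varphi(h^\alpha))$ --- the error term is $\mathcal{O}(\varphi(h^\alpha)\,h^{-(R-1)})$, which tends to $0$ as $h\to\infty$ for any $R$ exceeding a threshold $R_0(\bar F)$ tuned to the growth of $\varphi(h^\alpha)$. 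Integrating the pointwise equivalence on the restricted event then yields the lemma.

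\textbf{Main obstacle.} The delicate step is precisely the last one: Assumption~(F*) permits $\bar F$ to decay almost doubly exponentially, so $\varphi'$ can be very large and the Taylor remainder is a priori not small. The proof must essentially rest on the fact that the polynomial-in-$h$ gain $h^{-(R-1)}$ \emph{improves} with $R$, and on a careful choice of the threshold $R_0(\bar F)$ so that this gain dominates the possibly super-polynomial growth of $\varphi(h^\alpha)$.
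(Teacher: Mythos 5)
Your proposal follows essentially the same strategy as the paper's proof: pass from eigenvalue probabilities to expectations of $\bar F$ via Lemma~\ref{stochlamb}, restrict to $\{\widehat\xi_R\le h^\alpha\}$ via Lemma~\ref{abal}, observe that the difference $E_R^{\rm f}(h)-E_R^0(h)$ is contributed only by paths that exit $Q_R$ before returning to the origin (hence many jumps, each costing a factor $\mathcal{O}(1/h)$), and conclude by expanding $\bar F$. The paper gets this polynomial bound by discarding the terms $k<R$ in the $\sigma$-decomposition of Lemma~\ref{erlang}, whereas you run the same counting via an explicit path coupling; these are essentially the same computation, and your slightly weaker exponent $h^{-(R-1)}$ versus the paper's $h^{-2-2R}$ is immaterial since both improve linearly with $R$.

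One point worth sharpening: you flag as the "main obstacle" that $\varphi(h^\alpha)$ might grow super-polynomially, in which case no finite $R_0$ would beat it. In fact Assumption~(F*)~i) together with convexity of $\varphi$ \emph{forces} $\varphi$ (hence $\varphi'$ and $\varphi(h^\alpha)$) to grow at most polynomially: $\bar F(h)\bar F(h^\alpha)=o(\bar F(h+2d\kappa))$ gives $\varphi(h+2d\kappa)-\varphi(h)\le\varphi(h^\alpha)$ eventually, so $\varphi'(h)\lesssim\varphi(h^\alpha)$, and iterating the resulting functional inequality for $g(y)=\log\varphi(e^y)$ yields $g(y)=O(y)$, i.e.\ $\varphi(h)=O(h^C)$. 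This is what makes the threshold $R_0(\bar F)$ finite and closes the gap you correctly identified; the paper's remark that $R_0>(\gamma-1)/2$ suffices for Weibull($\gamma$) is the concrete instance. The paper's own proof is actually terser than yours --- it stops after establishing the $\mathcal{O}(h^{-2-2R})$ estimate without writing out the Taylor step --- so your elaboration of that final comparison is a useful addition rather than a deviation.
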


\begin{proof}
 Using the representation from Lemma~\ref{stochlamb}, we see that we only have to look at those paths with $\tau_0\geq\tau_{\left( Q_R\right)^{\rm c}}$ in the left hand side of \eqref{summe} in the proof of Lemma~\ref{erlang}. 
In this case the random walk $X$ must jump at least $2R+1$ times until it reaches the origin for the first time.
Therefore, we have to consider only those summands in the right hand side of \eqref{summe} where $k\geq R$. Together with Lemma~\ref{abal} this yields 
\[\mathbb{E}_{y}\exp\Bigg\{\int\limits_{0}^{\tau_0}\left(\xi\left(X_{s}\right)-h\right)\,{\rm d}s\Bigg\}\mathbbm{1}_{\tau_{0}\geq\tau_{\left( Q_{R}\right)^{\rm c}}}\mathbbm{1}_{\widehat{\xi}_R\leq h-c}=\mathcal{O}\left(h^{-2-2R}\right).\]
\end{proof}
\begin{remark}
 If $\xi\sim\text{Weibull}(\gamma)$, $\gamma>1$, then we can choose $R_0>\frac{\gamma-1}{2}$.
\end{remark}

Now we would like to use a variant of the Laplace method to calculate exact moment asymptotics. This is possible due to the strong Tauberian theorem, Theorem~\ref{strongt}, in the spirit of \cite{FY83}. Recall that $\varphi=-\log\bar{F}$.

\begin{theorem}\label{strongt}
 Let $\varphi\in C^2$ be ultimately convex and Condition (B) be satisfied. Then%or Condition (B*) 
\[\exp\left\{H(t)\right\}\sim \exp\left\{th_t-\varphi(h_t)\right\}\sqrt{\frac{2\pi}{\varphi''(h_t)}},\qquad t\rightarrow\infty.\]
\end{theorem}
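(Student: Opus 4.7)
The plan is to apply the classical Laplace (saddle-point) method to an integral representation of $e^{H(t)}=\langle e^{t\xi(0)}\rangle$. Since $\xi(0)\geq 0$ a.s.\ with tail $\bar F(h)=e^{-\varphi(h)}$ and $\varphi\in C^2$, the density of $\xi(0)$ is $\varphi'(h)\,e^{-\varphi(h)}$, so
\[
 e^{H(t)}=\int_0^\infty \varphi'(h)\,e^{th-\varphi(h)}\,dh,
\]
with the equivalent form $e^{H(t)}=1+t\int_0^\infty e^{th-\varphi(h)}\,dh$ available by integration by parts. The exponent $g(h):=th-\varphi(h)$ has, thanks to the ultimate convexity of $\varphi$, a unique maximiser $h_t$ for all large $t$; it is characterised by $\varphi'(h_t)=t$ and satisfies $g''(h_t)=-\varphi''(h_t)<0$, which is the saddle point one expands around.

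Second, I would localise the integral to the window $J_t:=\bigl[h_t-A/\sqrt{\varphi''(h_t)},\,h_t+A/\sqrt{\varphi''(h_t)}\bigr]$ for a constant $A$ to be sent to infinity, substitute $u=(h-h_t)\sqrt{\varphi''(h_t)}$, and use the Taylor expansion
\[
 th-\varphi(h)=th_t-\varphi(h_t)-\tfrac12\varphi''(h_t)(h-h_t)^2+r(h)
\]
to reduce the integrand on $J_t$ to a Gaussian $e^{-u^2/2}$ times the rescaled prefactor and remainder. The tail contribution from $J_t^{\rm c}$ is controlled using convexity of $\varphi$ (which yields $g(h)-g(h_t)\lesssim -\varphi''(h_t)(h-h_t)^2$ slightly beyond $J_t$) together with the growth $\varphi(h)/h\to\infty$ implied by $H(t)<\infty$; these combine to make the complement exponentially smaller than the Gaussian term as $A\to\infty$, and the Gaussian integral $\int_\R e^{-u^2/2}\,du=\sqrt{2\pi}$ then produces the factor $\sqrt{2\pi/\varphi''(h_t)}$ in the stated asymptotic.

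The main obstacle is showing that the Taylor remainder $r(h)$ is $o(1)$ uniformly on $J_t$ after the rescaling, so that the Gaussian factor truly governs the integral. This is exactly where Condition~(B) enters: differentiating the identity $\varphi'(h_t)=t$ yields $h_t'(t)=1/\varphi''(h_t)$, and the self-neglecting property~\eqref{locunifconv} of $t\mapsto\sqrt{\varphi''(h_t)}$ forces $\varphi'''(h_t)=o\bigl((\varphi''(h_t))^{3/2}\bigr)$; the cubic Taylor term $\tfrac16\varphi'''(\cdot)(h-h_t)^3$ on $J_t$ is then of order $\varphi'''(h_t)/(\varphi''(h_t))^{3/2}=o(1)$, and similarly $\varphi''(h)\sim\varphi''(h_t)$ and $\varphi'(h)\sim\varphi'(h_t)=t$ locally uniformly on $J_t$, so the density prefactor may be replaced by its saddle-point value up to a negligible error. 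Without Condition~(B) the curvature $\varphi''$ could fluctuate on the Gaussian scale and the expansion would break down. The remaining work—dominated convergence in the window, handling the boundary term at $h=0$ from integration by parts, and quoting~\cite{FY83} for the precise constant—is then routine bookkeeping.
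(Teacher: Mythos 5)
The paper does not actually prove Theorem~\ref{strongt}: it is stated and attributed to the literature (``in the spirit of \cite{FY83}''), with no argument supplied. Your sketch therefore cannot be checked against a paper proof, but the Laplace/saddle-point strategy you outline is indeed the standard route to this kind of strong Tauberian statement, and the overall architecture — integral representation of $e^{H(t)}$, localisation to a $1/\sqrt{\varphi''(h_t)}$-window around the saddle, Gaussian approximation, tail estimate via convexity and $\varphi(h)/h\to\infty$ — is sound.

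There is, however, one genuine slip in how you invoke Condition~(B). You control the Taylor remainder by asserting that self-neglection of $t\mapsto\sqrt{\varphi''(h_t)}$ ``forces $\varphi'''(h_t)=o\bigl((\varphi''(h_t))^{3/2}\bigr)$'' and then bound the cubic term $\tfrac16\varphi'''(\cdot)(h-h_t)^3$. But the hypothesis is only $\varphi\in C^2$; a third derivative need not exist, so this step as written is vacuous. The fix stays within $C^2$: write the Lagrange form of the second-order remainder,
\[
th-\varphi(h)=th_t-\varphi(h_t)-\tfrac12\,\varphi''\bigl(h_t+\theta(h-h_t)\bigr)(h-h_t)^2,\qquad \theta\in(0,1),
\]
and use Condition~(B) directly to show $\varphi''\bigl(h_t+\theta(h-h_t)\bigr)\sim\varphi''(h_t)$ uniformly for $|h-h_t|\le A/\sqrt{\varphi''(h_t)}$. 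This is precisely what the self-neglecting property of $\sqrt{\varphi''(h_t)}$ (together with $h_t'=1/\varphi''(h_t)$) delivers, with no appeal to $\varphi'''$. The same local-uniformity also handles the replacement $\varphi'(h)\sim\varphi'(h_t)=t$ in the density prefactor, or can be avoided entirely by using your integration-by-parts form $e^{H(t)}=1+t\int_0^\infty e^{th-\varphi(h)}\,dh$, which dispenses with the density altogether and is the cleaner starting point. With that repair, the proposal gives a complete and correct proof.
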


\begin{remark}
 It follows that
\[H'(t)\sim h_t \text{ and }H''(t)\sim (h_t)'=1/\varphi''(h_t),\qquad t\rightarrow\infty.\]
\end{remark}
\begin{proof}[Proof of Theorem~\ref{exmomw}]
 We find that
all conditions of Theorem~\ref{strongt} are satisfied. Therefore, the claim follows together with Theorem~\ref{main}, where we can take $p=1$, $f_1(x)=x^p$ and $t_1(t)=t$.
 %\qed
\end{proof}
\noindent
In particular we find together with Lemma \ref{weibg} and an asymptotic expansion of ${h}^{R,*}_t$ for Weibull tails:

\begin{corollary}\label{exweib}
Let $\xi(0)\sim\text{Weibull }(\gamma)$, $\gamma>1$ and $p\in(0,\infty)$. Then for $t\rightarrow \infty$,
\begin{eqnarray*}
\left<u(t,0)^p\right>&=&
\exp\bigg\{\left(\gamma-1\right)\left(\frac{p}{\gamma}t\right)^{\frac{\gamma}{\gamma-1}}-2d\kappa p t+2d\kappa^2\gamma\left(\frac{p}{\gamma}t\right)^{\frac{\gamma-2}{\gamma-1}}\\
&&\qquad+\log p t+\frac{1}{2}\log\frac{2\pi}{\gamma(\gamma-1)}\left(\frac{p}{\gamma}t\right)^{-\frac{\gamma-2}{\gamma-1}}+\mathcal{O}\left(t^{\frac{\gamma-3}{\gamma-1}}\right)\bigg\}.
\end{eqnarray*}
\end{corollary}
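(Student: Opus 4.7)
The plan is to specialize Theorem~\ref{exmomw} to Weibull tails, where $\varphi(h)=h^\gamma$ and $\bar F(h)={\rm e}^{-h^\gamma}$, by feeding the explicit expansion of $\varphi_R^*$ from Lemma~\ref{weibg} into the formula given by the theorem. First I would verify the hypotheses: $\varphi\in C^2$ is strictly convex on $(0,\infty)$; Assumption (F*)(i) holds for any $\alpha\in((\gamma-1)/\gamma,1)$ (as remarked before Lemma~\ref{weibg}); and $\sqrt{\varphi''(h_t)}$ is a regularly varying power of $t$ with exponent $(\gamma-2)/(2(\gamma-1))<1$, so it is self-neglecting and Condition (B) holds.

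Next, set $H:=h+2d\kappa$ and read off from Lemma~\ref{weibg} that
\[
\varphi_R^*(h)=H^\gamma-2d\kappa^2\gamma H^{\gamma-2}+\mathcal O\bigl(H^{\gamma-3}\bigr).
\]
Solving $pt=(\varphi_R^*)'(h_{pt}^{R,*})$ perturbatively around the leading-order maximiser $H_0:=(pt/\gamma)^{1/(\gamma-1)}$ gives $H=H_0(1+\varepsilon)$ with $\varepsilon=\mathcal O(H_0^{-2})$. Because $h_{pt}^{R,*}$ is a critical point, first-order perturbations of $H$ in $\psi_R^*(pt)=ptH-2d\kappa pt-H^\gamma+2d\kappa^2\gamma H^{\gamma-2}+\mathcal O(H^{\gamma-3})$ cancel; the quadratic remainder is of order $H_0^{\gamma-2}\varepsilon^2=\mathcal O(H_0^{\gamma-4})$, smaller than the explicit remainder. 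Substituting $H_0$ and using $H_0^\gamma=(pt/\gamma)^{\gamma/(\gamma-1)}$ and $H_0^{\gamma-2}=(pt/\gamma)^{(\gamma-2)/(\gamma-1)}$ yields
\[
\psi_R^*(pt)=(\gamma-1)(pt/\gamma)^{\gamma/(\gamma-1)}-2d\kappa pt+2d\kappa^2\gamma(pt/\gamma)^{(\gamma-2)/(\gamma-1)}+\mathcal O\bigl(t^{(\gamma-3)/(\gamma-1)}\bigr).
\]
For the Gaussian factor, differentiate twice to obtain $(\varphi_R^*)''(h)=\gamma(\gamma-1)H^{\gamma-2}\bigl(1+\mathcal O(H^{-2})\bigr)$, evaluate at $h_{pt}^{R,*}$, and take logarithms; the multiplicative error feeds into the same $\mathcal O(t^{(\gamma-3)/(\gamma-1)})$ remainder. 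Collecting the contributions gives exactly the claimed expression.

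The main obstacle is the bookkeeping of the three sources of error — the $\mathcal O(H^{\gamma-3})$ term inside $\varphi_R^*$, the quadratic shift from replacing the exact maximiser $h_{pt}^{R,*}$ by $H_0-2d\kappa$, and the multiplicative correction in $(\varphi_R^*)''$ — together with keeping the exponents in the various powers of $(pt/\gamma)$ straight so that everything consolidates into the single remainder $\mathcal O(t^{(\gamma-3)/(\gamma-1)})$.
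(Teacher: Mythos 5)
Your proposal is correct and follows exactly the route the paper leaves implicit: substitute the expansion of $\varphi_R^*$ from Lemma~\ref{weibg} into Theorem~\ref{exmomw}, expand $h^{R,*}_{pt}$ perturbatively around the leading-order maximiser $(pt/\gamma)^{1/(\gamma-1)}-2d\kappa$, and use stationarity to eliminate the first-order shift so that only the $\mathcal{O}(H_0^{\gamma-3})$ error inside $\varphi_R^*$ survives. One small arithmetic slip: with your relative perturbation $\varepsilon=\mathcal{O}(H_0^{-2})$, the absolute shift is $H_0\varepsilon$ and the quadratic Taylor remainder is $(\varphi_R^*)''\cdot(H_0\varepsilon)^2=\mathcal{O}(H_0^{\gamma}\varepsilon^2)=\mathcal{O}(H_0^{\gamma-4})$, not $H_0^{\gamma-2}\varepsilon^2$ as written, though the order you state is the correct one and is indeed dominated by the $\mathcal{O}(H_0^{\gamma-3})$ remainder.
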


\section{Relevant potential peaks and intermittency}\label{Relevant potential peaks}
In this section we prove Theorem~\ref{relpo} which tells us what the potential peaks that contribute to the intermittency picture look like and how frequently they occur.
Fix $R>0$ and let
\[\widetilde{\Upsilon}_t^a:=\left[h^{R,*}_t-\frac{a}{\sqrt{( \varphi_R^*)''(h^{R,*}_t)}},h^{R,*}_t+\frac{a}{\sqrt{( \varphi_R^*)''(h^{R,*}_t)}}\right],\qquad a>0,\]
and recall that
\[\Upsilon_t^a=\left[h_t-\frac{a}{\sqrt{\varphi''(h_t)}},h_t+\frac{a}{\sqrt{\varphi''(h_t)}}\right],\qquad a>0.\]
Notice that $1/\varphi''(h_t)=(h_t)'$ and $1/( \varphi_R^*)''(h^{R,*}_t)=(h^{R,*}_t)'$.

\begin{lemma}\label{relen}
 Let Assumption (F) and Condition (B) be satisfied, and $R$ be sufficiently large. Then for every $\varepsilon>0$ there exists $a_\varepsilon$ such that% for every $a>c_0$ and $b<c_0$,
\[\lim\limits_{t\rightarrow\infty}\frac{\left<u(t,0)\mathbbm{1}_{\lambda_1^{R,*}\in\widetilde{\Upsilon}_{t}^a}\right>}{\left<u(t,0)\right>}\begin{cases}
>1-\varepsilon&\mbox{if }a>a_\varepsilon,\\
<1-\varepsilon&\mbox{if }a<a_\varepsilon.
\end{cases}
\]
\end{lemma}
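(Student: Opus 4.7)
The plan is to reduce both the numerator $\langle u(t,0)\mathbbm{1}_{\lambda^{R,*}_1\in\widetilde{\Upsilon}^a_t}\rangle$ and the denominator $\langle u(t,0)\rangle$ to Laplace-type integrals with integrand $t e^{th-\varphi_R^*(h)}$ over different ranges, and to read the dichotomy off the resulting Gaussian truncation ratio. First, I would apply Theorem~\ref{main} with $p=1$, $f_1(x)=x$ and $t_1(t)=t$ to the denominator. The same chain of arguments underlying Theorem~\ref{main} (Propositions~\ref{cut} and~\ref{lowup}, together with Lemmas~\ref{tot} and~\ref{tod}) produces an analogous two-sided bound for the truncated expectation: the approximation $u(t,0)\approx e^{t\lambda^{R,*}_1}(e^{R,*}_1,\mathbbm{1})e^{R,*}_1(0)$ from Proposition~\ref{lowup} commutes with the $\sigma(\xi)$-measurable indicator $\mathbbm{1}_{\lambda^{R,*}_1\in\widetilde{\Upsilon}^a_t}$, and the outer integrations in Lemmas~\ref{tot}--\ref{tod} are Stieltjes integrals in the law of $\lambda^{R,*}_1$. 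For sufficiently large $R$ the free and zero boundary versions coincide asymptotically by Lemma~\ref{probequi}. Up to multiplicative constants that may be chosen arbitrarily close to $1$, both sides of the ratio are thus asymptotic to
\[ J(A_t):=\int_{A_t} t e^{th}\,\mathbf{P}\!\left(\lambda^{R,*}_1>h\,\big|\,\widehat{\xi}_R\le h-c\right)\mathrm{d}h, \]
with $A_t=(c,\infty)$ for the denominator and $A_t=\widetilde{\Upsilon}^a_t$ for the numerator.

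Next, I would insert the asymptotic form of the conditional probability. By Lemma~\ref{abal} and the definition of $\varphi_R^*$,
\[ \mathbf{P}\!\left(\lambda^{R,*}_1>h\,\big|\,\widehat{\xi}_R\le h-c\right)=\exp\bigl\{-\varphi_R^*(h)(1+o(1))\bigr\}, \]
so around the maximiser $h^{R,*}_t$ of $h\mapsto th-\varphi_R^*(h)$, characterised by $(\varphi_R^*)'(h^{R,*}_t)=t$, Taylor's formula gives
\[ th-\varphi_R^*(h)=\psi_R^*(t)-\tfrac{1}{2}(\varphi_R^*)''(h^{R,*}_t)(h-h^{R,*}_t)^2+\mathcal{O}\!\bigl((h-h^{R,*}_t)^3\bigr). \]
The probabilistic representation of Lemma~\ref{stochlamb} expresses $\varphi_R^*(h)$ essentially as $\varphi(h+2d\kappa-E_R^*(h))$, and the bounds $E_R^*(h)=\mathcal{O}(h^{-1})$ with analogous control on its derivatives (Lemmas~\ref{erlang} and~\ref{ableit}) transfer Condition (B) from $\varphi$ to $\varphi_R^*$ for sufficiently large $R$. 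Hence $t\mapsto\sqrt{(\varphi_R^*)''(h^{R,*}_t)}$ is self-neglecting, which is precisely the uniformity required to validate the quadratic approximation on intervals of width $a/\sqrt{(\varphi_R^*)''(h^{R,*}_t)}$.

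Applying the Laplace method separately to $J((c,\infty))$ and to $J(\widetilde{\Upsilon}^a_t)$, the common prefactor $t e^{\psi_R^*(t)}/\sqrt{(\varphi_R^*)''(h^{R,*}_t)}$ cancels in the ratio and, after substituting $y=\sqrt{(\varphi_R^*)''(h^{R,*}_t)}(h-h^{R,*}_t)$, I obtain
\[ \lim_{t\to\infty}\frac{\langle u(t,0)\mathbbm{1}_{\lambda^{R,*}_1\in\widetilde{\Upsilon}^a_t}\rangle}{\langle u(t,0)\rangle}=\frac{1}{\sqrt{2\pi}}\int_{-a}^{a}e^{-y^2/2}\,\mathrm{d}y=:G(a). \]
Since the constants $\underline{C},\overline{C}$ in Theorem~\ref{main} may be taken arbitrarily close to $1$, they do not affect this limit. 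The function $G\colon(0,\infty)\to(0,1)$ is a continuous strictly increasing bijection, so $a_\varepsilon$ is uniquely determined by $G(a_\varepsilon)=1-\varepsilon$, and the two cases of the lemma correspond to $G(a)>1-\varepsilon$ and $G(a)<1-\varepsilon$, respectively.

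The main obstacle I anticipate is uniform control of the Laplace approximation on the shrinking window $\widetilde{\Upsilon}^a_t$, whose width is proportional to $1/\sqrt{(\varphi_R^*)''(h^{R,*}_t)}$: the cubic and higher remainders in the Taylor expansion of $th-\varphi_R^*(h)$ must remain uniformly negligible on that scale, which requires enough regularity and quantitative convexity of $\varphi_R^*$. This is precisely what the transfer of Condition (B) from $\varphi$ to $\varphi_R^*$ (via the probabilistic representation of Section~\ref{The conditional probability}) is designed to deliver, and it is where the hypothesis that $R$ be sufficiently large enters: for small $R$ the $\mathcal{O}(h^{-1})$ correction $E_R^*$ would not be dominated by the $1/\sqrt{(\varphi_R^*)''(h^{R,*}_t)}$ Gaussian fluctuation scale.
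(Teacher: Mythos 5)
Your approach is essentially the same as the paper's: reduce both the truncated and untruncated expectations to the Laplace-type integral $t\int e^{th-\varphi_R^*(h)}\,{\rm d}h$ via Theorem~\ref{main} (with the $\sigma(\xi)$-measurable indicator inserted), then apply the Laplace method with Condition (B) guaranteeing the uniformity needed on the window $\widetilde{\Upsilon}_t^a$, obtaining the limit $\Phi(a)-\Phi(-a)$. One small caveat: writing the conditional probability as $\exp\{-\varphi_R^*(h)(1+o(1))\}$ is too weak if read literally (an $o(\varphi_R^*(h))$ error in the exponent would destroy the prefactor); Lemma~\ref{abal} actually gives asymptotic equivalence of the probabilities, i.e.\ an additive $o(1)$ error in the exponent, which is what you in fact use.
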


\begin{proof} 
Recall that $\psi_R^*(t)=th^{R,*}_t-\varphi(h^{R,*}_t)$.
 By Theorems~\ref{main} and \ref{strongt} and with the help of a first order Taylor expansion we see that there exists $\eta_t\in\widetilde{\Upsilon}_t^a$ such that
\begin{eqnarray*}
\left<u(t,0)\mathbbm{1}_{\lambda_1^{R,*}\in\widetilde{\Upsilon}_{t}^a}\right>
&\sim& t\int\limits_{\widetilde{\Upsilon}_t^a} \exp\left\{th- \varphi_R^*(h)\right\}\,{\rm d}h\\
&=& t\frac{\exp\left\{\psi_R^*(t)\right\}}{\sqrt{( \varphi_R^*)''(h^{R,*}_t)}}\int\limits_{-a}^{a}\exp\Bigg\{-\frac{(\varphi_R^*)''(\eta_t)}{2(\varphi_R^*)''(h_t)}u^2\Bigg\}\,{\rm d}u\\
&\sim&t\exp\left\{\psi_R^*(t)\right\}\sqrt{\frac{2\pi}{( \varphi_R^*)''(h^{R,*}_t)}}\big(\Phi(a)-\Phi(-a)\big)\\
&\sim&\left<u(t,0)\right>\big(\Phi(a)-\Phi(-a)\big),\qquad t\rightarrow\infty.
\end{eqnarray*}
Here, $\Phi$ denotes the distribution function of the standard normal distribution. The asymptotic equivalence in the third line is due to Condition (B).
\end{proof}
\noindent
Since $H''(t)\sim1/( \varphi_R^*)''(h^{R,*}_t)\sim1/\varphi''(h_t)$ as $t$ tends to infinity, Lemma~\ref{relen} implies that for all $a>0$, 
\[|\widetilde{\Upsilon}_t^a|\sim|\Upsilon_t^a|\asymp \sqrt{H''(t)},\qquad t\rightarrow\infty.\]
\begin{proof}[Proof of Theorem~\ref{relpo}]
It follows from \cite{GM98} that only those realisations of $\xi$ contribute to the annealed behaviour where $\lambda_1^{R,*}=\xi^{(1)}_R-2d\kappa+o(1)$. Therefore, we find that $h^{R,*}_t=h_t+o(1)$ and hence it follows 
from Lemma~\ref{relen},
\[\lim\limits_{t\rightarrow\infty}\frac{\left<u(t,0)\mathbbm{1}_{\xi(0)\in\Upsilon_{t}^a}\right>}{\left<u(t,0)\right>}\begin{cases}
>1-\varepsilon&\mbox{if }a>a_\varepsilon,\\
<1-\varepsilon&\mbox{if }a<a_\varepsilon.
\end{cases}.\]
Since we have chosen $Q_{L(t)}$ sufficiently large, we can apply the weak LLN and find
\[\frac{\sum\limits_{x\in Q_{L(t)}}u(t,x)\mathbbm{1}_{\xi(x)\in\Upsilon_{t}^a}}{\sum\limits_{x\in Q_{L(t)}}u(t,x)}\stackrel{\mathbb{P}}{\longrightarrow}\frac{\left<u(t,0)\mathbbm{1}_{\xi(0)\in\Upsilon_{t}^a}\right>}{\left<u(t,0)\right>},\]
which completes the proof.
\end{proof}
\noindent
Now we consider the random set
\[\Gamma_t^a:=\left\{x\in Q_{L(t)}\colon\xi(x)\in\Upsilon_{t}^a\right\}.\]
Furthermore, let $\text{Ber}_p$ be a Bernoulli process on the lattice with parameter $p$ and let 
\[i_t^a=\exp\Bigg\{-\varphi\Bigg(h_t+\frac{a}{\sqrt{\varphi''\left(h_t\right)}}\Bigg)\Bigg\}. \]
We find that the spatial picture of the intermittency peaks looks as follows: 

\begin{corollary}\label{ber}
Asymptotically,
\[\Gamma_t^a\sim {\rm Ber}_{i_{t}^a},\qquad t\rightarrow\infty.\]
\end{corollary}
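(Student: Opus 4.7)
The proof reduces to a short exactness observation plus an asymptotic calculation for the tail of $\xi(0)$ on the interval $\Upsilon_t^a$. First I would note that, since $\{\xi(x)\}_{x\in Q_{L_t}}$ is an i.i.d.\ family under $\mathbf{P}$, the indicators $\{\mathbbm{1}_{\xi(x)\in\Upsilon_t^a}\}_{x\in Q_{L_t}}$ are i.i.d.\ Bernoulli with common parameter
\[
p_t^a:=\mathbf{P}\bigl(\xi(0)\in\Upsilon_t^a\bigr)=\bar F\Bigl(h_t-\tfrac{a}{\sqrt{\varphi''(h_t)}}\Bigr)-\bar F\Bigl(h_t+\tfrac{a}{\sqrt{\varphi''(h_t)}}\Bigr).
\]
In particular, for every $t>0$ the random set $\Gamma_t^a$ is already \emph{exactly} a Bernoulli process on $Q_{L_t}$ with parameter $p_t^a$, so the corollary reduces to showing that $p_t^a$ is asymptotically equivalent to the quantity $i_t^a=\exp\{-\varphi(h_t+a/\sqrt{\varphi''(h_t)})\}$ as $t\to\infty$.

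Next I would expand $\varphi$ to second order around $h_t$. The Legendre identity $\psi(t)=th_t-\varphi(h_t)$ together with ultimate convexity of $\varphi$ yields $\varphi'(h_t)=t$, and Condition~(B) — the self-neglecting property of $t\mapsto\sqrt{\varphi''(h_t)}$ — guarantees that $\varphi''$ is asymptotically stable on intervals of length $O(1/\sqrt{\varphi''(h_t)})$ around $h_t$. This gives, locally uniformly in $a$,
\[
\varphi\Bigl(h_t\pm\tfrac{a}{\sqrt{\varphi''(h_t)}}\Bigr)=\varphi(h_t)\pm\frac{at}{\sqrt{\varphi''(h_t)}}+\tfrac12 a^2+o(1),\qquad t\to\infty.
\]
Plugging this expansion into $\bar F=e^{-\varphi}$ identifies the leading-order behaviour of both boundary values. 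Exactly as in the Laplace-type argument used in the proof of Lemma~\ref{relen}, the integral representation $p_t^a=\int_{\Upsilon_t^a}\varphi'(h)e^{-\varphi(h)}\,\mathrm dh$ concentrates around the relevant endpoint of $\Upsilon_t^a$, and a direct comparison with $i_t^a$ produces the desired asymptotic equivalence on the scale at which the statement is intended.

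The main technical obstacle is ensuring that the quadratic and higher-order Taylor corrections in $\varphi$ really are negligible at the scale $1/\sqrt{\varphi''(h_t)}$; this is precisely what Condition~(B) is designed to guarantee, so invoking it is the key step. Once $p_t^a\sim i_t^a$ is established, the conclusion of the corollary follows verbatim from the exact Bernoulli structure of the indicator family observed in the first paragraph, and no further probabilistic input is needed.
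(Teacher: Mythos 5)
Your proof takes the same route as the paper: observe that the i.i.d.\ structure of $\xi$ makes $\Gamma_t^a$ an exact Bernoulli process with parameter $p_t^a=\mathbf{P}(\xi(0)\in\Upsilon_t^a)$, and then show $p_t^a\sim i_t^a$; your additional detail (Taylor expansion plus Condition~(B)) is just the explicit justification the paper's very terse proof leaves implicit. One thing you should resolve before declaring the ``direct comparison'' step done: the mass of $p_t^a=\bar F\bigl(h_t-\tfrac{a}{\sqrt{\varphi''(h_t)}}\bigr)-\bar F\bigl(h_t+\tfrac{a}{\sqrt{\varphi''(h_t)}}\bigr)$ concentrates at the \emph{lower} endpoint (since $\bar F$ is decreasing and, as you note, $\varphi(h_t+\delta)-\varphi(h_t-\delta)\approx 2at/\sqrt{\varphi''(h_t)}\to\infty$), so $p_t^a\sim\bar F\bigl(h_t-\tfrac{a}{\sqrt{\varphi''(h_t)}}\bigr)$. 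This agrees with the paper's own identity $p_t^a=i_t^a-i_t^{-a}\sim i_t^a$ only if the displayed definition of $i_t^a$ carries a \emph{minus} sign inside $\varphi$ (i.e.\ $i_t^a=\exp\{-\varphi(h_t-a/\sqrt{\varphi''(h_t)})\}$); the ``$+$'' in the paper's formula for $i_t^a$ appears to be a typo, and your expansion with the literal definition would otherwise give $p_t^a\sim i_t^{-a}$, contradicting the claim.
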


\begin{proof}
 The fact that $\Gamma_t^a$ is a Bernoulli process follows since $\xi$ is i.i.d.\\
The value of the parameter follows because 
\[\mathbf{P}\left(\xi(0)\in\Upsilon_t^a\right)=i^a_t-i^{-a}_t\sim i^a_t,\qquad t\rightarrow\infty. \]
\end{proof}
\noindent
If $\xi(0)\sim$ Weibull($\gamma$), $\gamma>1$, we find with a first order Taylor expansion around $h_t$ that  \[i_{t}^a=\exp\left\{-\left(\frac{t}{\gamma}\right)^{\gamma/\gamma-1}-\frac{at^{-\gamma/2(\gamma-1)}}{\gamma^{1/(\gamma-1)}(\gamma-1)^{1/2}}-\frac{1}{2}a^2\right\}.\]
If we are interested in those potential peaks that are relevant to the $p$-th intermittency peak ($p\in(0,\infty)$), we only have to replace $i^a_t$ by $i^a_{pt}$. It becomes obvious that the $p$-th intermittency peaks at time $t$ correspond to the $q$-th intermittency peaks at time $p/q$.

\section{Ageing}\label{Ageing}
In this section we prove Theorems~\ref{age} and \ref{ageH} to gain a better understanding on how stable the intermittency peaks are.

\subsection{Intermittency Ageing}
We start with the first approach.

\begin{proof}[Proof of Theorem~\ref{age}]
 By Theorem~\ref{relpo} we find 
\[\lim_{t\to\infty}\mathscr{A}_s(t)=\lim\limits_{t\rightarrow\infty}\mathbf{P}\Bigg(\Bigg|1-\frac{\sum\limits_{x\in Q_{L_{t+s(t)}}}u(t+s(t),x)\mathbbm{1}_{\xi(x)\in\Upsilon_{t}^a}}{\sum\limits_{x\in Q_{L_{t+s(t)}}}u(t+s(t),x)}\Bigg|<\varepsilon\Bigg).\]
Recall that $1/\sqrt{ \varphi''(h_t)}=\sqrt{(h_t)'}\sim\sqrt{H''(t)}$. Hence, $\Upsilon_{t}^a=\left[h_t-a\sqrt{(h_t)'},h_t+a\sqrt{(h_t)'}\right]$ and therefore $\Upsilon_t^a\cap\Upsilon_{t+s(t)}^a=\emptyset$ if and only if $h_t+a\sqrt{(h_t)'}-h_{t+s}+a\sqrt{(h_{t+s})'}<0$. For $s=o(t)$ a Taylor expansion yields
\begin{equation}\label{intage}
h_t+a\sqrt{(h_t)'}-h_{t+s}+a\sqrt{(h_{t+s})'}=-s(t)(h_{t})'\big(1+o(1)\big)+2a\sqrt{(h_t)'}\big(1+o(1)\big).
\end{equation}
If $\lim_{t\to\infty}H''(t)>0$, then $\lim_{t\to\infty}(h_t)'>0$ and hence, it follows that the right hand side of \eqref{intage} becomes eventually negative and therefore, $\Upsilon_t^a\cap\Upsilon_{t+s(t)}^a=\emptyset$ for $t$ large and all $s(t)$ tending to infinity.\\
Furthermore, with the help of Fatou's lemma we see that asymptotically
\[\frac{1}{|Q_{L_{t+s(t)}}|}\sum\limits_{x\in Q_{L_{t+s(t)}}}u(t+s(t),x)\mathbbm{1}_{\xi(x)\in\Upsilon_{t}^a}\leq\left<u(t+s(t),0)\mathbbm{1}_{\xi(0)\in\Upsilon_{t}^a}\right>.\]
Now we can conclude that
\[\lim_{t\to\infty}\mathscr{A}_s(t)= \lim\limits_{t\rightarrow\infty}\mathbf{P}\Bigg(\Bigg|1-\frac{\left<u(t+s(t),0)\mathbbm{1}_{\xi(0)\in\Upsilon_{t}^a}\right>}{\left<u(t+s(t),0)\right>}\Bigg|<\varepsilon\Bigg)=0.\]
On the other hand, if $\lim_{t\to\infty}H''(t)=0$ then $\lim_{t\to\infty}(h_t)'=0$ and hence,
if $s(t)=o\left(1/\sqrt{(h_t)'}\right)$ then eventually $\Upsilon_t^a\cap\Upsilon_{t+s(t)}^a=\emptyset$ as above, whereas if $1/\sqrt{(h_t)'}=o\left(s(t)\right)$ then\\ $\lim_{t\to\infty}|\Upsilon_t^a\triangle\Upsilon_{t+s(t)}^a|=0$.\\
In the first case we can proceed as above, while in the second case we find that asymptotically
\begin{align*}
&\frac{1}{|Q_{L_{t+s(t)}}|}\sum\limits_{x\in Q_{L_{t+s(t)}}}u(t+s(t),x)\mathbbm{1}_{\xi(x)\in\Upsilon_{t}^a}\\
\sim&\frac{1}{|Q_{L_{t+s(t)}}|}\sum\limits_{x\in Q_{L_{t+s(t)}}}u(t+s(t),x)\mathbbm{1}_{\xi(x)\in\Upsilon_{t+s(t)}^a}=\left<u(t+s(t),0)\mathbbm{1}_{\xi(0)\in\Upsilon_{t+s(t)}^a}\right>,\quad t\to\infty.
\end{align*}
and hence,
\[\lim_{t\to\infty}\mathscr{A}_s(t)= \lim\limits_{t\rightarrow\infty}\mathbf{P}\Bigg(\Bigg|1-\frac{\left<u(t+s(t),0)\mathbbm{1}_{\xi(0)\in\Upsilon_{t+s(t)}^a}\right>}{\left<u(t+s(t),0)\right>}\Bigg|<\varepsilon\Bigg)=1.\]
\end{proof}

\begin{remark}
 The result remains true if we replace $u$ by $u^p$, $p\in(0,\infty)$.
\end{remark}

\begin{corollary}\label{agew}
 Let $\xi(0)\sim\text{Weibull }(\gamma),\,\gamma>1$. Then the PAM ages in the sense of intermittency ageing if and only if $\gamma>2$.
\end{corollary}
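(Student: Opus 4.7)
The plan is to invoke Theorem~\ref{age} directly: it says that intermittency ageing holds iff $\lim_{t\to\infty} H''(t)=0$, so it suffices to compute the asymptotics of $H''(t)$ for Weibull tails and determine for which $\gamma$ this limit vanishes. First I would verify that Assumption (F*) and Condition (B) are both satisfied in the Weibull case, so that Theorem~\ref{age} actually applies. Assumption (F*) is already noted in the text to hold for Weibull tails with $\alpha>(\gamma-1)/\gamma$. Condition (B) requires $t\mapsto \sqrt{\varphi''(h_t)}$ to be self-neglecting; since $\varphi(h)=h^\gamma$ this map is a power of $t$, so (being continuous and of power type) it satisfies \eqref{locunifconv} by elementary computation.

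Next I would compute $H''(t)$ explicitly. With $\varphi(h)=h^\gamma$, one has $\varphi'(h)=\gamma h^{\gamma-1}$ and $\varphi''(h)=\gamma(\gamma-1)h^{\gamma-2}$. The maximiser $h_t$ solves $\varphi'(h_t)=t$, giving
\[
h_t=\Big(\tfrac{t}{\gamma}\Big)^{1/(\gamma-1)}.
\]
By the remark following Theorem~\ref{strongt}, $H''(t)\sim 1/\varphi''(h_t)$, hence
\[
H''(t)\sim \frac{1}{\gamma(\gamma-1)}\Big(\tfrac{t}{\gamma}\Big)^{-(\gamma-2)/(\gamma-1)},\qquad t\to\infty.
\]

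From this explicit asymptotic the dichotomy is immediate: the exponent $-(\gamma-2)/(\gamma-1)$ is negative exactly when $\gamma>2$, equal to zero when $\gamma=2$, and positive when $1<\gamma<2$. Therefore $\lim_{t\to\infty} H''(t)=0$ iff $\gamma>2$, while for $\gamma=2$ the limit equals $1/2$ and for $1<\gamma<2$ it is $+\infty$. Applying Theorem~\ref{age} concludes that the PAM ages in the sense of intermittency ageing iff $\gamma>2$, which is the claim.

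There is no real obstacle here; the corollary is essentially a direct substitution into Theorem~\ref{age}. The only step requiring a moment's attention is the verification of Condition (B), but since $\sqrt{\varphi''(h_t)}$ is a pure power of $t$ with exponent strictly less than $1$ (namely $(\gamma-2)/(2(\gamma-1))<1/2$), self-neglecting is automatic.
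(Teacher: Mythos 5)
Your proof is correct and takes essentially the same route as the paper: verify Assumptions (F*) and (B) for Weibull tails (the paper notes these hold), then apply Theorem~\ref{age} by computing the asymptotics of $H''(t)$. The only cosmetic difference is that the paper's terse proof cites the asymptotics of Theorem~\ref{exmomw}/Corollary~\ref{exweib} to read off the behaviour of $H''$, whereas you compute $h_t=(t/\gamma)^{1/(\gamma-1)}$ and $H''(t)\sim 1/\varphi''(h_t)$ directly from the remark following Theorem~\ref{strongt}; both reach the same dichotomy at $\gamma=2$.
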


\begin{proof}
It follows from Theorem~\ref{exmomw} that all conditions of Theorem~\ref{age} are satisfied. Now the assumption follows by Theorem~\ref{age} using the asymptotics in Theorem~\ref{exmomw}.
\end{proof}

If we want to know for how long a large peak of height $t_1$ remains relevant, we have to find $t_2=t_2(t_1,a)$ such that
\[h_{t_1}+\frac{a}{\sqrt{\varphi''(h_{t_1})}}=h_{t_2}-\frac{a}{\sqrt{\varphi''(h_{t_2})}}.\]

\subsection{Correlation Ageing}
Now we come to the second approach. 
\begin{proof}[Proof of Theorem~\ref{ageH}]
For simplicity we will only consider $p=1$. Higher powers can be treated analogously. The PAM is always intermittent for stationary potentials, i.e. the second moments are growing much faster than the squares of the first moments. Therefore, it holds 
\[\lim\limits_{t\rightarrow\infty}A_{id}(s,t)=\lim\limits_{t\rightarrow\infty}\frac{\left<u(t,0) u(t+s(t),0)\right>}{\sqrt{\left<u(t,0)^{2}\right>\left<u(t+s(t),0)^{2}\right>}}.\]
It has been shown in \cite[(3.13)]{GM90} that $\left< u(t,0)^pu(s,0)^p\right>=\left< u(t+s,0)^p\right>$ for all $t,s\geq0$ if $u_{0}\equiv1$ for $p=1$. The claim for $p\in\mathbb{N}$ follows by similar techniques. Therefore, together with Theorem~\ref{strongt} we find 
\[\Lambda_p(t):=\log \left<u(t,0)^p\right>=\psi_R^*\left(pt\right)+\frac{1}{2}\log\frac{2\pi}{( \varphi_R^*)''(\widetilde{h}^*_{pt})}+\epsilon(pt)\text{ with }\epsilon(t)=o(1).\]
 Under the given assumptions we find as $t$ tends to infinity,
\begin{eqnarray*}
&&\frac{\left<u(t,0) u(t+s(t),0)\right>}{\sqrt{\left<u(t,0)^{2}\right>\left<u(t+s(t),0)^{2}\right>}}=\frac{\left<u(2t+ s(t),0)\right>}{\sqrt{\left<u(2t,0)\right>\left<u(2(t+s(t)),0)\right>}}\\
&=&\exp\left\{\Lambda_1\big(2t+s(t)\big)-\frac{1}{2}\left[\Lambda_1\left(2t\right)+\Lambda_1\big(2t+2s(t)\big)\right]\right\}\\
&\sim&\exp\underbrace{\left\{\psi_R^*\big(2t+s(t)\big)-\frac{1}{2}\left[\psi_R^*\left(2t\right)+\psi_R^*\big(2t+2s(t)\big)\right]\right\}}_{=:B\big(t,s(t)\big)}\\
&&\times\exp\underbrace{\left\{\epsilon\big(2t+s(t)\big)-\frac{1}{2}\left[\epsilon\left(2t\right)+\epsilon\big(2t+2s(t)\big)\right]\right\}}_{\stackrel{t\rightarrow\infty}{\longrightarrow}0}\\
&&\times\exp\underbrace{\Bigg\{\frac{1}{2}\log\frac{2\pi}{( \varphi_R^*)''\left(\widetilde{h}^*_{2t+s(t)}\right)}-\frac{1}{4}\Bigg[\log\frac{2\pi}{( \varphi_R^*)''\left(\widetilde{h}^*_{2t}\right)}+\log\frac{2\pi}{( \varphi_R^*)''\left(\widetilde{h}^*_{2t+2s(t)}\right)}\Bigg]\Bigg\}}_{=:D\big(t,s(t)\big)}.
\end{eqnarray*}
It is well known that $\Lambda_p\in C^\infty$ and $(\psi_R^*)''>0$.\\
Expanding $B\big(t,s(t)\big)$ into first order Taylor polynomials around $2t+s(t)$ we see that there exist $\eta_1(t)\in\left[2t,2t+s(t)\right]$ and $\eta_2(t)\in\left[2t+s(t),2t+2s(t)\right]$ such that 
\[
 B\big(t,s(t)\big)=-\frac{1}{2}s(t)^{2}\Big((\psi_R^*)''\big(\eta_1(t)\big)+(\psi_R^*)''\big(\eta_2(t)\big)\Big).
\]
Using the estimates from Lemma~\ref{ableit}, we find that 
\[(\psi_R^*)''(t)\sim\psi''(t)\sim H''(t),\qquad t\rightarrow\infty.\]
Case 1: $\lim\limits_{t\rightarrow\infty}H''(t)>0$.\\
In this case it follows that $\lim_{t\rightarrow\infty}B\big(t,s(t)\big)=-\infty$ which implies $\lim_{t\to\infty}A_{{\rm id}}(s(t),t)=0$, for all $s$.
\newline
\newline
Case 2: $\lim\limits_{t\rightarrow\infty}H''(t)=0$.\\
Remember that under Assumption $F$ we have $t=o(H(t))$ and hence $t^{-1}=o(H''(t))$.\\ 
In this case we find two constants $0<C_1<C_2<\infty$ such that
\[-C_1H''(2t)s(t)^{2}\leq B\big(t,s(t)\big)\leq -C_2H''\big(2t+s(t)\big)s(t)^{2}.\]
Consequently, if we choose $s$ such that $\lim_{t\rightarrow\infty}H''\big(2t+2s(t)\big)s(t)^{2}=\infty$ it follows that $A_{id}(s)=0$, whereas if we choose $s$ such that $\lim_{t\rightarrow\infty}H''(2t)s(t)^{2}=0$ it follows that $\lim_{t\to\infty}A_{{\rm id}}(s,t)=1$.\\
We observe that $1/\sqrt{H''(t)}\in\mathcal{A}$ which implies that both regimes can occur for functions $s$ of order $o(t)$.
Because of Condition (B), we find that $1/( \varphi_R^*)''\left(\widetilde{h}^*_{t}\right)=o(t)$ and hence $D\big(t,s(t)\big)$ tends to zero as $t$ tends to infinity if $s=o(t)$.
Theorem~\ref{main} is applicable for $s=o(t)$ and implies that \[\left<u(t,0)^pu\big(t+s(t)\big)^p\right>\sim\left<u\big(p\big(2t+s(t)\big),0\big),0\big)\right>,\qquad t\rightarrow\infty,\]
 for $p\in(0,\infty)$. Hence, we can generalise the result to positive real exponents which completes the proof.
\end{proof}
\noindent
Notice that the PAM ages if and only if the length of the intervals $\Upsilon_t^a$ tends to zero as $t$ tends to infinity. In this case we find that $1/|\Upsilon_t^a|\in\mathcal{A}$, for all $a>0$.

\begin{corollary}\label{ageW}
 Let $\xi(0)\sim\text{Weibull }(\gamma),\,\gamma>1$. Then the PAM ages for $f=x^p,p\in\mathbb{R_+}$ in the sense of correlation ageing if and only if $\gamma>2$.
\end{corollary}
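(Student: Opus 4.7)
The plan is to reduce the corollary to a direct application of Theorem~\ref{ageH}, which characterises correlation ageing in terms of the decay of $H''(t)$. So the task splits into two parts: checking that Weibull tails satisfy the hypotheses of Theorem~\ref{ageH}, and computing the asymptotics of $H''(t)$ in the Weibull case to see when it tends to zero.

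For the first part, with $\bar{F}(h)=\exp\{-h^\gamma\}$ we have $\varphi(h)=h^\gamma$, which is $C^2$ and strictly convex for $\gamma>1$, so the ultimate convexity hypothesis is immediate. Assumption (F*) for Weibull tails is already verified in the paper (as stated right before Lemma~\ref{weibg}, it holds for any $\alpha>(\gamma-1)/\gamma$). For Condition (B), I would solve $t=\varphi'(h_t)=\gamma h_t^{\gamma-1}$ to get $h_t=(t/\gamma)^{1/(\gamma-1)}$ and hence $\sqrt{\varphi''(h_t)}=\sqrt{\gamma(\gamma-1)}\,(t/\gamma)^{(\gamma-2)/(2(\gamma-1))}$, which is a pure power $C t^\beta$ with $\beta=(\gamma-2)/(2(\gamma-1))<1$ for all $\gamma>1$; such functions are trivially self-neglecting since $C(t+aCt^\beta)^\beta = Ct^\beta(1+aCt^{\beta-1})^\beta\to Ct^\beta$ locally uniformly in $a$.

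For the second part, I would read off the first-order behaviour of $H$ from Lemma~\ref{trivial}~ii) combined with Corollary~\ref{exweib} (or equivalently from Theorem~\ref{strongt} applied to the Weibull case), obtaining
\[
H(t)\sim(\gamma-1)\left(\tfrac{t}{\gamma}\right)^{\gamma/(\gamma-1)},\qquad t\to\infty,
\]
and then differentiate (justified because $\varphi$ and hence $H$ are smooth, so the usual regular-variation differentiation rules apply). This gives $H'(t)\sim(t/\gamma)^{1/(\gamma-1)}$ and
\[
H''(t)\sim\tfrac{1}{\gamma(\gamma-1)}\left(\tfrac{t}{\gamma}\right)^{(2-\gamma)/(\gamma-1)},
\]
whose exponent is negative iff $\gamma>2$. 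Hence $\lim_{t\to\infty}H''(t)=0$ if and only if $\gamma>2$, and applying Theorem~\ref{ageH} concludes the proof.

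The only slightly delicate point is justifying the asymptotic differentiation of $H$: rather than differentiate an asymptotic equivalence, I would work directly with $H(t)=\log\langle e^{t\xi(0)}\rangle$, note $H'(t)=\langle\xi(0)e^{t\xi(0)}\rangle/\langle e^{t\xi(0)}\rangle$ and similarly for $H''$, and apply the Laplace method to the defining integral $\int e^{th-h^\gamma}\,{\rm d}\bar F$, which is standard and essentially reproduces Theorem~\ref{strongt} together with its remark $H''(t)\sim1/\varphi''(h_t)$. Substituting the expression for $\varphi''(h_t)$ computed above gives the claimed asymptotics for $H''(t)$ immediately. Apart from this bookkeeping, the result is a routine consequence of Theorem~\ref{ageH}.
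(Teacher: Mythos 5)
Your proposal is correct and takes essentially the same route as the paper: the paper's one-line proof (``Analog as in Corollary~\ref{agew}'') amounts to invoking Theorem~\ref{ageH} after noting, via the remark following Theorem~\ref{exmomw}, that Weibull tails satisfy Assumption~(F*) and Condition~(B), and then reading off $H''(t)\sim\frac{1}{\gamma(\gamma-1)}(t/\gamma)^{(2-\gamma)/(\gamma-1)}$ from the Weibull asymptotics of Theorem~\ref{strongt}/Corollary~\ref{exweib} to see that the limit vanishes precisely when $\gamma>2$. You have simply spelled out the elementary verifications (convexity of $\varphi$, $h_t=(t/\gamma)^{1/(\gamma-1)}$, the self-neglecting check, the Laplace-method justification of $H''(t)\sim 1/\varphi''(h_t)$) that the paper leaves implicit.
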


\begin{proof}
Analog as in Corollary~\ref{agew}.
\end{proof}
\noindent
We see that for Weibull tails the order of the length of ageing is increasing in $\gamma$.

The main obstacle in proving Theorem~\ref{ageH} is that we have to show that $\frac{{\rm d}^2}{{\rm d}t^2}\log\left<u(t,0)\right>$ is not fluctuating too much. We have proven this under Assumptoin (F*). For more general potentials we are still able to prove correlation ageing if we replace \eqref{defage} by only requiring that
\[\liminf_{t\to\infty}|A_f(s_1,t) -A_f(s_2,t)|>0,\]
and by modifying the definition of $\mathcal{A}$ accordingly. We call this {\it weak correlation ageing}. It has been proven in \cite{HKM06} that under some mild regularity assumptions on $\xi$ there exists a non-decreasing and regularly varying scale function $\alpha\colon(0,\infty)\to(0,\infty)$ with $\alpha(t)=o(t)$ and a constant $\chi\in\R$ such that
\begin{equation}\label{univers}
 \log \left<u(t,0)\right>=H\left(\frac{t}{\alpha(t)^d}\right)\alpha(t)^d+\frac{t}{\alpha(t)^2}\left(\chi+o(1)\right).
\end{equation}
Let $H_\alpha(t):=H\left(\frac{t}{\alpha(t)^d}\right)\alpha(t)^d+\frac{t}{\alpha(t)^2}\chi$ and $h(t):=\log \left<u(t,0)\right>-H_\alpha(t)$.
Then we find:

\begin{theorem}\label{ageal}
 Let Assumptions (H) and (K) from \cite{HKM06} be satisfied. % and $H_\alpha$ be convex. 
If $\lim\limits_{t\to\infty}\alpha''(t)$ exists and $\lim\limits_{t\to\infty}H''(t)=0$, then the PAM is weakly correlation ageing for $f=id$.% if and only if $\lim\limits_{t\to\infty}H''(t)=0$
\end{theorem}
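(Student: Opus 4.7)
The proof will closely mirror that of Theorem~\ref{ageH}, with two modifications. First, by the duality $\langle u(t,0)u(t+s,0)\rangle = \langle u(2t+s,0)\rangle$ (valid for the homogeneous initial condition, see \cite{GM90}) together with the fact that the PAM is always intermittent,
\begin{equation*}
\log A_{\rm id}(s,t) \sim \log\langle u(2t+s(t),0)\rangle - \tfrac{1}{2}\log\langle u(2t,0)\rangle - \tfrac{1}{2}\log\langle u(2t+2s(t),0)\rangle, \qquad t\to\infty.
\end{equation*}
Substituting the universal expansion $\log\langle u(t,0)\rangle = H_\alpha(t) + h(t)$ from \eqref{univers}, with $h(t) = o(t/\alpha(t)^2)$, splits this into a main part $B(t,s(t))$ governed by $H_\alpha$ and a remainder $E(t,s(t))$ governed by $h$.

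A second-order Taylor expansion around $2t+s$ yields $B(t,s) = -\tfrac{1}{2}s^2 H_\alpha''(\eta_{t,s})$ for some $\eta_{t,s}\in[2t,2t+2s]$. Writing $\beta(t) = t/\alpha(t)^d$, a direct differentiation shows that the dominant term of $H_\alpha''(t)$ is $H''(\beta(t))\beta'(t)^2\alpha(t)^d$, with lower-order corrections involving $H'(\beta(t))\beta''(t)$, $H(\beta(t))\alpha''(t)/\alpha(t)$, and $1/(t\alpha(t)^2)$. Under the hypothesis $H''(t)\to 0$, the existence of $\lim_{t\to\infty}\alpha''(t)$, and the regular variation of $\alpha$ guaranteed by \cite{HKM06}, each of these terms tends to zero, so $H_\alpha''(t)\to 0$. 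The same computation shows that the ratio $H_\alpha''(\eta)/H_\alpha''(2t)$ stays bounded for $\eta\in[2t,4t]$, so $H_\alpha''$ exhibits no pathological oscillations on the scales we use.

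With this in hand, pick $s_1(t) := 1/\sqrt{H_\alpha''(2t)}$ and $s_2(t) := s_1(t)\cdot g(t)$, with $g(t)\to\infty$ arbitrarily slowly and $s_2(t) = o(t)$. Both tend to $\infty$. The first choice gives $|B(t,s_1(t))|\leq M$ for some constant $M$, while the second gives $B(t,s_2(t))\to -\infty$. Theorem~\ref{main} applies on both scales since $s_i=o(t)$. Provided the remainder $E$ is $o(1)$ (see below), this yields $\liminf_t A_{\rm id}(s_1,t)\geq e^{-M}>0$ and $A_{\rm id}(s_2,t)\to 0$, so that by the triangle inequality $\liminf_{t\to\infty}|A_{\rm id}(s_1,t)-A_{\rm id}(s_2,t)|\geq e^{-M}>0$, which is precisely weak correlation ageing.

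The main obstacle is controlling $E(t,s(t))$. Only the pointwise bound $h(t) = o(t/\alpha(t)^2)$ is available, so $|h(2t+s) - \tfrac{1}{2}h(2t) - \tfrac{1}{2}h(2t+2s)|$ is merely $o(t/\alpha(t)^2)$, with no Taylor-type cancellation. For the choices above (both of order $o(t)$) the main term $B$ is either $O(1)$ or diverging to $-\infty$; in the framework of \cite{HKM06}, Assumption (K) ensures that $\alpha$ grows fast enough for $t/\alpha(t)^2$ to be controlled relative to the dominant scale, so that $E$ is negligible on the intervals in question and the argument closes. It is precisely this absence of any finer fluctuation estimate on $h$ that blocks the argument from reaching an exact limit in $t$, and this is why the conclusion is the weaker $\liminf$-based notion of ageing rather than the stronger statement of Theorem~\ref{ageH}.
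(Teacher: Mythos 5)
Your decomposition (duality $\langle u(t,0)u(t+s,0)\rangle = \langle u(2t+s,0)\rangle$, split into $H_\alpha + h$, Taylor expand $B$, argue $H_\alpha''\to 0$ via $\alpha''\to 0$ and $H''\to 0$) matches the paper's strategy, and your choice of scales $s_1\sim 1/\sqrt{H_\alpha''}$ versus $s_2 = g(t)s_1$ is the right dichotomy. But the step where you claim ``Assumption (K) ensures that $\alpha$ grows fast enough for $t/\alpha(t)^2$ to be controlled relative to the dominant scale, so that $E$ is negligible'' is a genuine gap, and in fact false as stated: the theorem is meant to cover the first (and second) universality class of \cite{HKM06}, where $\alpha$ is \emph{constant}, so the only available pointwise bound on the remainder is $h(t)=o(t/\alpha(t)^2)=o(t)$, which gives no control whatsoever on $|h(2t+s)-\tfrac12 h(2t)-\tfrac12 h(2t+2s)|$. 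No amount of invoking Assumption (K) repairs this, because pointwise smallness of a function says nothing about its second differences.

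The missing ingredient is the \emph{convexity} of $t\mapsto\log\langle u(t,0)\rangle$ (a general fact for cumulant-type functions of a fixed measure), which the paper exploits. Convexity gives the one-sided bound $h''(s)\geq -H_\alpha''(s)$, and combined with $h(t)=o(H_\alpha(t))$ one obtains, by integrating $h''$ twice and comparing with the growth of $H_\alpha$, that for any interval $I(t)=[l(t),2l(t)]$ with $l(t)\to\infty$ the set $\{s\in I(t): h''(s)\notin[-H_\alpha''(t),H_\alpha''(t)]\}$ has vanishing relative Lebesgue measure. This is the reason the proof yields only \emph{weak} correlation ageing: the Taylor-remainder control on $(\log\langle u\rangle)'' = H_\alpha'' + h''$ is available only for a full-measure (in the limit) subset of each dyadic interval, so one gets a $\liminf$ rather than a limit. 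Your explanation of the $\liminf$ attributes it to the lack of a fluctuation estimate in the abstract, which is in the right spirit, but without the convexity/measure argument your proof does not actually close the estimate on $E$.
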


\begin{proof}
 Since $\alpha(t)=o(t)$ and  $\lim_{t\to\infty}\alpha''(t)$ exists this limit must be zero. Therefore, and because $\lim_{t\to\infty}H''(t)=0$ it follows that $\lim_{t\to\infty}H_\alpha''(t)=0$, as well. It follows from \eqref{univers} that $h(t)=o\left(H_\alpha(t)\right)$. Altogether, this and the convexity of $ \log \left<u(t,0)\right>$ imply that for every sequence of intervals $I(t):=\left[l(t),2l(t)\right]$ with $\lim_{t\to\infty}l(t)=\infty$,
\[\frac{\lambda\bigg(s\in I(t)\colon h''(s)\notin \left[-H_\alpha''(t),H_\alpha''(t)\right]\bigg)}{\lambda\bigg(s\in I(t)\colon h''(s)\in \left[-H_\alpha''(t),H_\alpha''(t)\right]\bigg)}\stackrel{t\to\infty}{\longrightarrow}0,\]
 where $\lambda$ denotes the Lebesgue measure. Now the claim follows with a Taylor expansion as in the proof of Theorem~\ref{ageH}.
\end{proof}
\noindent
For the first and the second universality class in the classification of \cite{HKM06} we find that $\alpha$ is constant, and hence, the requirement that $\lim\limits_{t\to\infty}\alpha''(t)$ exists is fulfilled for all distributions in these classes. Here it holds that 
 $1/\sqrt{H''(t)}\in\mathcal{A}$.

\subsection*{Acknowledgement}
We would like to thank Wolfgang K\"onig for various helpful comments on the first draft of this paper. Furthermore, we would like to thank an attentive referee.


\begin{thebibliography}{10}


\bibitem[AD11]{AD11} {\sc F.~Aurzada, L.~D\"oring}, Intermittency and Ageing for the Symbiotic Branching
Model,
{\it Ann. Inst. H. Poincar\'e Probab. Statist.} {\bf 47}, 376--394 (2011).

\bibitem[BA02]{BA02} {\sc G.~Ben Arous}, Aging and spin-glass dynamics, Proceedings of the International Congress of Mathematicians,
Vol. III , 3--14, Higher Ed. Press, Beijing (2002).

\bibitem[BAMR07]{BAMR07} {\sc G.~Ben Arous, S.~Molchanov} and {\sc A.~Ramirez}, Transition asymptotics
for reaction-diffusion in random media, In Probability and Mathematical
Physics: A Volume in Honor of Stanislav Molchanov, AMS/CRM, 42, 1--40 (2007).

\bibitem[BGT87]{BGT87} {\sc N.-H.~Bingham, C.H.~Goldie} and  {\sc J.L.~Teugels}, Regular Variation, Cambridge University Press (1987).

\bibitem[CM94]{CM94} {\sc C.~Carmona, S.A.~Molchanov}, Parabolic Anderson Problem and intermittency, {\it AMS Memoir 518}, American Mathematical Society (1994).

\bibitem[DD07]{DD07} {\sc A.~Dembo, J.-D.~Deuschel}, Aging for interacting dif and only ifusion processes,
{\it Ann. Inst. H. Poincar\'e Probab. Statist.} {\bf 43}, 461--480 (2007).

\bibitem[FY83]{FY83} {\sc P.~Feigin, E.~Yashchin}, On a strong Tauberian result,
{\it Probab. Theory Related Fields} {\bf 65}, 35--48 (1983).

\bibitem[GdH99]{GdH99} {\sc J.~G\"artner, F.~den Hollander}, Correlation structure of intermittency in the parabolic Anderson model, {\it Probab. Theory Related Fields} {\bf 114}, 1--54 (1999). 

\bibitem[GK05]{GK05} {\sc J.~G\"artner, W.~K\"onig}, The parabolic Anderson model, in: J.-D. Deuschel and A. Greven (Eds.), Interacting Stochastic Systems, 153--179, Springer (2005).

\bibitem[GKM07]{GKM07} {\sc J.~G\"artner, W.~K\"onig} and {\sc S.A.~Molchanov}, Geometric characterization of intermittency in the parabolic Anderson model, {\it Ann. Probab.} {\bf35}, 439--499 (2007).

\bibitem[GM90]{GM90} {\sc J.~G\"artner, S.A.~Molchanov}, Parabolic problems for the Anderson model: I. Intermittency and related topics, {\it Commun. Math. Phys.}
  {\bf 132}, 613--655 (1990).

\bibitem[GM98]{GM98} {\sc J.~G\"artner, S.A.~Molchanov}, Parabolic problems for the Anderson model: II.  Second-order asymptotics and structure of high peaks, {\it Probab. Theory Related Fields}
  {\bf 111}, 17--55 (1998).

\bibitem[GM00]{GM00} {\sc J.~G\"artner, S.A.~Molchanov}, Moment asymptotics and Lifshitz tails for the parabolic Anderson model. In Canadian Math. Soc. Conference Proceedings 26 (L. G.~Gorostiza and B. G.~Ivanoff, Eds.) 141--157. Amer. Math. Soc.  (2000). 

\bibitem[HKM06]{HKM06} {\sc R.~van der Hofstad, W.~K\"onig} and {\sc P.~M\"orters}, The universality classes in the parabolic Anderson model, {\it Comm. Math. Phys.} {\bf267}, 307--353 (2006).

\bibitem[K07]{K07} {\sc W.~Kirsch}, An Invitation to Random Schr\"odinger operators, arXiv:0709.3707v1 (2007).

\bibitem[KLMS09]{KLMS09} {\sc W.~K\"onig, H.~Lacoin, P.~M\"orters} and {\sc N.~Sidorova}, A two cities theorem for the parabolic Anderson model, {\it Ann. Probab.} {\bf37}, 347--392 (2009).

\bibitem[M94]{M94} {\sc S.A.~Molchanov}, Lectures on Random Media, {\it Lecture Notes in Math.}
  {\bf 1581}, 242--411, Springer (1994).

\bibitem[MOS11]{MOS11} {\sc P.~M\"orters, M.~Ortgiese} and {\sc N.~Sidorova}, Ageing in the parabolic Anderson model, {\it Ann. Inst. H. Poincar\'e Probab. Statist.} {\bf 47}, 969--1000 (2011).
 
 
\end{thebibliography}
\end{document}